\def\R{\mathbb{R}}
\def\N{\mathbb{N}}
\def\Z{\mathbb{Z}}
\def\Q{\mathbb{Q}}
\DeclareMathOperator{\Frac}{Frac}
\DeclareMathOperator{\id}{id}
\DeclareMathOperator{\Char}{Char}
\DeclareMathOperator{\supp}{Supp}
\DeclareMathOperator{\suppi}{Supp_\infty}
\newtheorem{theorem}{Theorem}[section]
\newtheorem{lemma}[theorem]{Lemma}
\newtheorem{prop}[theorem]{Proposition}
\newtheorem{cor}[theorem]{Corollary}
\newtheorem{definition}[theorem]{Definition}
\newtheorem{remark}[theorem]{Remark}
\theoremstyle{definition}
\newtheorem{example}[theorem]{Example}
\def\V{\mathcal{V}}
\def\I{\mathcal{I}}
\def\D{\mathcal{D}}
\def\Dk{\widehat{\mathcal{D}}^{(0)}_{\X, k}}
\def\Dkq{\widehat{\mathcal{D}}^{(0)}_{\X, k, \Q}}
\def\Dkp{\widehat{\mathcal{D}}^{(0)}_{\X', k}}
\def\Dkqp{\widehat{\mathcal{D}}^{(0)}_{\X', k, \Q}}
\def\X{\mathfrak{X}}
\def\m{\mathfrak{m}}
\def\Spf{\mathrm{Spf}\,}
\def\Spec{\mathrm{Spec}\,}
\def\O{\mathcal{O}}
\def\sp{\mathrm{sp}}
\def\Nb{\overline{N}_k}
\def\F{\mathcal{F}}
\def\M{\mathcal{M}}
\def\Nn{\mathcal{N}}
\def\L{\mathcal{L}}
\def\Ext{\mathcal{E}xt}
\def\CC{\mathrm{CC}}
\def\Irr{\mathrm{Irr}_v}
\def\Di{\mathcal{D}_{\X, \infty}}
\def\Dip{\mathcal{D}_{\X', \infty}}
\def\Fkr{\mathcal{F}_{\X,k , r}}
\def\Fkrp{\mathcal{F}_{\X',k , r}}
\def\Fi{\F_{\X, \infty}}
\def\Fip{\F_{\X' , \infty}}
\def\Fir{\F_{\X , \infty , r}}
\def\Firp{\F_{\X' , \infty , r}}
\def\Mk{\M_{\X , k}}
\def\Mkp{\M_{\X' , k}}
\def\Mp{\M_{\X'}}
\def\Mkr{\mathcal{M}_{\X , k,r}}
\def\Mr{\mathcal{M}_{\X , \infty , r}}
\def\Mkrp{\mathcal{M}_{\X' ,k,r}}
\def\Mrp{\mathcal{M}_{\X', \infty , r}}
\def\kp{k_{\X'}}
\def\pp{\phi_{\X'' , \X'}}
\def\G{\mathcal{BL}_\X}
\def\zr{\langle \X \rangle}
\def\sp{\mathrm{sp_{\X'}}}
\def\spp{\mathrm{sp_{\X''}}}
\def\Dzr{\D_{\zr}}
\def\Mzr{\M_{\zr}}
\def\Dcap{\wideparen{\D}_{\X_K}}
\def\Mp{\M_{\X'}}
\def\vqp{K\langle \partial \rangle^{(k , r)}}
\def\Arig{\mathbb{A}^{1 , \text{rig}}_K}
\title{Characteristic cycles for coadmissible $\D$-modules on smooth rigid analytic curves}
\author{Raoul Hallopeau}
\date{}
\begin{document}
\maketitle

\begin{abstract}
Let $\mathfrak{X}$ be a formal smooth curve over a complete discrete valuation ring of mixed characteristic and let $\mathfrak{X}_K$ be its generic fiber.
We consider respectively over $\mathfrak{X}$ and $\X_K$ the sheaves of differential operators $\mathcal{D}_{\mathfrak{X}, \infty}$ and $\wideparen{\D}_{\mathfrak{X}_K}$ with a rapid convergence condition.
In this article, we define a characteristic variety as a subset of the cotangent space $T^*\mathfrak{X}_K$ together with a characteristic cycle for coadmissible $\wideparen{\D}_{\mathfrak{X}_K}$-modules.
We deduce a notion of "sub-holonomicity" for coadmissible $\wideparen{\D}_{\mathfrak{X}_K}$-modules which is equivalent to being generically an integrable connection.
When $\mathfrak{X}$ is quasi-compact, we get an Artinian category of sub-holonomic $\wideparen{\D}_{\mathfrak{X}_K}$-modules which are weakly-holonomic.
Moreover, we prove that a coadmissible $\wideparen{\D}_{\mathfrak{X}_K}$-modules is sub-holonomic if and only if the corresponding coadmissible $\mathcal{D}_{\mathfrak{X}, \infty}$-module is.
\end{abstract}

\tableofcontents

\section{Introduction}\label{intro}

Let $\X$ be a smooth formal scheme over a complete discrete valuation ring $\V$ of mixed characteristic.
In \cite{huyghe}, Huyghe-Schmidt-Strauch have introduced a sheaf $\Di = \varprojlim_k \Dkq$ of rapidly converging differential operators obtained by adding congruence levels $k \in \N$ to Berthelot's arithmetic differential operators sheaves.
More precisely, if $U$ is an affine open subset of $\X$ equipped with local coordinates whose derivations are denoted by $\partial_1, \dots, \partial_d$, if $\varpi$ is a uniformizer of $\V$ and if $| \alpha| := \alpha_1 + \dots + \alpha_d \in \N$ for any $\alpha \in \N^d$, then
\[ \Dk(U) = \left\{ \sum_{\alpha \in \N^d} a_\alpha \cdot \partial_1^{\alpha_1} \dots \partial_d^{\alpha_d},~~ a_\alpha \in \O_\X(U) ~,  ~~|a_\alpha| \cdot |\varpi|^{-k | \alpha|} \underset{ | \alpha| \to \infty}{\longrightarrow} 0 \right\} .\]
Let us note $\Dkq := \Dk \otimes_\V K$ with $K$ the fraction ring of $\V$.
The algebras embedding $\widehat{\mathcal{D}}^{(0)}_{\X , k+1 , \Q}(U) \hookrightarrow \Dkq(U)$ induce transition homomorphisms $\widehat{\mathcal{D}}^{(0)}_{\X , k+1 , \Q} \to \Dkq$ for each congruence level $k$.
The sheaf $\Di := \varprojlim_k \Dkq$ is locally a Fr\'echet-Stein $K$-algebra:
\[ \Di(U) =\displaystyle \left\{ \sum_{\alpha \in \N^d} a_\alpha \cdot \partial_1^{\alpha_1} \dots \partial_d^{\alpha_d} :  \,  a_\alpha \in \O_{\X, \Q} (U)~~ \mathrm{st} ~~ \forall \eta>0,~  \lim_{|\alpha| \to \infty} |a_\alpha| \cdot \eta^{|\alpha|} =0 \right\}.\]
In this setting, the relevant $\Di$-modules, playing the role of coherent modules and coming from $p$-adic representations, are the coadmissible $\Di$-modules.
More precisely, a $\Di$-module is said to be coadmissible if this is isomorphic to a projective limit of coherent $\Dkq$-modules $\M_k$ such that $\Dkq \otimes_{\widehat{\mathcal{D}}^{(0)}_{\X, k+1 , \Q}} \M_{k+1} \simeq \M_k$.
The resulting abelian category of coadmissible $\Di$-modules is equivalent to the category of coadmissible $\Dzr$-modules over the Zariski-Riemann $\zr$ space associated with $\X$.
Let us note that Ardakov-Wadsley have developed in \cite{ardakov} a theory of coadmissible $\wideparen{\D}$-modules on smooth rigid analytic spaces.
Their construction coincides with that of Huyghe-Schmidt-Strauch for the generic fiber $\X_K$ of the smooth formal scheme $\X$.

In this article, we introduce a characteristic variety together with a characteristic cycle for coadmissible $\Dzr$-modules and coadmissible $\Dcap$-modules in the case of a smooth formal curve $\X$.
Such a variety is for example an important tool in the study of $\D$-modules over a smooth complex variety $X$, where holonomic $\D$-modules are classically characterized by having a characteristic variety of dimension less than or equal to the dimension of $X$.
However, unlike the complex case, the characteristic variety alone is not sufficient to yield a good notion of holonomicity in rigid analytic geometry.
For instance, one must add a Frobenius structure to Berthelot's arithmetic $\D$-modules to obtain finiteness results.
Moreover, the "holonomic" modules we define in this paper by asking the characteristic variety to be of dimension at most one are not be stable by all the classical six operations, like for example push-forward.
As a consequence, we call these modules sub-holonomic and not holonomic.
Ardakov-Bode-Wadsley introduced in \cite{ABW2} a category of weakly holonomic coadmissible $\wideparen{\D}$-modules using the classical cohomological characterization of holonomic modules.
However, this category is too large, as weakly holonomic $\wideparen{\D}$-modules are not necessarily of finite length.
Additionally, Bode introduced in \cite{bodehol} a notion of holonomicity for coadmissible $\wideparen{\D}$-modules modeled on Caro's construction for arithmetic $\D$-modules, without using characteristic varieties.
Nevertheless, defining a characteristic variety for coadmissible $\wideparen{\D}$-modules remains an important invariant.
On the smooth rigid analytic space $\X_K$, the differential operators considered are not of finite order, so the classical method from complex geometry does not apply.
A natural alternative approach, adopted here, uses microlocalization techniques applied in \cite{hallopeau3} to the sheaf $\Di$.
We restrict ourselves to the one-dimensional case where we obtain a characteristic cycle for coadmissible $\Dcap$-modules.

\vspace{0.4cm}

We now detail the content of this paper.
Assume that $\X$ is a connected, quasi-compact and smooth formal curve over $\V$ and let $\X_K$ be its generic fiber.
We introduced in \cite{hallopeau3} a characteristic variety $\Char(\M)$, using microlocalization sheaves of $\Di$, for any coadmissible $\Di$-module $\M$ which is a closed subset of the cotangent space $T^*\X$ of $\X$.
This characteristic variety satisfies Bernstein's inequality: the module $\M$ is not zero if and only if $\dim(\Char(\M)) \geq 1$.
Section \ref{section1} of this paper recalls the construction of this characteristic variety, as well as basic facts about coadmissible $\Di$-modules and weakly holonomic $\Di$-modules.
A coadmissible $\Di$-module is then called sub-holonomic if its characteristic variety  has dimension less or equal to one.
We also associated to sub-holonomic $\Di$-modules characteristic cycles induced by their characteristic varieties, from which it follows that such modules have finite length.
Furthermore, we also proved that a coadmissible $\Di$-module is sub-holonomic if and only if this is generically an integrable connection.
In section \ref{section2}, we adapt the construction of the microlocalization sheaves of $\Di$ to the case of any admissible blow-up $\phi: \X' \to \X$ of $\X$: $\Dip$ admits microlocaziation sheaves with similar properties.
Then we introduce in section \ref{section3}, exactly as in \cite{hallopeau3}, a characteristic variety for coadmissible $\Dip$-modules.
At the end, we prove that the direct image functor $\phi_*$ induces an equivalence of categories between sub-holonomic $\Dip$-modules and sub-holonomic $\Di$-modules.
We now consider the Zariski-Riemann space of $\X$ which is the projective limit $\zr = \varprojlim \X'$ of all admissible blow-ups $\X'$ of $\X$.
We deduce in section \ref{section5} a notion of sub-holonomicity for coadmissible $\Dzr$-modules over the Zariski-Riemann space $\zr$ for which we have associated finite multiplicities.
Finally, we extend in section \ref{section6} this notion for coadmissible $\Dcap$-modules via the specialization map $\text{sp} : \X_K \to \zr$ and we get the following.

\begin{theorem}
We can associate to any coadmissible $\Dcap$-module $\M$ a characteristic variety $\Char(\M)$ which is a closed subset of $T^*\X_K$ and which satisfies Bernstein's inequality: if $\M \neq 0$, then $\Char(\M)$ is equal to $T^*\X$ or equidimensional of dimension 1.
Moreover, if $\dim(\Char(\M)) \leq 1$, then $\M$ has an associated finite characteristic cycle.
\end{theorem}

A coadmissible $\Dcap$-module is then called sub-holonomic if $\dim(\Char(\M)) \leq 1$.
For example, the coadmissible $\Dcap$-module $\Dcap / P$ with $P \in \Dcap$ is  sub-holonomic if and only if $P$ is a finite differential operator.
Finally, we prove in section \ref{section6} the following properties of sub-holonomic $\Dcap$-modules.

\begin{theorem}\,
\begin{enumerate}
\item
Sub-holonomic $\Dcap$-modules form an Artinian subcategory of the weakly holonomic $\Dcap$-modules of Ardakov-Bode-Wadsley.
Moreover, a coadmissible $\Dcap$-module is sub-holonomic if and only if this is generically an integrable connection.
\item
Let $\text{sp}_\X : \X_K \to \X$ be the specialisation map.
The functor $(\text{sp}_\X)_*$ induces an equivalence of categories between sub-holonomic modules over respectively $\Dcap$ and $\Di$.
\end{enumerate}
\end{theorem}

Let us point out once again that this category of sub-holonomic $\Dcap$-modules is not stable under all six standard cohomological operations, which justifies the use of the term sub-holonomicity.
More precisely, we have explicit examples of direct images of integrable connection modules on a rigid analytic curve that are not even coadmissible.
These are the subject of the article \cite{bitoun} of Bitoun-Bode.
Let $X_K = \text{Sp}(K \langle x \rangle)$, $U_K = X_K \backslash \{0\}$, and $j : U_K \hookrightarrow X_K$ the natural inclusion.
Consider $P_\lambda = x \cdot \partial - \lambda$ for a scalar $\lambda \in K$ with $\partial = \frac{d}{dx}$.
The coadmissible $\wideparen{\D}_{U_K}$-module $\M_\lambda = \wideparen{\D}_{U_K} / P_\lambda \simeq \O_{U_K} \cdot x^\lambda$ is an integrable connection since $x$ is invertible on the open subset $U_K$.
Theorem 1.1 of \cite{bitoun} states that the pushforward module $j_*\M_\lambda$ is coadmissible if and only if $\lambda$ is of positive type.
However, scalars of type zero do exist.

\paragraph{Acknowledgement}
I would like to mention the JSPS summer program, thanks to which I was able to go to Japan in the summer of 2024 to work with Tomoyuki Abe.
I also thank him for all his discussions with me and the explanations in some of his articles on microlocalization.

\paragraph{Notations}

\begin{enumerate}
\item[$\bullet$]
$\V$ is a discrete valuation ring of mixed characteristic $(0,p)$ with maximal ideal $\m$ and with perfect residue field $\kappa$.
Note $\varpi$ a uniformizer of $\V$, $| \cdot |$ the normalised absolute value, ie $|\varpi| = \frac{1}{p}$, and $K =\Frac(\V)$ its fraction ring.
\item[$\bullet$]
$\X$ is a smooth, connected and quai-compact formal curve over $\V$ whose defining ideal is generated by $\varpi$.
Denote by $\X_K$ the rigid analytic $K$-space associated with $\X$ and by $X$ its special fiber which is a smooth, connected and quasi-compact $\kappa$-curve.
\item[$\bullet$]
$U$ is an affine open subset of $\X$ on which we have a local coordinate $y$.
We denote by $\partial$ the corresponding derivation.
\end{enumerate}

\section{Sub-holonomic $\Di$-modules}\label{section1}

We briefly recall in this first section the definition of the sheaf $\Di = \varprojlim_k \Dkq$ of rapidly converging differential operators over the formal smooth curve $\X$ introduced by Huyghe-Schmidt-Strauch in \cite{huyghe} as well as the definition of sub-holonomic $\Di$-modules as done in \cite{hallopeau3}.

\subsection{Microlocalization of $\Di$}

Let $\X$ be a smooth, connected and quasi-compact formal curve over $\V$.
The sheaf $\Dkq$ of differential operators with congruence level $k \in \N$ can be described locally by the following.
For any affine open subset $U$ of $\X$ on which we have an \'etale coordinate with derivation $\partial$, the sections of $\Dkq$ on $U$ are given by
\[ \Dkq(U) = \left\{ \sum_{n \in \N} a_n \cdot (\varpi^k \partial)^n , ~~ a_n \in \O_{\X, \Q}(U) , ~~| a_n |\underset{n \to \infty}{\longrightarrow} 0 \right\} \, .\]
Since the formal curve $\X$ is smooth and connected, the spectral norm $| \cdot |$ of the affinoïd $K$-algebra $\O_{\X , \Q}(U)$ is a non-archimedean absolute value.
The norm of a differential operator $P = \sum_{n \in \N} a_n \cdot (\varpi^k \partial)^n$ of $\Dkq(U)$ is defined by $| P |_k := \max_{n \in \N} \{ |a_n| \}$.
It was proved in \cite[proposition 2.6]{hallopeau1} that this norm $| \cdot |_k$ is multiplicative and complete.
In other words, $(\Dkq(U), |\cdot|_k)$ is a Banach $K$-algebra. Moreover, for any integer $r \in \{ 0 , \dots , k \}$, $\Dkq(U)$ is a sub-algebra of $\widehat{\D}_{\X , r , \Q}^{(0)}(U)$.
In particular, the norm $|\cdot |_r$ is also defined on $\Dkq(U)$ and one can check that $|\cdot |_r \leq |\cdot |_k$.
The order of a differential operator $P= \sum_{n \in \N} a_n \cdot (\varpi^k \partial)^n$ of $\Dkq(U)$ is the integer $\Nb(P) := \max\{ n , ~ |a_n| = |P|_k \} \in \N$.
If $|P|_k = 1$, then $\Nb(P)$ is exactly the order of the finite differential operator $\bar{P} = ( P \mod \varpi)$.
We consider the transition maps $\widehat{\D}_{\X , k+1 , \Q}^{(0)} \hookrightarrow \Dkq$ induced by the local embeddings $\widehat{\D}_{\X , k+1 , \Q}^{(0)}(U) \hookrightarrow \Dkq(U)$.
By definition, $\Di := \varprojlim_k \Dkq$. Sections of this sheaf are locally Fréchet-Stein $K$-algebras with a rapidly converging condition:
\[ \Di(U) =\displaystyle \left\{ \sum_{n \in \N} a_n \cdot \partial^n :  \,  a_n \in \O_{\X, \Q} (U)~~ \mathrm{st} ~~ \forall \eta>0,~  \lim_{n \to \infty} |a_n| \cdot \eta^{n} =0 \right\}.\]
This is an analogue for a smooth formel scheme of the sheaf $\wideparen{\D}$ of Ardakov-Wadsley as introduced in \cite{ardakov}.
For Fr\'echet-Stein algebras, coadmissibility plays the role of coherence. For example, any finitely presented $\Dkq$-module is coadmissible.
\begin{definition}
A $\Di$-module is said to be coadmissible if it is isomorphic to a projective limit of coherent $\Dkq$-modules $\M_k$ such that $\Dkq \otimes_{\widehat{\mathcal{D}}^{(0)}_{\X, k+1 , \Q}} \M_{k+1} \simeq \M_k$.
We denote by $\mathcal{C}_\X$ the abelian category of coadmissible $\Di$-modules.
\end{definition}

Let us recall that a Banach $K$-algebra $A$ endowed with a multiplicative norm $| \cdot |$ is said to be quasi-abelian if there exists a constant $\gamma \in [0,1[$ such that for any pair $(a, b)$ of elements in $A$, $| ab -ba | = | [a,b] | \leq \gamma \cdot |ab| = \gamma \cdot  |a| \cdot |b|$.
Then, for a finite number of quasi-abelian norms $| \cdot |_1 , \dots,  | \cdot |_m$ on $A$ and for a multiplicative subset $S$ of $A$, one can construct a localization $\varphi : A \to B:= A  \langle | \cdot |_1 , \dots,  | \cdot |_m, S \rangle$ of $A$ given by the following universal property.
Let $(D , \|\cdot \|_D)$ be a Banach $K$-algebra together with a morphism $f : A \to D$ such that $f(S) \subset D^\times$. Assume that there exists $c > 0$ such that
\[ \forall (s,a) \in S \times A, ~~ \| f(s)^{-1} \cdot f(a) \|_D \leq c \cdot \max_{1 \leq i \leq m} \left\{ |s|_i^{-1} \cdot |a|_i \right\} . \]
Then there is a unique continuous homomorphism of $K$-algebras $\tilde{f} : B \to D$ such that $\tilde{f} \circ \varphi = f$.
This localization is a Banach $K$-algebra in which elements of $S$ are all invertible.
One can check \cite[appendix]{zab} for more details.
We assume from now on that $k > 0$.
We proved in \cite[proposition 3.5]{hallopeau3} that the Banach $K$-algebra $(\Dkq(U), |\cdot|_r)$ is quasi-abelian for any integer $r \in \{1 , \dots , k \}$:
\[ \forall P, Q \in \Dkq(U), ~ |PQ-QP|_r \leq p^{-r} \cdot |P|_r \cdot |Q|_r . \]
For $k \geq r \geq 1$, the microlocalization sheaf $\Fkr$ introduced in \cite[section 3.3]{hallopeau3} is then defined locally by
\[ (\Fkr(U) , \| \cdot \|_{k, r}) := \Dkq(U) \left\langle |\cdot|_r , \dots ,  |\cdot|_k \, ; \{ \partial^n, ~ n \in \N \} \right\rangle .\]
The action of $\partial^{-1}$ on a section $f$ of $\O_{\X , \Q}(U)$ is given by
\[ \partial^{-1} f = \sum_{n=0}^{+\infty} (-1)^n \cdot \partial^n(f) \cdot \partial^{-(n+1)} \in \F_{k , r}(U) .\]
The following result corresponds to \cite[proposition 3.16]{hallopeau3}.

\begin{prop}\label{prop1.1}
Any element $S$ of $\Fkr(U)$ can be uniquely written in the form
\[ S = \sum_{n = 0}^\infty a_n \cdot (\varpi^k\partial)^n + \sum_{ n =1}^\infty  a_{-n} \cdot (\varpi^r\partial)^{-n} \]
with $a_n \to 0$ when $n \to \pm \infty$. Moreover, $\| S \|_{k , r} = \max_{n \in \Z} \{ |a_n| \}$.
\end{prop}

For $k \geq r \geq 1$, we consider the transition homomorphism of sheaves $\F_{\X, k+1 , r} \hookrightarrow  \Fkr$ induced by the local embeddings $\F_{\X, k+1 , r}(U) \hookrightarrow \Fkr(U)$ of $K$-algebras. 
Let us define the sheaf $\Fir := \varprojlim_{k \geq r} \Fkr$. Locally, $\Fir(U)$ is a Fréchet-Stein algebra given by
\[ \Fir(U) =\displaystyle \left\{ P + \sum_{n = 1}^\infty  a_{-n} \cdot (\varpi^r\partial)^{-n} :  \, P \in \Di(U) , ~ a_{-n} \in \O_{\X, \Q} (U),~~  \lim_{n \to \infty} |a_{-n}|  =0 \right\} .\]
We proved in \cite[theorem 4.7]{hallopeau3} the following.
\begin{theorem}
Let $P \in \Di(U)$ and $V \subset U$ an open subset. Then $P$ is invertible in $\Fir(V)$ if and only if
\begin{enumerate}
\item
$P = \sum_{n=0}^d a_n \cdot \partial^n$ is finite of some order $d$ in $\Di(U)$ ,
\item
$a_d$ is invertible in $\O_{\X , \Q}(V)$,
\item
$\forall n\in \{ 0 , \dots , d-1 \}, ~~  |a_n|  < |a_d| \cdot p^{r(d-n)}$.
\end{enumerate}
\end{theorem}

For integers $k > r \geq 1$, we have injective maps of $K$-algebras $\Fkr(U) \hookrightarrow \F_{\X , k , r+1}(U)$ inducing a sheaf homomorphism $\Fkr \hookrightarrow \F_{\X , k , r+1}$ such that the following diagram commutes:
\[ \xymatrix@R=3pc @C=3pc{ \F_{k+1 , r} \ar@{^{(}->}[r]\ar@{^{(}->}[d] & \F_{k , r} \ar@{^{(}->}[d] \\ \F_{k+1 , r+1} \ar@{^{(}->}[r] & \F_{k , r+1} .} \]
Passing to the projective limit over $k$, we get an injective sheaf homomorphism $\Fir \to \F_{\X , \infty , r+1}$.
The microlocalization sheaf $\Fi$ of $\Di$ is then the injective limit of the sheaves $\Fir$, ie
\[ \Fi := \varinjlim_{r \geq 1} \Fir .\]

\subsection{Sub-holonomic $\Di$-modules}\label{section1.2}

Let $\M_\X = \varprojlim_k \Mk$ be a coadmissible $\Di$-module. It means that for any congruence level $k \in \N$, the $\Dkq$-module  $\Mk$ is coherent and the $\widehat{\D}^{(0)}_{\X , k+1 , \Q}$-linear transition map $\M_{\X , k+1} \to \Mk$ induce a $\Dkq$-linear isomorphism $\Mk \simeq \Dkq \otimes_{\widehat{\D}^{(0)}_{\X , k+1 , \Q}} \M_{\X , k+1}$.
We introduce the coherent $\Fkr$-module $\Mkr := \Fkr \otimes_{\Dkq} \Mk$ for any integers $k \geq r \geq 1$. There are transition homomorphisms $\M_{\X, k+1 , r} \to \Mkr$ induced by $\M_{\X , k+1} \to \Mk$.
We have proved in \cite[section 5.2]{hallopeau3} that $\Mr := \varprojlim_{k \geq r} \Mkr$ is a coadmissible $\Fir$-module.
As a consequence, the sequence of supports $(\supp \Mkr)_k$ is increasing and we set $\supp (\Mr) := \overline{ \bigcup_{k \geq r} (\supp \Mkr)} \subset \X$.
It is worth noting that the usual support of a coadmissible module is not necessarily closed.
Moreover, one has $\supp (\M_{\X , \infty , r+1}) \subset \supp (\Mr)$ and we introduce the following "infinite" support of $\M_\X$:
\[ \suppi (\M_\X) := \bigcap_{r \geq 1} \supp (\Mr) \subset \X . \]
Since the formal curve $\X$ is Noetherian, there exists an integer $r_0 \geq 1$ such that for any $r \geq r_0$, $\suppi (\M_\X) = \supp(\Mr)$. 
Moreover, when $\dim(\suppi (\M_\X)) = 0$, there exists a congruence level $k_0 \geq k$ such that for any $k \geq k_0$,
\[ \suppi (\M_\X) = \supp(\Mr) = \supp(\Mkr) .\]
We have introduced in \cite[section 5.3]{hallopeau3} a characteristic variety $\Char(\M_\X)$ for any coadmissible $\Di$-module $\M_\X$ which is a closed subset of the cotangent space $T^*\X$ of $\X$.
It satisfies the Bernstein's inequality: the module $\M_\X$ is non-zero if and only if the irreducible components of $\Char(\M_\X)$ all have dimension at least one.

\begin{definition}
A coadmissible $\Di$-module $\M_\X$ is called sub-holonomic if $\dim \Char(\M_\X) \leq 1$, or equivalently if $\dim \suppi(\M_\X) = 0$.
We denote by $\mathcal{SH}_\X$ the abelian category of sub-holonomic $\Di$-modules.
\end{definition}

Sub-holonomic coadmissible $\Di$-modules are weakly holonomic according to the definition of Ardakov-Bode-Wadsley \cite{ABW2}.
We briefly recall it.
Let $\M_\X = \varprojlim_{k \geq 0} \Mk$ be a coadmissible $\Di$-module.
For any $n \in \N$, the right $\Di$-module $\Ext^n_{\Di}(\M_\X , \Di) := \varprojlim_{k \geq 1} \Ext^n_{\Dkq}(\Mk , \Dkq)$ is coadmissible.
The module $\M_\X$ is then said to be weakly holonomic if for all integer $n \neq \dim \X = 1$, $\Ext^n_{\Di}(\M_\X , \Di) = 0$.
This is equivalent to ask all the coherent $\Dkq$-modules $\Mk$ to be holonomic in the sense that all the characteristic varieties $\Char(\M_k) \subset T^*X$ have dimension at most one.
If the coadmissible $\Di$-module $\M_\X$ is non-zero and sub-holonomic, then its characteristic variety $\Char(\M_\X)$ has dimension one.
More precisely, this variety is composed of a finite number of vertical irreducible components and an horizontal irreducible component given by the zero-section of the canonical projection $\pi_\X : T^*\X \to \X$.
See for example the following figure \ref{figurechar}.
In fact, these vertical components are exactly the vertical lines passing through the points of the infinite support $\suppi (\M_\X)$ of $\M_\X$.
We now relate sub-holonomicity to integrable connections.

\begin{prop}\label{propconnections}
Let $\M$ be an integrable connection in the sense that $\M$ is a coherent $\O_{\X , \Q}$-module together with an integrable connection $\nabla$.
Then $\M$ is a coadmissible $\Di$-module.
In other words, integrable connections are exactly the coadmissible $\Di$-modules which are also $\O_{\X , \Q}$-coherent.
\end{prop}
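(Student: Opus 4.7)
The plan is to work locally on an affine open subset $U \subset \X$ equipped with a coordinate $y$ and associated derivation $\partial$. Since $\M$ is $\O_{\X,\Q}$-coherent and $\X$ is a smooth curve, $\M(U)$ is a finitely generated module over the affinoid algebra $\O_{\X,\Q}(U)$; choosing any finite presentation endows it with a canonical Banach $K$-module topology (unique up to equivalence). The connection provides a continuous $K$-linear operator $\nabla_\partial : \M(U) \to \M(U)$, so there exists a constant $A>0$ (depending on $U$) such that $\| \nabla_\partial^n(m) \| \leq A^n \| m \|$ for all $n \geq 0$ and $m \in \M(U)$. This exponential bound is the key estimate driving the rest of the argument.

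First I would extend the natural action of finite-order differential operators to a $\Di$-action. For $P = \sum_{n \geq 0} a_n \partial^n \in \Di(U)$, the rapid-decay condition $|a_n| \eta^n \to 0$ for every $\eta > 0$, applied with $\eta = A$, forces $|a_n| A^n \to 0$, so the series $\sum_n a_n \nabla_\partial^n(m)$ converges in $\M(U)$; associativity and unitality follow from continuity together with the density of finite sums inside $\Di(U)$. Next, choose a congruence level $k_0$ large enough that $p^{k_0} \geq A$ holds uniformly on a finite affine cover of $\X$ (available by quasi-compactness). For $k \geq k_0$ and any $Q = \sum_n a_n \varpi^{kn} \partial^n \in \Dkq(U)$ with $|a_n| \to 0$, the estimate $\| a_n \varpi^{kn} \nabla_\partial^n(m) \| \leq |a_n| (A / p^k)^n \| m \| \leq |a_n| \cdot \| m \|$ yields convergence, so $\M$ is naturally a $\Dkq$-module, and it is coherent over $\Dkq$ because the $\O_{\X,\Q}$-generators of $\M$ remain $\Dkq$-generators.

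For coadmissibility, set $\M_k := \M$ as a $\Dkq$-module for $k \geq k_0$, and $\M_k := \Dkq \otimes_{\widehat{\D}^{(0)}_{\X , k_0 , \Q}} \M$ for $k < k_0$; the latter is coherent over $\Dkq$ by flatness of the transition maps between the Noetherian Banach algebras $\widehat{\D}^{(0)}_{\X , k , \Q}$. The coadmissibility isomorphism $\M_k \simeq \Dkq \otimes_{\widehat{\D}^{(0)}_{\X , k+1 , \Q}} \M_{k+1}$ reduces, for $k \geq k_0$, to checking that the surjection $P \otimes m \mapsto P \cdot m$ from $\Dkq \otimes_{\widehat{\D}^{(0)}_{\X , k+1 , \Q}} \M$ to $\M$ is injective, which I would obtain from the fact that $\M$ is $\O_{\X,\Q}$-coherent and carries the same underlying $\O_{\X,\Q}$ and $\partial$ actions on both sides: any element $P \otimes m$ can be absorbed as $1 \otimes (P m)$ by expressing $P m$ in terms of a fixed finite $\O_{\X,\Q}$-generating family, transferring the scalar across the tensor product. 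The case $k < k_0$ then follows by transitivity of base change, and $\varprojlim_k \M_k = \M$ because the system is eventually constant. The converse direction in the ``in other words'' statement is immediate: restricting a $\Di$-action on an $\O_{\X,\Q}$-coherent module to $\O_{\X,\Q}$ and $\partial$ recovers an integrable connection.

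The main obstacle I anticipate is the rigorous verification of the transition isomorphism, since purely algebraic base change along a flat extension of rings does not automatically return the original module even when compatible actions exist on both sides. The resolution must combine the coherence of $\M$ over the shared subring $\O_{\X,\Q}$, the exponential operator bound on $\nabla_\partial$ (which ensures continuity of the $\Dkq$-action in the Banach topology of $\M(U)$), and the density of finite-order operators inside $\Dkq$, in order to justify absorbing infinite-order operators across the tensor product and conclude that the natural map is an isomorphism.
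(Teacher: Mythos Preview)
Your approach is essentially the same as the paper's: both establish an exponential bound $\|\nabla_\partial^n\| \leq A^n$ on iterates of the connection operator and use it to show that for $k$ large enough the $\Dkq$-action converges, making $\M$ a coherent $\Dkq$-module. The paper reaches this bound via a more concrete route---using that $\O_{\X,\Q}(\X)$ is a PID in dimension one to choose a \emph{basis} $m_1,\dots,m_s$, writing $\partial(\underline{m})=A\cdot\underline{m}$ for a matrix $A$, and bounding $|S_n(A)|_\infty\le |A|_\infty^n$---whereas you invoke the canonical Banach topology and operator-norm continuity of $\nabla_\partial$. These are equivalent; your packaging has the small advantage of not requiring a separate basis-independence check (which the paper does carry out), while the paper's matrix computation is more explicit.

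Where you work harder than necessary is the coadmissibility step. The paper simply observes that once $\M$ is a coherent $\widehat{\D}^{(0)}_{\X,k_0,\Q}$-module, it is automatically coadmissible over $\Di$: this is a general property of Fr\'echet--Stein algebras (any finitely presented module over one of the Banach pieces extends canonically to a coadmissible module), recalled earlier in the paper. You can therefore bypass the explicit construction of the system $(\M_k)_k$ and the delicate verification of the transition isomorphism that you flag as an obstacle---it is subsumed in the cited general fact, and your concern, while legitimate in isolation, dissolves once you quote it.
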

\begin{proof}
We can assume that $\X$ is affine.
Since $\dim(\X) = 1$, $\O_{\X , \Q}(\X)$ is a principal ideal domain. As a consequence, $\M$ is free of finite rank over $\O_{\X , \Q}$.
Let $m_1, \dots , m_s$ be a basis for $\M$ composed of global sections.
The connection $\nabla$ induces an action of $\partial$ on $\M$ satisfying $\partial(f\cdot m) = \partial(f) \cdot m + f \cdot \partial(m)$ for any $f \in \O_{\X , \Q}$ and $m \in \M$.
In particular, $\partial$ is determined by some matrix $A \in \text{M}_{s\times s}(\O_{\X , \Q}(\X))$: $\partial(\underline{m}) = A \cdot \underline{m}$ where $\underline{m} = (m_1 , \dots , m_s)$.
We now check that this action of $\partial$ on $\M$ extends to a left $\Dkq$-module structure for $k$ sufficiently large, ie $P(m_i)$ is well defined inside $\M$ for any differential operator $P \in \Dkq(\X)$ and any integer $i \in \{ 1 , \dots , s\}$.
We denote by $(P_n)_n$ the sequence of partial sums of $P$.
We have $\partial^2(\underline{m}) = \partial(A \cdot \underline{m} ) = \partial(A) \cdot \underline{m} + A \cdot \partial(\underline{m}) = (\partial(A) + A) \cdot \underline{m}$.
More generally, we prove by recursion that $\partial^n(\underline{m}) = S_n(A) \cdot \underline{m}$ for some polynomial $S_n(A) \in \Z[ A , \partial(A) , \dots \partial^n(A)]_{\leq n}$ of degree at most $n$ without constant term.
Let $| \cdot |_\infty$ be the supremum norm over respectively $\text{M}_{s\times s}(\O_{\X , \Q}(\X))$ and $\M^s = \oplus_{i = 1}^s \O_{\X , \Q}(\X) \cdot m_i$.
All the norms are equivalent on $\M^s$ and on $\text{M}_{s\times s}(\O_{\X , \Q}(\X))$ seen as $\O_{\X , \Q}(\X)$-module.
Thus, it suffices to check the convergence for the supremum norm.
Since $|\partial^\ell(A)|_\infty \leq |A|_\infty$ for all $\ell \in \N$, we have $|S_n(A)|_\infty \leq |A|_\infty^n$.
Let $k_0 \in \Z$ be such that $|A|_\infty = |\varpi|^{k_0}$.
It follows that for any positive integer $k \geq - k_0$, $|\varpi|^{n k} \cdot |\partial^n(A)|_\infty \leq 1$.
As a consequence, $| (P_{n + \ell} - P_n)(\underline{m})|_\infty \leq |P_{n + \ell} - P_n|_k$ and $(P_n(\underline{m}))_n$ is a Cauchy sequence in $(\M^s , | \cdot |_\infty)$.
Let us denote its limit by $P(\underline{m}) = ( P(m_1) , \dots , P(m_s))$.
This proves that $\M$ is a $\Dkq$-modules for the structure coming from the connection as soon as $k \geq k_0$ (let us point out the fact that $k_0$ is determined by the choice of $m_1 , \dots , m_s$).
It remains to check that this left $\Dkq$-module structure is independent from the choice of the basis of $\M$.
Let $m'_1 , \dots , m'_s$ be another basis. Then $\underline{m}' = B\cdot \underline{m}$ for some invertible matrix $B \in \text{M}_{s\times s}(\O_{\X , \Q}(\X))$.
The map $\phi : \M \simeq \bigoplus_{i = 1}^s \O_{\X , \Q}\cdot m_i \to \M \simeq \bigoplus_{i = 1}^s \O_{\X , \Q}\cdot m_i'$ induced by $\underline{m} \mapsto B^{-1}\cdot \underline{m}'$ is an isomorphism of $\O_{\X , \Q}$-modules.
We have $\partial(\underline{m}') = \partial(B \cdot \underline{m}) =(\partial(B) + A) \cdot \underline{m}$ and
\[ \partial(B^{-1}\cdot \underline{m}') = \partial(B^{-1}) \cdot \underline{m}' + B^{-1} \cdot \partial(\underline{m}')) = (\partial(B^{-1})B  + B^{-1}\partial(B) + A)\cdot \underline{m} .\]
Since $\partial(B^{-1})B  + B^{-1}\partial(B) = \partial(B^{-1} B) = \partial(1) = 0$, we get $\partial(B^{-1}\cdot \underline{m}') = \partial(\underline{m}) = A \cdot \underline{m}$.
By linearity, it follows that $\phi(\partial \cdot m) = \partial \cdot \phi(m)$ for any $m \in \M$.
In other words, the map $\phi$ is $\partial$-linear and thus $\D_{\X , \Q}$-linear.
Finally, we check by arguments similar to the preceding ones that this extends to $\Dkq$ for $k$ large enough (depending on the norm of $B$).
To resume, we have proved that the connection $\M$ is naturally a $\Dkq$-module for $k$ large enough.
Since $\M$ is $\O_{\X , \Q}$-coherent, this is also coherent as $\Dkq$-module.
It follows that $\M$ is a coadmissible $\Di$-module.
\end{proof}

\begin{remark}
This proposition remains true in higher dimension: any coherent $\O_{\X , \Q}$-module equipped with a connection is a coadmissible $\Di$-module for the structure coming from the connection.
The proof is globally the same, replacing a basis of $\M$ by a system of generators.
This result was also proved by Ardakov-Wadsley in \cite{ardakov2} for coadmissible $\wideparen{\D}$-modules.
\end{remark}

From now on, we identify integrable connections with $\O_{\X, \Q}$-coherent coadmissible $\Di$-modules.
A coadmissible $\Di$-module $\M_\X$ is sub-holonomic if and only if there exists a dense open subset $U$ of $\X$ such that $\M_{|U}$ is an integrable connection.
One can choose for $U$ the open subset $\X \backslash \suppi(\M_\X)$.
It was also proved in \cite[section 6.1]{hallopeau3} that a coadmissible $\Di$-module $\M_\X$ is an integrable connection if and only if $\Char(\M_\X) \subset \X$ (identified with the zero-section of $T^*\X$).
Moreover, because $\X$ has dimension one, a connection is automatically locally a free $\O_{\X , \Q}$-module of finite rank.

\begin{example}
Assume that $\X$ is affine with a local coordinate.
\begin{enumerate}
\item
Let $P\in \Di(\X)$. The coadmissible module $\Di/ P$ is sub-holonomic if and only if $P = \sum_{n = 0}^d a_n \cdot \partial^n$ is a finite order differential operator.
The characteristic variety $\Char(\Di / P)$ is then given in figure \ref{figurechar}, where $x_1, \dots, x_s$ denote the zeroes in $\X$ of the dominant coefficient $a_d$ of $P$.
\item
A $\Di$-module $\M$ is said to be algebraic is there exists a coherent $\D_{\X , \Q}^{(0)}$-module $\M^{alg}$ such that $\M \simeq \Di \otimes_{\D_{\X , \Q}^{(0)}} \M^{alg}$.
Then any locally algebraic $\Di$-module which is weakly holonomic is sub-holonomic.
\end{enumerate}
\end{example}

\begin{remark}
The category of sub-holonomic $\Di$-modules contains algebraic $\Di$-modules which are weakly holonomic.
We conjecture the converse: sub-holonomic $\Di$-modules are exactly the weakly holonomic $\Di$-modules that are locally algebraic.
\end{remark}

Let us mention that this category of coadmissible sub-holonomic $\Di$-modules does not satisfy all the six cohomological operations.
Since the direct image of a coadmissible $\wideparen{\D}$-module, or worse, an integrable connection, is not necessarily coadmissible, we have explicit examples of direct images of sub-holonomic $\wideparen{\D}$-modules that are not even coadmissible.
Such examples are the subject of Bitoun-Bode's article \cite{bitoun}.

\begin{example}
Let $X_K = \mathrm{Sp}(K \langle x \rangle)$ be the rigid analytic unit disc over $K$.
We consider the open subset $U_K = X_K \backslash \{0\}$ of $X_K$ obtained by removing the origin.
Let us denote by $\partial$ the derivation associated with $x$ and consider the differential operator $P_\lambda = x\cdot\partial - \lambda \in \wideparen{\D}_{X_K}(X_K)$ with $\lambda \in K$.
The $\wideparen{\D}_{U_K}$-module $\M_\lambda = \wideparen{\D}_{U_K} / P_\lambda \simeq \O_{U_K} \cdot x^\lambda$ is coadmissible.
In fact, this is an integrable connection over $U_K$ where $x$ is invertible.
If $j : U_K \hookrightarrow X_K$ is the natural embedding, then \cite[theorem 1.1]{bitoun} tells us that the $\wideparen{\D}_{X_K}$-module $j_*\M_{U_K}$ is not coadmissible when $\lambda$ is not of positive type.
There are scalars $\lambda$ which are not of positive type.
The following example is due to Le Bras.
Let us note $k_1 = p$ and $k_{n+1} = p^{2k_n}$ for $n > 1$.
The element $\lambda := \sum_{i=1}^\infty p^{k_i}$ of $\Q_p$ is of type zero.
\end{example}

\begin{figure}\label{fig1}
\begin{center}
\caption{$\Char(\Di /P)$ for $P = \sum_{n = 0}^d a_n \cdot \partial^n$}
\begin{tikzpicture}\label{figurechar}

\draw[thick][->] (-2,0) -- (8,0);
\draw (8.2,0) node[right] {$x$};
\draw [thick][->] (0,-1) -- (0,3.5);
\draw (0,3.7) node[above] {$\xi$};
\draw (0,0) node[below right] {$0$};

\draw[red][thick] (-1.5,0) -- (7,0);

\draw[red][thick] (-1, -1) -- (-1,3) ;
\draw (-1,0) node[below right]{$x_1$} ;

\draw[red][thick] (1.8, -1) -- (1.8,3) ;
\draw (1.8,0) node[below right]{$x_2$} ;

\draw[red][thick] (3.5 , -1) -- (3.5,3) ;
\draw (3.5,0) node[below right]{$x_i$} ;

\draw[red][thick] (6, -1) -- (6,3) ;
\draw (6,0) node[below right]{$x_s$} ;

\draw (7,3) node{$T^*\X$} ;

\end{tikzpicture}
\end{center}
\end{figure}

For this reason, the coadmissible $\Di$-modules whose characteristic varieties are of minimal dimension are only called sub-holonomic and not holonomic.
Nevertheless, we have associated with these coadmissible $\Di$-modules a finite characteristic cycle in \cite[section 6.2]{hallopeau3}.
In particular, sub-holonomic modules are of finite length. We end this section by quickly summarizing this.
Let $\M_\X = \varprojlim_k \Mk$ be a non-zero sub-holonomic $\Di$-module and $U = \X \backslash \suppi(\M_\X)$.
We define the horizontal multiplicity $m_0(\M_\X)$ of $\M_\X$ to be the rank of the locally free $\O_{U , \Q}$-module $(\M_\X)_{|U}$.
In particular, the zero section of $T^*\X$ is an irreducible component of the characteristic variety $\Char(\M_\X)$ if and only if $m_0(\M_\X) \geq 1$.
For a short exact sequence of sub-holonomic $\Di$-modules $0 \to \Nn_\X \to \M_\X \to \L_\X \to 0$, we have $m_0(\M_\X) = m_0(\Nn_\X) + m_0(\L_\X)$.
Finally, we denote by $\vqp$ the commutative sub-algebra of $\F_{k , r}(U)$ composed of the elements with coefficients in $K$ :
\[ \vqp := \left\{ \sum_{n = 0}^\infty \lambda_n \cdot (\varpi^k\partial)^n + \sum_{ n =1}^\infty  \lambda_{-n} \cdot (\varpi^r\partial)^{-n} \in \F_{k , r}(U)~: ~ \lambda_n \in K, ~ \lim_{n \to \pm \infty} \lambda_n = 0  \right\} . \]
We proved in \cite[section 6.2]{hallopeau3}, using the arguments of \cite[section 1.3]{adriano}, that for any point $x \in \suppi(\M_\X)$, the modules $(\Mkr)_x$ are free of the same finite rank over $\vqp$ as soon as $k \geq r \geq 1$ are large enough.
We define the multiplicity $m_x(\M_\X) = m_C(\M_\X)$ of the vertical component $C$ of $\Char(\M_\X)$ passing though $x$ to be this common rank.
We denote by $\Irr(\M_\X)$ the set of vertical irreducible components of the characteristic variety $\Char(\M_\X)$.
The characteristic cycle of $\M_\X$ is then given by the finite formal sum
\[ \CC(\M_\X) :=m_0(\M_\X)\cdot \X + \sum_{C \in \Irr(\M_\X)} m_C(\M_\X) \cdot C \]
where we always identify $\X$ with the zero section of its cotangent space $T^*\X$.
If $\M_\X = 0$, then all the multiplicities of $\M_\X$ are equal to zero and $\CC(\M) = 0$.
The following result is demonstrated in \cite[section 6.2]{hallopeau3}.

\begin{prop}
Let $0 \to \Nn_\X \to \M_\X \to \L_\X \to 0$ be an exact sequence of sub-holonomic coadmissible $\Di$-modules.
Then $\CC(\M_\X) = \CC(\Nn_\X) + \CC(\L_\X)$. Moreover, a sub-holonomic coadmissible $\Di$-module $\M_\X$ is zero if and only if $\CC(\M_\X) = 0$.
As a consequence, any sub-holonomic coadmissible $\Di$-module $\M_\X$ is of finite length less or equal to $\ell(\M_\X) := m_0(\M_\X) + \sum_{x \in \suppi(\M_\X)} m_x(\M_\X) \in \N$.
\end{prop}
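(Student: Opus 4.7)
The plan is to prove the three assertions in sequence: additivity of $\CC$, the vanishing criterion, and the length bound; each relies on the previous one.

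\textbf{Step 1 (Additivity).} I must show that both multiplicity functions appearing in $\CC$ are additive on the given short exact sequence. For the horizontal part, since the support is additive on short exact sequences of coherent modules, this persists through microlocalization and through the two limits defining $\suppi$, yielding $\suppi(\M_\X) = \suppi(\Nn_\X) \cup \suppi(\L_\X)$. Restricting the exact sequence to the dense open $U = \X \setminus \suppi(\M_\X)$ produces a short exact sequence of integrable connections, which by Proposition~\ref{propconnections} (and the PID structure of $\O_{U,\Q}$) is a short exact sequence of finite rank free $\O_{U,\Q}$-modules; additivity of rank then gives $m_0(\M_\X) = m_0(\Nn_\X) + m_0(\L_\X)$. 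For the vertical part at a fixed $x \in \suppi(\M_\X)$, I use that a short exact sequence of coadmissible $\Di$-modules descends to a short exact sequence of coherent $\Dkq$-modules at every finite level $k$; applying the flat microlocalization functor $\Fkr \otimes_{\Dkq}(-)$ and then localizing at $x$, one obtains a short exact sequence of stalks. For $k \geq r \geq 1$ sufficiently large, all three stalks are free $\vqp$-modules whose ranks are precisely the respective multiplicities $m_x$, so additivity of rank over $\vqp$ produces the desired identity for every $x$.

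\textbf{Step 2 (Vanishing).} The direction $\M_\X = 0 \Rightarrow \CC(\M_\X) = 0$ is immediate. For the converse, suppose $\M_\X$ is a sub-holonomic coadmissible $\Di$-module with $\CC(\M_\X) = 0$. Then $m_x(\M_\X) = 0$ for every $x \in \suppi(\M_\X)$; however, at such an $x$ the stalk $(\Mkr)_x$ is nonzero by definition of the infinite support, yet would have to be free of rank zero over $\vqp$, a contradiction unless $\suppi(\M_\X) = \emptyset$. Thus $\M_\X$ is globally an integrable connection, free of rank $m_0(\M_\X) = 0$, and hence $\M_\X = 0$.

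\textbf{Step 3 (Length).} Given any strictly ascending chain $0 = \M_0 \subsetneq \M_1 \subsetneq \cdots \subsetneq \M_n = \M_\X$ of sub-holonomic submodules, each successive quotient $\M_i/\M_{i-1}$ is coadmissible and sub-holonomic (since passing to subquotients can only shrink the characteristic variety), and is nonzero by strictness of the chain. Step~2 then gives $\CC(\M_i/\M_{i-1}) \neq 0$, so $\ell(\M_i/\M_{i-1}) \geq 1$. Additivity from Step~1 yields $\ell(\M_\X) = \sum_{i=1}^n \ell(\M_i/\M_{i-1}) \geq n$, so the length of $\M_\X$ is bounded above by $\ell(\M_\X) \in \N$.

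The main technical obstacle is the additivity for the vertical multiplicities in Step~1: I must simultaneously verify that a short exact sequence of coadmissible $\Di$-modules descends to short exact sequences of coherent $\Dkq$-modules at each level (standard in the Fr\'echet-Stein framework), that the microlocalization $\Fkr \otimes_{\Dkq}(-)$ is exact on coherent modules (a consequence of its quasi-abelian Ore-type construction), and that one can choose $k_0, r_0$ uniformly large enough so that at every $x \in \suppi(\M_\X)$ the three stalks are simultaneously free over $\vqp$ of the ranks that define the multiplicities.
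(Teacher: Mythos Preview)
Your proof is correct and follows exactly the approach one expects. Note, however, that the present paper does not actually prove this proposition: it is stated as being ``demonstrated in \cite[section 6.2]{hallopeau3}'' and no proof is given here. So there is no in-paper argument to compare against; your sketch is essentially a reconstruction of what that cited proof must contain.

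A few remarks on the points you flag as technical obstacles, all of which are handled by results recalled earlier in the paper. The descent of a short exact sequence of coadmissible $\Di$-modules to short exact sequences at each level $k$ is part of the standard Fr\'echet--Stein yoga underpinning $\mathcal{C}_\X$. Exactness of $\Fkr \otimes_{\Dkq}(-)$ is the flatness statement recorded in the paper (Proposition~\ref{prop2.4}(3) for the blow-up case, with the analogue over $\X$ itself coming from \cite{hallopeau2}). The uniform choice of $k_0, r_0$ is unproblematic because $\suppi(\M_\X)$ is finite and the paper records that for sub-holonomic modules one has $\suppi(\M_\X) = \supp(\Mkr)$ for all sufficiently large $k \geq r$; taking a common threshold for the three modules is then just a finite maximum. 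Finally, in Step~2 your use of ``$(\Mkr)_x \neq 0$ for $x \in \suppi(\M_\X)$'' is justified precisely by this equality $\suppi(\M_\X) = \supp(\Mkr)$, since the latter is the ordinary support of a coherent sheaf; and the passage from $\suppi(\M_\X) = \emptyset$ to ``$\M_\X$ is an integrable connection'' is the content of \cite[proposition 6.8]{hallopeau3}, also quoted in the paper.
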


To resume, we obtain (only in the dimension one case) an Artinian category of sub-holonomic coadmissible $\Di$-modules which are also weakly holonomic.
More precisely, this category is the largest one such that its objects are of finite length in the sense that the module $\Di / P$ is sub-holonomic if and only if it is of finite length.
We will generalize in next sections the notion of sub-holonomicity for coadmissible $\Dcap$-modules (where $\X_K$ is the rigid analytic curve associated to $\X$) to get a full subcategory of the one defined by Ardakov-Bode-Wadsley.

\section{Microlocalization for an admissible blow-up $\X'$ of $\X$}\label{section2}

Let $\phi : \X' \to \X$ be an admissible blow-up of $\X$ defined by a coherent sheaf of open ideals $\I$ of $\O_\X$ containing a non-negative power of the uniformizer $\varpi$.
Let us point out the fact that the ideal $\I$ is not determined by the blow-up $\phi : \X' \to \X$.
We denote by $k_\I$ the minimal integer $\ell$ such that $\varpi^\ell \in \I$ and we put $\kp := \min\left\{ k_\I : \X'~\mathrm{is~the~blowing~up~of~\I~on~\X} \right\}$.
Huyghe-Schmidt-Strauch have introduced in \cite{huyghe} a sheaf $\Dkqp$ of differential operators on the blow-up $\X'$ for any congruence level $k \geq \kp$.
If $V$ if an affine open subset in $\phi^{-1}(U)$ with $U$ an affine open subset of $\X$ endowed with a local coordinate, then
\[ \Dkqp(V) = \left\{ \sum_{n \in \N} a_n \cdot (\varpi^k \partial)^n : ~~ a_n \in \O_{\X',\Q}(V), ~~ |a_n| \underset{ n \to \infty}{\longrightarrow} 0 \right\} .\]
We will introduce in this section microlocalization sheaves $\Firp$ for $\Dip$ as soon as $r > \kp$ using the ones of $\Di$.

\begin{example}\label{example2.1}
Assume that $\X = \Spf(\V\langle x \rangle)$ and that $\phi : \X' \to \X$ is the admissible blow-up of $\X$ defined by the ideal $I = (x , \varpi)$ of $\V\langle x \rangle$.
A standard open covering of $\X'$ is then given by $U_1 = \Spf \left( \V \left\langle \frac{x}{\varpi} \right\rangle \right)$ and $U_2 = \Spf \left( \V \left\langle \frac{\varpi}{x} , x \right\rangle \right)$.
In this example, $\kp = 1$. One has $\partial\left(\frac{x}{\varpi}\right) = \frac{1}{\varpi} \notin \V\langle x \rangle$ but $\varpi^k \cdot \partial\left(\frac{x}{\varpi}\right) = \varpi^{k-1} \in \V\langle x \rangle$ for any congruence level $k \geq 1$.
Thus, $\Dkqp(U_1)$ is a $K$-algebra as soon as $k \geq 1$. The same occurs for $\Dkqp(U_2)$.
When $k = 1$, $[ \varpi \partial , \frac{x}{\varpi} ]= \partial(x) = 1$. In particular, $\widehat{\mathcal{D}}^{(0)}_{\X', 1, \Q}(U_1)$ is not quasi-abelian but $\widehat{\mathcal{D}}^{(0)}_{\X', k, \Q}(U_1)$ is for any congruence level $k \geq 2$. Indeed,
\[ \left| \left[ \varpi^k \partial , \frac{x}{\varpi} \right] \right| = | \varpi^{k-1} | = p^{-(k-1)} < |\varpi^k \partial | \cdot \left|\frac{x}{\varpi}\right| = 1. \]
Thus, we can microlocalize the sheaf $\Dkqp$ of differential operators only for a congruence level $k \geq 2$.
More generally, if $\X'$ is any admissible blow-up of $\X$, then $\Dkqp$ will be quasi-abelian for $k > \kp$.
\end{example}

\subsection{Microlocalization of $\Dkqp$}

Let fix an ideal sheaf $\I$ of $\O_\X$ such that $\X'$ is the blowing-up of $\I$ on $\X$ and such that $\varpi^{\kp} \in \I$.
We consider a system $\{f_1 , \dots, f_n\}$ of generators of $\I(U)$, where $U = \Spf (A)$ is some affine open subset of $\X$.
Note $A_i := A \langle t_1 , \dots t_n \rangle / I_i$ and $\bar{A}_i := A_i/ (\varpi-torsion)$ with $I_i$ the ideal of $A$ generated by the elements $f_j - f_i \cdot t_j$ for all $j \in \{1, \dots , n\}$.
Then, an open covering of $\phi^{-1}(U)$ is given by the affine subsets $U_i := \Spf (\bar{A}_i) $ for $i \in \{ 1, \dots , n\}$.
We have
\[ \Dkqp(U_i) = \left\{ \sum_{n \in \N} a_\alpha \cdot (\varpi^k \partial)^n : ~~ a_n \in A_i\otimes_\V K, ~~ |a_n| \underset{ n \to \infty}{\longrightarrow} 0 \right\} .\]
Recall that $A$ is an integral domain because the formal curve $\X$ is both smooth and connected. Then $A_i \otimes_\V K$ is also an integral domain.
We consider on $A_i \otimes_\V K$ the spectral norm, denoted again by $| \cdot |$, which is an ultrametrique absolute value.
We define a multiplicative and complete norm $|\cdot|_k$ on $\Dkqp(U_i)$ by $|P|_k := \max_{n \in \N}\{ |a_n|\}$ for any $P = \sum_{n \in \N} a_n \cdot (\varpi^k \partial)^n \in \Dkqp(U_i)$.
It follows from the relation $f_i \cdot t_j = f_j$ that $\partial(f_i \cdot t_j) = \partial(f_i)\cdot t_j + f_i \cdot \partial(t_j)$ and
\[ \partial(t_j) = \frac{1}{f_i}\cdot \underbrace{(\partial(f_j)-\partial(f_i) t_j)}_{b_j \in A_i} . \]
For a congruence level $k > \kp$, one has $\varpi^k \partial(t_j) = \varpi^{k-\kp} \cdot \frac{\varpi^{\kp}}{f_i} \cdot b_j$.
Since $\varpi^{\kp} \in A$, we get $\frac{\varpi^{\kp}}{f_i} b_j \in A_i$. As a consequence, $|\varpi^k \partial(t_j) | \leq |\varpi^{k-\kp}| = p^{k-\kp} < 1$ and $| [\varpi^k \partial , t_j| = p^{k-\kp} < |\varpi^k \partial | \cdot |t_j| = 1$.
The same proof as \cite[proposition 3.5]{hallopeau3} then implies that the $K$-algebra $(\Dkqp(U_i), |\cdot|_k)$ is quasi-abelian with optimal constant $p^{k-\kp}$.
More generally, for a given congruence level $k > \kp$ and for any integer $r \in \{ \kp + 1 , \dots , k\}$, the $K$-algebra $(\Dkqp(U_i), |\cdot|_r)$ is quasi-abelian.
Nevertheless, the $K$-algebra $\widehat{\mathcal{D}}^{(0)}_{\X', \kp, \Q}(U_i)$ is not necessary quasi-abelian, see example \ref{example2.1}.

\begin{definition}
For integers $k \geq r \geq \kp + 1$, we define the microlocalization $K$-algebra
\[ (\Fkrp(U_i) , \| \cdot \|_{k, r}) := \Dkqp(U_i) \left\langle |\cdot|_r , \dots ,  |\cdot|_k ; \{ \partial^n, ~ n \in \N \} \right\rangle .\]
\end{definition}

By construction, $(\Fkrp(U_i) , \| \cdot \|_{k, r})$ is a Banach $K$-algebra admitting $\Dkqp(U_i)$ as sub-algebra.
Moreover, one can check that the norm $\| \cdot \|_{k, r}$ of $\Fkrp(U_i)$ extends the norm $|\cdot|_k$ of $\Dkqp(U_i)$.
A proof similar to the one of \cite[proposition 3.16]{hallopeau3} provides the following result.

\begin{prop}\label{prop2.3}
Any element $S$ of $\Fkrp(U_i)$ can be uniquely written in the form
\[ S = \sum_{n \in \N} a_n \cdot (\varpi^k \partial)^k + \sum_{n \in \N^*} a_{-n} \cdot (\varpi^r \partial)^{-n} \]
with $a_n \in A_i \otimes_\V K$ such that $a_n \to 0$ when $n \to \pm \infty$. Moreover, $\| S \|_{k , r} = \max_{n \in \Z} \{ |a_n| \}$.
\end{prop}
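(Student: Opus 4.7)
The plan is to transpose the proof of \cite[proposition 3.24]{hallopeau2} (recalled as Proposition \ref{prop1.1} above) to the setting of the admissible blow-up $\X'$. The three ingredients we have at our disposal are: the quasi-abelian character of $(\Dkqp(U_i), |\cdot|_s)$ for every $s \in \{\kp+1, \dots, k\}$ established just before the definition; the relation $\varpi^k \partial(t_j) = \varpi^{k-\kp} \cdot \frac{\varpi^{\kp}}{f_i} b_j \in \varpi^{k-\kp} A_i$, which measures how far $\varpi^k\partial$ is from being a $A_i$-derivation of norm one; and the universal property characterizing the microlocalization $\Fkrp(U_i)$.

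The first step is to introduce the candidate Banach $K$-algebra. Let $B$ denote the set of formal expressions
\[ S = \sum_{n \in \N} a_n \cdot (\varpi^k \partial)^n \, + \, \sum_{n \geq 1} a_{-n} \cdot (\varpi^r \partial)^{-n} \]
with $a_n \in A_i \otimes_\V K$ and $|a_n| \to 0$ as $n \to \pm \infty$, endowed with the norm $\|S\| := \max_{n \in \Z} \{|a_n|\}$. I would equip $B$ with the multiplication obtained by formally expanding the product and reducing to normal form via the two commutation rules
\[ (\varpi^k\partial) \cdot a = a\cdot(\varpi^k\partial) + \varpi^k\partial(a), \qquad (\varpi^r\partial)^{-1} \cdot a = \sum_{n \geq 0} (-1)^n (\varpi^r\partial)^n(a) \cdot (\varpi^r\partial)^{-(n+1)}, \]
for $a \in A_i \otimes_\V K$. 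The quasi-abelian estimate $|\varpi^s\partial(a)| \leq p^{-(s-\kp)}|a|$ for $s \in \{r, k\}$ guarantees that the second series converges in $B$, and a recursion on a double index then shows that the product of two normal forms is again a normal form with absolutely convergent coefficients, so that $B$ becomes a Banach $K$-algebra in which $\| \cdot \|$ is sub-multiplicative and in which $\partial$ is invertible.

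The second step is to match $B$ with $\Fkrp(U_i)$ via the universal property. The natural inclusion $\iota : \Dkqp(U_i) \hookrightarrow B$ is a continuous $K$-algebra morphism for which $\iota(\partial^\alpha) \in B^\times$ for all $\alpha$, and a direct computation on a normal form shows the required inequality
\[ \|\iota(\partial^\alpha)^{-1} \cdot \iota(P)\| \;\leq\; \max_{s\in\{r,\dots,k\}} \bigl\{ |\partial^\alpha|_s^{-1} \cdot |P|_s \bigr\} \]
for every $P \in \Dkqp(U_i)$. By the universal property defining the microlocalization, $\iota$ extends uniquely to a continuous homomorphism $\tilde{\iota} : \Fkrp(U_i) \to B$. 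Conversely, the formal-normal-form expression defines an obvious continuous homomorphism $B \to \Fkrp(U_i)$ sending $(\varpi^k\partial)^n$ and $(\varpi^r\partial)^{-n}$ to their tautological images; the two are mutually inverse because they agree on the dense sub-algebra generated by $\Dkqp(U_i)$ and $\partial^{-1}$. The existence and uniqueness of the normal form then follow at once, as does the norm equality $\|S\|_{k,r} = \max_n \{|a_n|\}$.

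The main obstacle, precisely as in the proof of Proposition \ref{prop1.1}, is showing that the multiplication on $B$ is well-defined (and associative) — that is, that moving every coefficient to the left through finite and negative powers of $\varpi^k\partial$ and $\varpi^r\partial$ produces an absolutely convergent double series in the single Banach norm $\|\cdot\|$. The new ingredient compared with \cite{hallopeau2} is that the derivation does not preserve $A_i$ but only satisfies $\varpi^k\partial(A_i) \subset \varpi^{k-\kp} A_i$, so one must keep track of the shift by $\varpi^{\kp}$ when iterating $\varpi^s \partial$ on elements of $A_i \otimes_\V K$. This is precisely the estimate that forces the restriction $r \geq \kp + 1$ and, once written down carefully, makes the remainder of the proof a straightforward transcription of the curve case.
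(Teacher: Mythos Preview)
Your proposal is correct and follows exactly the approach indicated by the paper, which simply states that the proof is similar to that of \cite[proposition 3.24]{hallopeau2}. You have spelled out in more detail than the paper does precisely how that transposition goes, correctly identifying that the only new feature is the derivation estimate $\varpi^s\partial(A_i)\subset \varpi^{s-\kp}A_i$ forcing $r\geq \kp+1$.
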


To resume, we have defined the microlocalization algebras $\Fkrp(U_i)$ for any standard open covering $\{ U_i \}$ of $\phi^{-1}(U)$ (for a fix ideal $\I$ inducing the blow-up $\phi : \X' \to \X$).
Let us denote by $\mathcal{U}$ the set of open affine subsets of $\X$ endowed with local coordinates and by $\mathcal{U}'$ the set of open subsets containing in the inverse images of elements of $\mathcal{U}$ and coming from standard open covering of $\X'$ with respect to $\I$.
Since $\mathcal{U}'$ is a basis of open subsets of $\X'$, we get a sheaf $\Fkrp$ on $\X'$ whose sections on open subsets $V$ in $\mathcal{U}'$ are exactly the microlocalization Banach $K$-algebras $\Fkrp(V)$.
Let $V$ be an open subset in $\mathcal{U}'$ and $k \geq r > \kp$.
Thanks to \cite[proposition 3.14]{hallopeau3}, we know that an element $P = \sum_{n \in \Z} a_n \cdot (\varpi^k \partial)^n$ of $\mathcal{F}_{\X' , k , k}(V)$ is invertible  if and only if $P$ has a unique coefficient $a_q$ of maximal spectral norm and if this coefficient is invertible in $\O_{\X' , \Q}(V)$.
In this case, we have
\[ P^{-1} = (\varpi^k\partial)^{-q} \cdot  \sum_{\ell \in \N} \left(-\sum_{n \in \Z\backslash \{0\}} \frac{a_{n+q}}{a_q} \cdot (\varpi^k \partial)^n \right)^\ell  \cdot a_q^{-1} .\]
We are now interested in invertible elements of the sub-algebra $\Fkrp(V)$ of $\mathcal{F}_{\X' , k , k}(V)$.
Knowing these elements will be useful to prove the equivalence of sub-holonomicity between the formal curves $\X'$ and $\X$.
Let $P = \sum_{n \geq 0} a_n \cdot (\varpi^k \partial)^n + \sum_{n < 0} a_n \cdot (\varpi^r \partial)^n$ be invertible in $\Fkrp(V) \subset \mathcal{F}_{\X' , k , k}(V)$ and let us note $\alpha_n = a_n$ for $n \geq 0$ and $\alpha_n = \varpi^{n(r-k)}\cdot a_n$ for $n < 0$.
Then $P = \sum_{n \in \Z} \alpha_n \cdot (\varpi^k \partial)^n$.
When $n < 0$, we have $|\alpha_n| = p^{n(k-r)} \cdot |a_n| \leq |a_n| \underset{n\to -\infty}{\longrightarrow} 0$.
We deduce from the fact $P$ is inversible in $\mathcal{F}_{\X' , k , k}(V)$ that $P$ has a unique coefficient $\alpha_q$ of maximal spectral norm and that its inverse $P^{-1} \in \mathcal{F}_{\X', k , k}(V)$ is given by the previous formula, replacing the coefficients $a_n$ by the $\alpha_n$.
At the end, we have to check when $P^{-1}$ defines an element of $\Fkrp(V)$, ie that the series $\sum_{\ell \in \N} \left(-\sum_{n \in \Z\backslash \{0\}} \frac{\alpha_{n+q}}{\alpha_q} \cdot (\varpi^k \partial)^n \right)^\ell$ converges in the Banach $K$-algebra $(\Fkrp(V), \| \cdot \|_{k , r})$.
Since the norm $\| \cdot \|_{k , r}$ is sub-multiplicative, a sufficient condition is that $\| Q \|_{k , r} < 1$ where $Q = \sum_{n \in \Z\backslash \{0\}} \frac{\alpha_{n+q}}{\alpha_q} \cdot (\varpi^k \partial)^n$.
Recall that the norm $\| \cdot \|_{k , r}$ is multiplicative on elements which contain only positive powers of the derivation $\partial$ or only negative powers of $\partial$.
Then we prove exactly as in \cite[proposition 3.21]{hallopeau3} that the converse holds.
In other words, the series $\sum_{\ell \in \N} \left(-Q \right)^\ell$ converges in $\Fkrp(V)$ if and only if $\| Q\|_{k , r} < 1$.
Let us simply point out the fact that this is not totally obvious because the norm $\| \cdot \|_{k, r}$ is not multiplicative as soon as $k > r$.
Using proposition \ref{prop2.3} expressing the norm in terms of coefficients, we obtain the following proposition which is just a simple generalisation of \cite[proposition 3.21]{hallopeau3}.

\begin{prop}\label{lemma2.4}
Let $V \in \mathcal{U}'$, $P = \sum_{n \geq 0} a_n \cdot (\varpi^k \partial)^n + \sum_{n < 0} a_n \cdot (\varpi^r \partial)^n \in \Fkrp(V)$ and $W \subset V$ an open subset. We note
\[ \alpha_n := 
\begin{cases}
a_n \hspace{1.85cm} \text{if} ~~ n \geq 0 \\
\varpi^{n(r-k)}\cdot a_n ~~ \text{if} ~~ n < 0 .
\end{cases}\]
Then $P \in \Fkrp(W) ^\times$ if and only if
\begin{enumerate}
\item
there is a unique element $\alpha_n$ of maximal spectral norm, said $\alpha_q$ ;
\item
$\alpha_q \in \O_{\X' , \Q}(W)^\times$ ;
\item
$|\alpha_{q-n}| \cdot p^{n(k-r)} < |\alpha_q|$ for all $n > 0$.
\end{enumerate}
\end{prop}

\begin{remark}
When $P = \sum_{n \geq 0} a_n \cdot (\varpi^k \partial)^n \in \Dkqp(V)$, the third condition simply says that $|a_n| \cdot p^{(k-r)(q-n)} < |a_q|$ for all $n \in \{ 0 , \dots , q-1 \}$.
For instance, this condition is satisfied when $|a_n| \leq |a_q|$.
\end{remark}

The microlocalization sheaves $\Fkrp$ are defined locally by localization of the non-commutative Banach $K$-algebras $\Dkqp(V)$ for $V\in \mathcal{U}'$.
We can also recover the sheaf $\Fkrp$ as the pull-back of $\Fkr$ via the admissible blow-up $\phi : \X' \to \X$.
In particular, the preceding definition of $\Fkrp$ does not depend on the chosen ideal $\I$ defining the blow-up $\X' \to \X$.
Indeed, let $i$ be a non-negative integer and $X_i := \X \times_\V \Spec(\V / \varpi^{i + 1})$ which is a $(\V / \varpi^{i + 1})$-scheme.
Then $\F_{X_i , k , r} := \Fkr^\circ \otimes_\V (\V / \varpi^{i + 1})$ is a sheaf of $(\V / \varpi^{i + 1})$-algebras over the classical scheme $X_i$.
We denote by $\mathcal{U}'_i$ the set of open affine subsets of $X_i' := \X' \times_\V \Spec(\V / \varpi^{i + 1})$ whose image by $\phi$ lies in some affine open subset of $X_i$.
It defines a basis of open subsets for the scheme $X_i'$.
Thanks to the proposition \ref{prop2.3}, one can easily observe that the germs of $\F_{X_i' , k , r} := \Fkrp^\circ \otimes_\V (\V / \varpi^{i + 1})$ and of $\phi^*\F_{X_i , k , r}$ coincide.
Thus, we get an isomorphism $\F_{X_i' , k , r} \simeq \phi^*\F_{X_i , k , r}$ of sheaves of $(\V / \varpi^{i + 1})$-algebras.
We denote by $\F_{\X', k, r}^\circ$ the sub-sheaf of $\Fkrp$ consisting of elements whose norm $\| \cdot \|_{k , r}$ is less or equal to one.
Then $\F_{\X', k, r}^\circ(U)$ is a complete $\V$-algebra for the $\varpi$-adic topology such that $\Fkrp = \F_{\X', k, r}^\circ \otimes_\V K$.
We define $\phi^! \Fkr$ as the projective limit over $i \in \N$ of the sheaves $\phi^*\F_{X_i , k , r} \simeq \F_{X_i' , k , r}$.
Then, as an $\O_{\X' , \Q}$-module, the sections of the sheaf $\phi^! \Fkr$ on any open affine subset $V$ of $\mathcal{U}'$ are given by the ones of $\Fkrp(V)$:
\[ \phi^! \Fkr(V) = \left\{ S = \sum_{n \in \N} a_n \cdot (\varpi^k \partial)^k + \sum_{n \in \N^*} a_{-n} \cdot (\varpi^r \partial)^{-n}, ~~ a_n \in \O_{\X' , \Q}(V), ~~ |a_n| \underset{ n \to \pm \infty}{\longrightarrow} 0 \right\} .\]
We equip this module with the norm $\| S \|_{k , r} = \max_{\alpha\in \Z^d} \{ |a_\alpha| \}$.
The topology given by this norm is induced by the $\varpi$-adic topology.
For any integers $k \geq r \geq \kp$ and $n \in \N$, we dispose of the natural action of the derivations $(\varpi^k \partial)^n$ on elements of $\O_{\X' , \Q}(V)$ given by the product law in the $K$-algebra $\Dkqp(V)$.
As a consequence, $\phi^! \Fkr$ is naturally a $\Dkqp$-module with $\partial^n \cdot \partial^m = \partial^{n+m}$ for any pair $(n , m)\in \Z^2$.
It remains to equip $\phi^! \Fkr(V)$ with an action of the derivations $\partial^{-1}$ on elements of $\O_{\X' , \Q}(V)$.
The natural candidate for this action is $\partial^{-1} f := \sum_{n=0}^{+\infty} (-1)^n \partial^n(f) \cdot \partial^{-(n+1)}$.
We have to check that this series converges inside $\phi^! \Fkr(V)$. One has $\partial^{-1} f = \varpi^r \cdot \sum_{n=0}^{+\infty} (-1)^n (\varpi^r\partial)^n(f) \cdot (\varpi^r\partial)^{-(n+1)}$.
In other words, the sequence $((\varpi^r\partial)^n(f))_n$ has to converge toward zero which is the case since $|(\varpi^r\partial)^n(f)| = p^{-r n} \cdot |\partial^n(f)| \leq p^{-r n} \cdot |\partial(f)|$.
Finally, the fact that $\Dkqp(V)$ is quasi-abelian when $k \geq r > \kp$ implies that the product of an operator with negative powers of $\partial$ with another operator composed of non-negative powers of $\partial$ converges for the norm defined before.
To conclude, this endow $\phi^! \Fkr(V)$ with a structure of $K$-algebra coinciding with the one of $\Fkrp$ when $k \geq r > \kp$.
Thus, we recover the microlocalization sheaf $\Fkrp$ as $\phi^! \Fkr$ for any integers $k \geq r > \kp$.
We now identitfy the sheaves $\Fkrp$ and $\phi^! \Fkr$. We deduce from this identification the following proposition.

\begin{prop}\label{prop2.5}
Let $\phi : \X' \to \X$ be an admissible blow-up of $\X$ and $k \geq r > \kp$.
We have $R^j \phi_*\Fkrp = 0$ for all $j \geq 1$.
As a consequence, there are natural sheaves isomorphism $\phi_* \Fkrp \simeq \Fkr$.
\end{prop}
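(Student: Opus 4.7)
The approach is to reduce to classical algebraic geometry through the identification $\Fkrp \simeq \phi^!\Fkr$ established in the paragraph immediately preceding the statement. This realizes $\Fkrp^\circ$ as the $\varpi$-adic completion of the sheaves $\F_{X_i',k,r} \simeq \phi_i^*\F_{X_i,k,r}$ on the classical noetherian schemes $X_i' = \X' \times_\V \Spec(\V/\varpi^{i+1})$, where $\phi_i : X_i' \to X_i$ denotes the morphism of schemes induced by $\phi$.

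At each fixed level $i$, I would use the projection formula to compute $R^j\phi_{i,*}\F_{X_i',k,r}$. The expression in proposition \ref{prop2.3} shows that $\F_{X_i,k,r}$ is, locally on $X_i$, a completed direct product of copies of $\O_{X_i}$ indexed by $n \in \Z$, hence $\O_{X_i}$-flat. The projection formula therefore gives
\[ R^j\phi_{i,*}\F_{X_i',k,r} \simeq R^j\phi_{i,*}(\phi_i^*\F_{X_i,k,r}) \simeq \F_{X_i,k,r} \otimes_{\O_{X_i}} R^j\phi_{i,*}\O_{X_i'}. \]
Since $\X$ is regular (being smooth over $\V$) and $\phi$ is an admissible blow-up, the standard vanishing theorem asserts that $\phi_{i,*}\O_{X_i'} \simeq \O_{X_i}$ and $R^j\phi_{i,*}\O_{X_i'} = 0$ for $j \geq 1$. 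Combined with the projection formula, this yields $\phi_{i,*}\F_{X_i',k,r} \simeq \F_{X_i,k,r}$ and vanishing in positive degrees at each level.

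The final step is to pass from the finite levels back to $\Fkrp$. Since $\X$ is quasi-compact, we may assume it is affine, in which case $\phi^{-1}(U)$ is covered by the finitely many affines of the basis $\mathcal{U}'$ associated with a standard covering; the higher direct images of $\Fkrp$ can then be computed as cohomology of the resulting finite \v{C}ech complex. The inverse system $(\F_{X_{i+1}',k,r} \to \F_{X_i',k,r})_i$ has surjective transition maps, so it satisfies the Mittag-Leffler condition, which allows us to commute the inverse limit over $i$ with the finite \v{C}ech cohomology. Tensoring with $K$ is exact and also commutes with cohomology. Piecing everything together yields $R^j\phi_*\Fkrp = 0$ for $j \geq 1$ and the isomorphism $\phi_*\Fkrp \simeq \Fkr$.

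The main obstacle is the last step: one must justify carefully that the inverse limit, the \v{C}ech cohomology, and the tensor with $K$ commute in the topologically-completed non-commutative setting. Everything else is a rather mechanical application of the projection formula together with the classical cohomological triviality of admissible blow-ups of a regular formal scheme.
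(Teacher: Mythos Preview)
Your approach coincides with the paper's, which simply defers to \cite[lemma~2.3.7]{huyghe} after observing that $\F_{X_i',k,r}$ is locally free over $\O_{X_i'}$; your projection-formula packaging via the identification $\F_{X_i',k,r}\simeq\phi_i^*\F_{X_i,k,r}$ is just another way to exploit this same local freeness, and your limit/\v{C}ech/$\otimes K$ step is precisely what the cited lemma carries out for $\Dkqp$.

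Two small points to tighten. First, at the finite level the sheaf $\F_{X_i,k,r}$ is locally a \emph{direct sum} $\bigoplus_{n\in\Z}\O_{X_i}$ rather than a completed direct product: modulo $\varpi^{i+1}$ the convergence condition forces all but finitely many coefficients to vanish. This is harmless for your flatness claim. Second, and more substantively, the schemes $X_i$ are not regular (they are non-reduced thickenings of the special fiber), so the vanishing $R^j\phi_{i,*}\O_{X_i'}=0$ is not an instance of a ``standard vanishing theorem'' for blow-ups of regular schemes applied at level $i$. What is actually available, and what the argument in \cite{huyghe} uses, is that for an admissible formal blow-up of a normal $\X$ one has $\phi_*\O_{\X'}=\O_\X$ and the higher $R^j\phi_*\O_{\X'}$ are bounded $\varpi$-torsion; combined with local freeness this gives the vanishing for $\Fkrp = \Fkrp^\circ\otimes_\V K$ after inverting $\varpi$. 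Since you do tensor with $K$ at the end and flag the limit interchange as the delicate step, the strategy is sound---only the justification of the $\O$-vanishing needs rerouting through the formal level rather than through regularity of $X_i$.
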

\begin{proof}
The proof is similar to the one of \cite[lemma 2.3.7]{huyghe}, replacing $\Dkq$ by $\Fkr$ and $\Dkqp$ by $\Fkrp$.
Indeed, the proof is based on the fact that over $X_i' = \X' \times_\V \Spec(\V / \varpi^{i + 1})$, the sheaf $\D_{X'_i , k}^{(0)} := \widehat{\D}^{(0)}_{\X' , k} \otimes_\V (\V /\varpi^{i+1})$ is locally a free $\O_{X'_i}$-module.
The same is true for $\F_{X_i' , k , r} = \F_{\X' , k , r}^\circ \otimes_\V (\V / \varpi^{i + 1})$.
\end{proof}

For any admissible blow-up $\phi : \X' \to \X$ of $\X$, we now identify the sheaves $\phi_* \Fkrp$ and $\Fkr$ via this natural isomorphism.
We recall that $\mathcal{U}'_i$ is the set of open affine subsets of $X_i'$ whose image by $\phi$ lies in some affine open subset of $X_i$.

\begin{prop}\label{prop2.4}
Let $\phi : \X' \to \X$ be an admissible blow-up and let $k \geq r > \kp$ be integers. 
\begin{enumerate}
\item
For all $i \in \N$, the sheaves $\F_{X_i' , k , r}$ are coherent and for any affine open subset $U_i$ of $X_i'$, the algebras $\F_{X_i' , k , r}(U_i)$ are two-sided Noetherian.
\item
For any open subset $V$ of $\mathcal{U}'$, the $K$-algebra $\Fkrp(V)$ is left and right Noetherian.
Moreover, the sheaves $\Fkrp$ are coherent.
\item
The maps $\F_{\X' , k+1 , r} \hookrightarrow \Fkrp$ and $\Dkqp \hookrightarrow \Fkrp$ are left and right flat.
\end{enumerate}
\end{prop}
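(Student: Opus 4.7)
The plan is to imitate the corresponding arguments for $\Fkr$ in \cite[sections 3 and 4]{hallopeau2}, exploiting the isomorphism $\F_{X_i',k,r} \simeq \phi^* \F_{X_i,k,r}$ established before Proposition \ref{prop2.5} together with Proposition \ref{prop2.3}, which gives a canonical series expansion for sections of $\Fkrp$ and will let us build the filtrations we need.

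For part (1), I would fix an affine open $U_i \in \mathcal{U}'_i$ and filter the $(\V/\varpi^{i+1})$-algebra $\F_{X_i',k,r}(U_i)$ by the order of the derivation. Concretely, define $\fil_n \F_{X_i',k,r}(U_i)$ to be the sub-module of series $\sum a_n (\varpi^k \partial)^n + \sum a_{-n}(\varpi^r \partial)^{-n}$ supported in degrees $\leq n$. Proposition \ref{prop2.3} shows this is an exhaustive, complete filtration by free $\O_{X_i'}(U_i)$-modules, and the quasi-abelian property (verified in the text above Definition~3 using the relation $\varpi^k \partial(t_j) \in \varpi^{k-\kp}A_i$) implies that the associated graded is commutative, isomorphic to the Laurent polynomial ring $\O_{X_i'}(U_i)[\xi,\xi^{-1}]$. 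Since $\X'$ is Noetherian, so is $\O_{X_i'}(U_i)$, hence so is the graded ring, and by a standard lifting argument (exactly as in \cite[proposition 3.31]{hallopeau2}) $\F_{X_i',k,r}(U_i)$ is two-sided Noetherian. Coherence of the sheaf $\F_{X_i',k,r}$ then follows because it is locally of the form $\phi^* \F_{X_i,k,r}$, pulled back from a coherent sheaf along an affine morphism, plus the Noetherian property on a basis of opens.

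For part (2), I would use the $\varpi$-adic filtration on $\F_{\X',k,r}^\circ(V)$ for $V \in \mathcal{U}'$. Since $\Fkrp(V) = \F_{\X',k,r}^\circ(V) \otimes_\V K$, and $\F_{\X',k,r}^\circ(V)/\varpi^{i+1} \simeq \F_{X_i',k,r}(V_i)$ (where $V_i$ is the reduction of $V$) is Noetherian by (1), the completeness of $\F_{\X',k,r}^\circ(V)$ in the $\varpi$-adic topology and the standard lemma that a $\varpi$-adically complete ring with Noetherian quotient mod $\varpi$ is itself Noetherian give us Noetherianness of $\F_{\X',k,r}^\circ(V)$; inverting $\varpi$ preserves this. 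Coherence of $\Fkrp$ then follows from Noetherianness on the basis $\mathcal{U}'$.

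For part (3), flatness is the most delicate point. I would handle the map $\Dkqp \to \Fkrp$ first, locally over $V \in \mathcal{U}'$. The cleanest route is to check flatness on the graded level: filter both sides by order in $\partial$; the passage $\Dkqp(V) \to \Fkrp(V)$ corresponds on the graded ring to the localization $\O_{\X',\Q}(V)[\xi] \to \O_{\X',\Q}(V)[\xi,\xi^{-1}]$, which is manifestly flat, and then lift flatness from $\gr$ to the filtered ring using the completeness established via Proposition \ref{prop2.3} (as in \cite[theorem 4.2]{hallopeau2}). For $\F_{\X',k+1,r} \to \Fkrp$, the same graded argument applies after observing that the inclusion $\Dkqp(V) \hookrightarrow \F_{\X',k+1,r}(V) \hookrightarrow \Fkrp(V)$ and the flatness of both outer maps force flatness of the middle one by a standard two-out-of-three trick for flatness of localizations of Noetherian rings. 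The main obstacle here is that $\|\cdot\|_{k,r}$ is only sub-multiplicative when $k > r$, so some care is needed in verifying that the filtration is multiplicative and that the graded ring is really commutative; this is where one must invoke the quasi-abelian constant $p^{k-\kp}<1$ and mimic the argument of \cite[proposition 3.29]{hallopeau2} already used to establish Lemma \ref{lemma2.4}.
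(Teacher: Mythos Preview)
Your treatment of parts (1) and (2) is essentially the paper's: the order filtration gives Noetherianity of $\F_{X_i',k,r}(U_i)$, and the norm (i.e.\ $\varpi$-adic) filtration handles $\Fkrp(V)$. One small point: Noetherianity on a basis alone does not give coherence of the sheaf; the paper invokes Berthelot's criterion \cite[3.1.3 and 3.3.3]{berthelot1}, which also requires flatness of the restriction maps $\Fkrp(V)\to\Fkrp(V')$ for $V'\subset V$ in $\mathcal{U}'$. This is easy, but your ``pulled back from a coherent sheaf'' remark does not cover it, since $\F_{X_i,k,r}$ is not a coherent $\O_{X_i}$-module.

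Part (3) has genuine gaps. For $\Dkqp\to\Fkrp$: the order filtration on $\Dkqp(V)$ is \emph{not Zariskian}, because $1+\varpi^k\partial\in 1+F_1$ is not a unit (its formal inverse $\sum_n(-\varpi^k\partial)^n$ has all terms of $|\cdot|_k$-norm $1$ and diverges). So the standard ``$\gr$-flat $\Rightarrow$ flat'' lemma does not apply, and the completeness you cite from Proposition~\ref{prop2.3} concerns the module $\Fkrp$, not the base ring. For $\F_{\X',k+1,r}\to\Fkrp$: the inclusion $\Dkqp(V)\hookrightarrow\F_{\X',k+1,r}(V)$ you invoke \emph{does not exist}. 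Rewriting $\sum a_n(\varpi^k\partial)^n=\sum a_n\varpi^{-n}(\varpi^{k+1}\partial)^n$, membership in $\F_{\X',k+1,r}(V)$ would require $a_n\varpi^{-n}\to 0$, which is strictly stronger than $a_n\to 0$. And even when such a factorization exists, flatness has no two-out-of-three property of the form you use.

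The paper's route for (3) is different. For $\F_{\X',k+1,r}\to\Fkrp$ it refers to \cite[prop.~3.26]{hallopeau2}, working with the $\varpi$-adic filtration: modulo $\varpi$ the rings become commutative and the map is either an isomorphism (when $k>r$) or a Laurent localization (when $k=r$), and one lifts by completeness. For $\Dkqp\to\Fkrp$, the key idea you are missing is the detour through the \emph{diagonal} microlocalization $\F_{\X',r,r}$. One has $\Dkqp\hookrightarrow\widehat{\D}^{(0)}_{\X',r,\Q}\hookrightarrow\F_{\X',r,r}$ with both arrows flat (the first by \cite[2.2.16]{huyghe}, the second because modulo $\varpi$ it is the commutative localization $\O_{X'}[\xi_r]\to\O_{X'}[\xi_r,\xi_r^{-1}]$). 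The crucial identification is $\F_{\X',r,r}/\Fkrp\simeq\widehat{\D}^{(0)}_{\X',r,\Q}/\Dkqp$; since the right-hand side has a two-term flat resolution over $\Dkqp$, its $\mathrm{Tor}_2^{\Dkqp}$ vanishes, and the long exact Tor sequence for $0\to\Fkrp\to\F_{\X',r,r}\to\F_{\X',r,r}/\Fkrp\to 0$ then gives $\mathrm{Tor}_1^{\Dkqp}(\Fkrp,-)=0$.
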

\begin{proof}~
\begin{enumerate}
\item
We demonstrate as in \cite[proposition 2.2.2]{huyghe} that the sheaves $\F_{X_i' , k , r}$ are coherent.
For an affine open subset $U_i$ in $\mathcal{U}'_i$, we check by using the filtration induced by the order in $\partial$ that the $(\V / \varpi^{i +1})$-algebra $\F_{X_i' , k , r}(U_i)$ is Noetherian.
The sheaf $\F_{X'_i , k , r}$ can be written as an inductive limit of coherent sheaves filtered by the order of the derivation $\partial$.
We deduce that for any open subsets $U'_i \subset U_i$ in $\mathcal{U}'_i$, we have
\[ \F_{X_i' , k , r}(U'_i) \simeq \O_{X_i'}(U'_i) \otimes_{\O_{X_i'}(U_i)} \F_{X_i' , k , r}(U_i). \]
In particular, $\F_{X_i' , k , r}(U'_i)$ is flat over $\F_{X_i' , k , r}(U_i)$.
The coherence of $\F_{X_i' , k , r}$ then comes from \cite[proposition 3.1.3]{berthelot1}.
Finally, we prove by using the same arguments as in \cite[proposition 2.2.2]{huyghe}, covering $U_i$ by a finite union of open subsets in $\mathcal{U}'_i$, that the algebra $\F_{X_i' , k , r}(U_i)$ is two-sided Noetherian.
\item
The first statement is the analogue for the admissible blow-up $\X'$ of \cite[proposition 3.20]{hallopeau3}.
The proof is exactly the same.
Indeed, on any affine open subset $V$ of $\mathcal{U}'$, the description of $\Fkrp(V)$ is similar to the description of $\Fkr(U)$.
Then we use the filtration given by the complete sub-multiplicative norm $\| \cdot \|_{k , r}$ and we easily check that the associated graded ring is Noetherian.
Finally, since $\F_{X_i , k , r}$ is a coherent sheaf which is also a quasi-coherent $\O_{X_i}$-module for any $i$, it follows from \cite[section 3.3.3]{berthelot1} that $\Fkrp$ is coherent.

\item
Let us fix $k \geq r > \kp$.
We obtain as in \cite[proposition 3.20]{hallopeau3} that the sheaves homomorphism $\F_{\X' , k+1 , r} \to \Fkrp$ is left and right flat.
It remains to prove that the homomorphism $\Dkqp \to \Fkrp$ is left and right flat.
Following the proof of \cite[lemma 1.3.4]{adriano}, it suffices to show that $\mathrm{Tor}^1_{\Dkqp}(\Fkrp , \bullet) = 0$.
But $\F_{\X' , r , r}$ is flat over $\Dkqp$ thanks to \cite[proposition 3.10]{hallopeau3}. Indeed, the idea is the following.
If $\F_{\X' , r , r}^\circ$ is the subsheaf of $\F_{\X' , r , r}$ consisting of operators with coefficients in $\O_{\X'}$, then $\F_{\X' , r , r} = \F_{\X' , r , r}^\circ \otimes_\V K$.
Moreover, $\F_{\X' , r , r}^\circ \otimes_\V \kappa$ is the classical localization of the sheaf of commutative $\kappa$-algebras $\D_{X , r}^{(0)} = \widehat{\D}_{\X , r}^{(0)} \otimes_\V \kappa$ with respect to the multiplicative part of the powers of $\varpi^r\partial$.
In particular, the morphism $(\Dkp \otimes_\V \kappa) \to (\F_{\X' , r , r}^\circ \otimes_\V \kappa)$ is left and right flat.
Taking $\varpi$-adic completion preserves flatness.
As a consequence, $\mathrm{Tor}^1_{\Dkqp}(\Fkrp , \bullet) = 0$ as soon as $\mathrm{Tor}^1_{\Dkqp}(\F_{\X' , r , r} / \Fkrp , \bullet) = 0$.
Since $\F_{\X' , r , r} / \Fkrp \simeq \widehat{\D}^{(0)}_{\X' , r , \Q} / \Dkqp$ and $\Dkqp \to \widehat{\D}^{(0)}_{\X' , r , \Q}$ is flat by \cite[proposition 2.2.16]{huyghe}, we get the flatness of $\Fkrp$ over $\Dkqp$.
\end{enumerate}
\end{proof}

Thanks to proposition \ref{prop2.4}, we know that the sheaves $\Fkrp$ satisfy the hypothesis of \cite[auxiliary result 2.2.7]{huyghe} for the affine open basis $\mathcal{U'}$ of $\X'$.
As a consequence, we obtain exactly by the same arguments theorems $A$ and $B$ for the microlocalization sheaves $\Fkrp$ over open subsets of $\mathcal{U'}$.
Moreover, we also deduce that coherent $\Fkr$-modules are locally, for elements of $\mathcal{U'}$, of finite presentation.

\begin{cor}
Let $U'$ be an open subset of $\X'$ in $\mathcal{U'}$.
Then theorems A and B are satisfied for coherent $\Fkrp(U')$-modules.
Moreover, coherent $\Fkrp(U')$-modules are of finite presentation.
\end{cor}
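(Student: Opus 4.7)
The plan is to invoke the abstract Theorems A and B machinery of \cite[auxiliary result 2.2.7]{huyghe}, together with the Noetherianity and flatness statements already established. All the hard work has been done in Proposition \ref{prop2.4}, so this corollary should reduce to a verification that its hypotheses match those of the abstract result.

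First I would check that the sheaf $\Fkrp$ on the open affine basis $\mathcal{U}'$ of $\X'$ satisfies the standing assumptions of the cited auxiliary result. Item (1) of Proposition \ref{prop2.4} gives, at each finite level $i$, coherence of $\F_{X_i', k , r}$, Noetherianity of the $(\V/\varpi^{i+1})$-algebras $\F_{X_i' , k , r}(U_i)$ for $U_i \in \mathcal{U}'_i$, and a flat localization formula $\F_{X_i' , k , r}(U'_i) \simeq \O_{X_i'}(U'_i) \otimes_{\O_{X_i'}(U_i)} \F_{X_i' , k , r}(U_i)$ for nested opens in $\mathcal{U}'_i$. Passing to the $\varpi$-adic completion and tensoring with $K$, item (2) of the same proposition gives the corresponding Noetherianity and coherence of $\Fkrp$ on the basis $\mathcal{U}'$. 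These are exactly the ingredients Huyghe--Schmidt--Strauch use for their sheaves $\Dkq$, so their proof transfers verbatim.

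With the hypotheses in place, the abstract result immediately yields Theorems A and B on any $U' \in \mathcal{U}'$: higher cohomology of a coherent $\Fkrp$-module vanishes on $U'$, and such a module is generated by its global sections, which form a finitely generated $\Fkrp(U')$-module. For the final assertion on finite presentation, let $\M$ be a coherent $\Fkrp$-module on $U'$. By Theorem A there is a surjection $\Fkrp^{\oplus n} \twoheadrightarrow \M$; the kernel is coherent because $\Fkrp$ itself is a coherent sheaf (Proposition \ref{prop2.4}(2)), and applying Theorem A once more produces a finite set of generators for the kernel, hence a finite presentation of $\M$.

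I do not expect a serious obstacle: the only thing worth double-checking is that the flatness statement from Proposition \ref{prop2.4}(1) is genuinely the one required by \cite[auxiliary result 2.2.7]{huyghe}, i.e. that restricting from an open in $\mathcal{U}'_i$ to a smaller one in $\mathcal{U}'_i$ is compatible with the $\O_{X_i'}$-tensor product description. This is granted by the explicit identification $\F_{X_i' , k , r} \simeq \phi^*\F_{X_i , k , r}$ established just before Proposition \ref{prop2.5}, so the transfer of the Huyghe--Schmidt--Strauch argument is essentially formal.
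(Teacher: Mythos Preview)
Your proposal is correct and follows essentially the same approach as the paper. The paper does not give a formal proof environment for this corollary; its justification is the paragraph immediately preceding the statement, which says that Proposition \ref{prop2.4} verifies the hypotheses of \cite[auxiliary result 2.2.7]{huyghe} for the basis $\mathcal{U}'$, and hence Theorems A and B and the finite presentation follow by the same arguments as in \cite{huyghe}. Your write-up simply spells this out with more care. One minor simplification: for the finite presentation claim you can bypass the second application of Theorem A, since $\Fkrp(U')$ is left and right Noetherian by Proposition \ref{prop2.4}(2), so any finitely generated module over it is automatically finitely presented.
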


\subsection{Microlocalization of $\Dip$}

For any $k \geq \kp$, consider the transition homomorphism $\widehat{\mathcal{D}}^{(0)}_{\X', k+1, \Q} \hookrightarrow \Dkqp$ induced by the local inclusions $\widehat{\mathcal{D}}^{(0)}_{\X', k+1, \Q}(V) \hookrightarrow \Dkqp(V)$ for all open affine subset $V$ in $\mathcal{U}'$.
Then $\Dip$ is defined as the projective limit of the sheaves $\Dkqp$ : $\Dip := \varprojlim_{k \geq \kp} \Dkqp$.
For an affine open subset $V$ in $\mathcal{U}'$,
\[ \Dip(V) = \left\{ \sum_{n \in \N} a_n \cdot \partial^n :  \,  a_n \in \O_{\X', \Q} (V)~~ \text{such that} ~~ \forall \eta>0,~  \lim_{n \to \infty} |a_n| \cdot \eta^n =0 \right\} \]
is a Fréchet-Stein $K$-algebra.
The canonical isomorphisms $\phi_* \Dkqp = \Dkq$ for $k \geq \kp$ obtained in \cite[lemma 2.3.7]{huyghe} give rise to a canonical isomorphism $\phi_* \Dip = \Di$.
Indeed, for any open subset $V$ of $\mathcal{U}'$, one has
\[ \phi_* \Dip(V)  = \Dip(\phi^{-1}(V)) = \varprojlim_{k \geq \kp} \Dkqp(\phi^{-1}(V)) = \varprojlim_{k \geq \kp} \Dkq(V) = \Di(V) . \]
Similarly, we consider for all congruence level $k \geq r > \kp$ the transition homomorphism $\mathcal{F}_{\X',k+1 , r} \hookrightarrow \Fkrp$ induced by the local inclusions over open subsets of $\mathcal{U}'$. We define
\[ \Firp := \varprojlim_{k \geq r} \Fkrp .\]
The canonical isomorphisms $\phi_* \Fkrp = \Fkr$ for all congruence level $k \geq r \geq \kp + 1$ also give rise to a canonical isomorphism $\phi_* \Firp = \Fir$.

\begin{prop}
The sections of $\Firp$ over an open subset $V$ of $\mathcal{U}'$ is the following Fréchet-Stein algebra:
\begin{align*}
\Firp(V) & =  \displaystyle \left\{ \sum_{n \in \N} a_n \cdot \partial^n+ \sum_{n \in \N^*}  a_{-n} \cdot (\varpi^r \partial)^{-n} : \right. ~~ a_n \in \O_{\X', \Q} (V) ~~ \text{such that}  \\
& \hspace{3cm} \left.  \forall \mu \in \R, ~~ \lim_{n \to \infty} |a_n| \cdot \varpi^{- \mu n} = 0 ~~ \text{and} ~~   \lim_{n \to - \infty} |a_n|  =0 \right\}.
\end{align*}
\end{prop}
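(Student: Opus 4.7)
The plan is to unwind the projective limit $\Firp(V) = \varprojlim_{k \geq r} \Fkrp(V)$ using the explicit normal form given by Proposition \ref{prop2.3}.

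First, I would make the transition maps $\F_{\X', k+1, r}(V) \hookrightarrow \Fkrp(V)$ explicit on normal forms. Writing each element $S_k \in \Fkrp(V)$ uniquely as
$$S_k = \sum_{n \geq 0} a_n^{(k)} (\varpi^k \partial)^n + \sum_{n > 0} a_{-n}^{(k)} (\varpi^r \partial)^{-n},$$
and using the elementary identity $(\varpi^{k+1}\partial)^n = \varpi^n (\varpi^k\partial)^n$, the uniqueness clause of Proposition \ref{prop2.3} forces any compatible family $(S_k)_{k \geq r}$ to satisfy
$$a_n^{(k)} = \varpi^n \cdot a_n^{(k+1)} \quad (n \geq 0), \qquad a_{-n}^{(k)} = a_{-n}^{(k+1)} \quad (n > 0),$$
the negative part being stable because $r$ is fixed and $(\varpi^r\partial)^{-n}$ is literally the same element in both $\F_{\X',k+1,r}(V)$ and $\Fkrp(V)$.

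Next, given such a compatible family, I would set $a_n := \varpi^{-kn} a_n^{(k)} \in \O_{\X', \Q}(V)$ for $n \geq 0$, which is independent of $k$ by the first relation, and $a_{-n} := a_{-n}^{(r)}$ for $n > 0$. The positive-degree terms then read $a_n^{(k)} (\varpi^k\partial)^n = a_n \partial^n$, so the family assembles into the formal series
$$S = \sum_{n \geq 0} a_n \partial^n + \sum_{n > 0} a_{-n} (\varpi^r \partial)^{-n}.$$
The Banach convergence $|a_n^{(k)}| \to 0$ from Proposition \ref{prop2.3}, imposed at every level $k \geq r$, becomes $|a_n| \cdot |\varpi|^{kn} \to 0$ for all $k \geq r$, which is equivalent to $\lim_{n \to +\infty} a_n \cdot \varpi^{-\mu n} = 0$ for every $\mu \in \R$. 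The condition on negative indices gives directly $\lim_{n \to -\infty} a_n = 0$. Conversely, any pair of sequences satisfying these conditions reconstructs a compatible family via the substitutions $a_n^{(k)} := \varpi^{kn} a_n$ and $a_{-n}^{(k)} := a_{-n}$. The Fréchet--Stein property is then furnished by Proposition \ref{prop2.4}: each $\Fkrp(V)$ is a Noetherian Banach $K$-algebra and the transition maps $\F_{\X',k+1,r} \to \Fkrp$ are left and right flat, with dense images coming from the polynomial subalgebra $\O_{\X',\Q}(V)\langle \partial, \partial^{-1}\rangle$.

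The main subtlety is keeping the two regimes of indices separate and verifying that the negative-power coefficients are genuinely invariant along the projective system; the uniqueness clause of Proposition \ref{prop2.3}, together with the fact that $r$ is fixed throughout, is precisely what provides this invariance.
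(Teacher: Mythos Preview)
Your approach is correct and is precisely what the paper's reference \cite[proposition 4.2]{hallopeau2} carries out for the base $\X$; the paper itself merely invokes that reference together with Proposition~\ref{prop2.4} and declares the argument identical.

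There is, however, a sign slip in your formulas. From $a_n^{(k)}(\varpi^k\partial)^n = a_n^{(k)}\varpi^{kn}\partial^n$ one reads off $a_n = \varpi^{kn} a_n^{(k)}$, not $a_n = \varpi^{-kn}a_n^{(k)}$; equivalently $a_n^{(k)} = \varpi^{-kn}a_n$. (You can check this against your own transition relation: $\varpi^{(k+1)n}a_n^{(k+1)} = \varpi^{(k+1)n}\cdot\varpi^{-n}a_n^{(k)} = \varpi^{kn}a_n^{(k)}$, so it is $\varpi^{kn}a_n^{(k)}$ that is independent of $k$.) The Banach condition $|a_n^{(k)}|\to 0$ then becomes $|a_n|\cdot|\varpi|^{-kn} = |a_n|\cdot p^{kn}\to 0$, and it is the validity of this for \emph{every} $k\geq r$ that yields the rapid decay $a_n\varpi^{-\mu n}\to 0$ for all $\mu$. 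With your sign the condition would read $|a_n|\,p^{-kn}\to 0$, which is automatic for any bounded sequence and forces nothing. Once the exponent is corrected, your argument goes through verbatim.
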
 
\begin{proof}
This is a consequence of proposition \ref{prop2.4} and the proof is similar to the one of \cite[proposition 4.1]{hallopeau3}.
\end{proof}

Let $k > r > \kp$. The local inclusions $\Fkrp(V) \hookrightarrow \F_{\X' , k , r+1}(V)$ for any open subset $V$ in $\mathcal{U}'$ give rise to a transition homomorphism $\Fkrp \hookrightarrow \F_{\X' , k , r+1}$ such that the following diagram commutes:
\[ \xymatrix@R=3pc @C=3pc{ \F_{\X' , k+1 , r} \ar@{^{(}->}[r]\ar@{^{(}->}[d] & \Fkrp\ar@{^{(}->}[d] \\ \F_{\X', k+1 , r+1} \ar@{^{(}->}[r] & \F_{\X' , k , r+1} .} \]
Thus, passing to the projective limit over $k$ provides a sheaves homomorphism $\Firp \hookrightarrow \F_{\X' , \infty , r+1}$ which is injective.
Since $\X$ is suppose to be quasi-compact, so is the admissible blow-up $\X'$.
Then \cite[\href{https://stacks.math.columbia.edu/tag/009F}{Tag 009F}]{stacks-project} implies that the presheaf $\Fip := \varinjlim_{r > \kp} \Firp$ is in fact a sheaf of $K$-algebras over $\X'$.
For any open subset $V$ in $\mathcal{U}'$, we have
\begin{align*}
\Fip(V) & = \left\{ \sum_{n \in \Z} a_n \cdot \partial^n : ~~a_n \in \O_{\X' , \Q}(V), ~~  \forall \mu > 0, ~ |a_n| \cdot \mu^n \underset{n \to \infty} {\longrightarrow} 0 , \right. \\
& \hspace{7cm} \left. \exists R > 1, ~~ |a_n| \cdot R^{-n} \underset{n \to -\infty} {\longrightarrow} 0 \right\}.
\end{align*}
We consider on $\Fip(V)$ the finest topology for which the embeddings $\Fir(V) \hookrightarrow \Fi(V)$ are continuous for all integers $r > \kp$. This is the locally convex topology.
The following result corresponds to \cite[theorem 4.9]{hallopeau3} for the admissible blow-up $\phi : \X' \to \X$. The proof is similar.

\begin{theorem}\label{theorem2.8}
Let $V$ be an affine open subset in $\mathcal{U}'$ and $V' \subset V$ another open subset.
A differential operator $P$ of $\Dip(V)$ is invertible in the microlocalization algebra $\Fip(V')$ if and only if
\begin{enumerate}
\item
$P = \sum_{n=0}^q a_n \cdot \partial^n$ is finite of order $q\in \N$ in $\Dip(V)$,
\item
$a_q$ is invertible in $\O_{\X' , \Q}(V')$.
\end{enumerate}
\end{theorem}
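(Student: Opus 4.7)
The plan is to adapt the proof of \cite[theorem 4.16]{hallopeau2}, using Lemma \ref{lemma2.4} as the key finite-level invertibility criterion in place of \cite[proposition 3.29]{hallopeau2}. The two implications require rather different arguments.

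For the \emph{sufficiency} direction, suppose $P = \sum_{n=0}^{q} a_n \partial^n \in \Dip(V)$ satisfies conditions 1--3 of the theorem. Shrink $V$ to an affine open subset $V' \in \mathcal{U}'$ on which $a_q$ is invertible in $\O_{\X',\Q}(V')$ and fix any $r > \kp$. For each $k \geq r$, Proposition \ref{prop2.3} gives the expansion $P = \sum_{n=0}^{q} (a_n \varpi^{-kn}) (\varpi^k \partial)^n$, so the re-parametrized coefficients appearing in Lemma \ref{lemma2.4} are $\alpha_n = a_n \varpi^{-kn}$ for $0 \leq n \leq q$ (and zero otherwise), with spectral norms $|\alpha_n| = |a_n| \cdot p^{kn}$. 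Condition 2 of the theorem then implies $|a_q| p^{kq} > |a_n| p^{kn}$ for every $n < q$ and every $k \geq r$, so all three conditions of Lemma \ref{lemma2.4} hold automatically at every level. This gives $P \in \Fkrp(V')^\times$ for all $k \geq r$, and the explicit series expansion of the inverse shows that these inverses are compatible under the transition maps $\F_{\X',k+1,r} \to \Fkrp$, yielding $P \in \Firp(V')^\times \subset \Fip(V')^\times$.

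For the \emph{necessity} direction, assume $P \in \Fip(V')^\times$ for some open subset $V' \subset V$. Since $\Fip = \varinjlim_{r > \kp} \Firp$ and $\Firp = \varprojlim_{k \geq r} \Fkrp$, there is some $r > \kp$ such that $P$ is invertible in $\Fkrp(V')$ for every $k \geq r$. Lemma \ref{lemma2.4} produces, at each level, a unique index $q_k$ where the re-parametrized coefficient $a_{q_k} \varpi^{-k q_k}$ attains the maximal spectral norm and is invertible in $\O_{\X',\Q}(V')$. The crucial step is to show that $P$ must be a finite sum: if some coefficient $a_m$ with $m > q_r$ were non-zero, then $|a_m| p^{km}$ would eventually exceed $|a_{q_r}| p^{k q_r}$ as $k \to \infty$ and force $q_k \geq m$ for large $k$, so $(q_k)$ would grow unboundedly. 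Compatibility of the inverses $P^{-1}$ in the projective system $(\Fkrp)_{k \geq r}$, together with the tempered decay $|a_n| \eta^n \to 0$ for all $\eta > 0$ characterizing elements of $\Dip(V)$, forces $(q_k)$ to stabilize at some finite value $q$. Once $P$ is known to be finite of order $q$, conditions 2 and 3 of the theorem follow immediately from the maximality and invertibility statements of Lemma \ref{lemma2.4}, translated from the $\alpha_n$ back to the original coefficients $a_n$.

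The principal obstacle is precisely this finiteness argument: proving that the sequence of dominant indices $(q_k)_{k \geq r}$ is bounded. This mirrors the most delicate step in the proof of \cite[theorem 4.16]{hallopeau2} and should transport to the admissible blow-up $\X'$ without essential change, thanks to the parallel structures provided by Proposition \ref{prop2.3} and Lemma \ref{lemma2.4}.
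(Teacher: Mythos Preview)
Your proposal is correct and follows exactly the approach the paper indicates: the paper's own proof consists of the single sentence ``The proof is similar [to \cite[theorem 4.16]{hallopeau2}]'', and you carry out precisely this adaptation, with Lemma \ref{lemma2.4} replacing \cite[proposition 3.29]{hallopeau2} as the finite-level invertibility criterion. Your identification of the finiteness of $P$ as the delicate step in the necessity direction is accurate and matches the structure of the original argument.

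One small point deserves care in the final step of the necessity direction. You write that once $P$ is known to be finite of order $q$, condition 2 follows ``immediately'' from the maximality statement in Lemma \ref{lemma2.4}. But condition (i) of that lemma at level $k$ only says $|a_q|\,p^{kq} > |a_m|\,p^{km}$, i.e.\ $|a_q| > |a_m|\,p^{-k(q-m)}$, which is weaker than $|a_q| > |a_m|$. To recover the form of condition 2 you need to combine this with condition (iii) (the bound $\|Q\|_{k,r}<1$) and track how the constraint behaves as the level varies; this is exactly what is done in \cite[theorem 4.16]{hallopeau2}, so your deferral to that reference covers it, but it is not quite as immediate as the word suggests.
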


\section{Sub-holonomic $\Dip$-modules}\label{section3}

We fix an admissible blow-up $\phi : \X' \to \X$ of the smooth formal curve $\X$.
Since $\X$ is quasi-compact, the blow-up $\X'$ is also quasi-compact.
As a consequence, $\X'$ is a Noetherian topological space.
Denote by $\tau_\X$ the relative tangent sheaf of $\X$ over $\V$ and let us introduce $\tau_{\X'} := \phi^* \tau_\X$.
Thanks to \cite[lemma 2.2.1]{huyghe}, we know that $\tau_{\X'}$ is a locally free $\O_{\X'}$-module of rank $\dim (\X) = 1$.
Let $\mathrm{Sym}(\tau_{\X'})$ be the sheaf of graded symmetric algebras generated by the sheaf $\tau_{\X'}$.

\begin{definition}
We define the cotangent space $T^* \X'$ of $\X'$ as the $\varpi$-adic completion of the $\V$-scheme $\text{Spec} \left( \mathrm{Sym}(\tau_{\X'}) \right)$. Let us denote by $\pi'$ the canonical projection $T^* \X' \to \X'$.
\end{definition}

We identify $\X'$ with the zero section of of its cotangent space $T^*\X'$.
In this section, we associate to any coadmissible $\Dip$-module a characteristic variety as it was done for coadmissible $\Di$-modules in \cite{hallopeau3}.

\subsection{Microlocalization of coadmissible $\Dip$-modules}

Let $\M_\X = \varprojlim_k \Mk$ be a coadmissible $\Di$-module and $\phi : \X' \to \X$ an admissible blow-up of $\X$.
Recall that by definition, the $\Dkq$-module $\Mk$ is coherent and the $\widehat{\D}^{(0)}_{\X , k+1 , \Q}$-linear transition homomorphisms $\M_{\X , k+1} \to \Mk$ induce $\Dkq$-linear isomorphisms $\Mk \simeq \Dkq \otimes_{\widehat{\D}^{(0)}_{\X , k+1 , \Q}} \M_{\X , k+1}$.
For any congruence level $k \geq \kp$, we define as in \cite{huyghe} the coherent $\Dkqp$-module $\phi^! \Mk := \Dkqp \otimes_{\phi^{-1} \Dkq} \phi^{-1} \Mk$.
The functors $\phi_*$ and $\phi^!$ induce an equivalence of categories between coherent $\Dkqp$-modules and coherent $\Dkq$-modules for which they are quasi-inverse.
Moreover, the $\Dip$-module $\phi^! \M_\X := \varprojlim_{k \geq \kp} \phi^! \Mk$ is coadmissible.
Conversely, if $\M_{\X'}$ is a coadmissible $\Dip$-module, then $\phi_* \M_{\X'}$ is a coadmissible $\Di$-module.
Thus, we have an equivalence of categories, established in \cite[proposition 3.1.12]{huyghe}, between coadmissible $\Dip$-modules and coadmissible $\Di$-modules via these functors.
Let $\M_{\X'} = \varprojlim_{k \geq \kp} \M_{\X' , k}$ be a coadmissible $\Dip$-module.
We introduce for any integers $k \geq r > \kp$ the coherent $\Fkrp$-module $\Mkrp := \Fkrp \otimes_{\Dkqp} \M_{\X' , k}$.
Since $\F_{X_i' , k , r} \simeq \phi^*\F_{X_i , k , r}$, the sheaf $\F_{X_i' , k , r}$ can be uniquely endowed with a structure of right $\phi^{-1}\F_{X_i , k , r}$-module via this isomorphism.
Passing to the $\varpi$-adic completion, we get a structure of right $\phi^{-1} \Fkr$-module on $\Fkrp$.
In particular, the microlocalization sheaf $\Fkrp$ is also a right $\phi^{-1} \Dkq$-module.
If $\M_\X = \varprojlim_k \Mk$ is a coadmissible $\Di$-module and $\M_{\X'} = \phi^! \M_\X$, then one can verify that
\[ \Mkrp \simeq \Fkrp \otimes_{\phi^{-1} \Dkq} \phi^{-1} \Mk \simeq \Fkrp \otimes_{\phi^{-1} \Fkr} \phi^{-1} \Mkr . \]
More generally, we define an inverse image functor
\[ \phi^! \Mkr :=  \Fkrp \otimes_{\phi^{-1} \Fkr} \phi^{-1} \Mkr \]
from coherent $\Fkr$-modules to coherent $\Fkrp$-modules.
This functor is compatible with the formation of $\Mkp = \phi^! \Mk$ starting from a coherent $\Dk$-module $\Mk$.
Indeed, as we have just said, there is a natural isomorphism $\Mkrp = \Fkrp \otimes_{\Dkqp} \phi^! \Mk \simeq \phi^! \Mkr$.
We will prove in this section, using arguments of \cite{huyghe}, that the functors $\phi^!$ and $\phi_*$ induce an equivalence of categories between coherent $\Fkr$-modules and coherent $\Fkrp$-modules.
The transition sheaves homomorphisms $\widehat{\mathcal{D}}^{(0)}_{\X', k+1, \Q} \hookrightarrow \Dkqp$, $\F_{\X' , k+1 , r} \hookrightarrow \Fkrp$ and $\M_{\X' , k+1} \to \M_{\X' , k}$ induce a transition map $\M_{\X' , k+1 , r} \to \Mkrp$.
We define the coadmissible $\Firp$-module
\[ \Mrp := \varprojlim_{k \geq r} \Mkrp .\]
The functors $\phi^!$ for coherent $\Fkr$-modules extends to a functor from coadmissible $\Fir$-modules to coadmissible $\Firp$-modules: if $\Mr = \varprojlim_{k \geq r} \Mkr$ with $r > \kp$, then we define
\[ \phi^! \Mr := \varprojlim_{k \geq r} \phi^! \Mkr .\]
For any congruence level $k \geq r+1$, the homomorphisms $j_{k , r} : \Fkrp \to \F_{\X' , k , r+1}$ induce $\Fkrp$-linear homorphisms
\[ j_{k,r} \otimes \id : \Mkrp \simeq \Fkrp \otimes_{\Dkqp} \M_{\X' , k} \to \M_{\X' , k ,r+1} \simeq \F_{\X' , k,r+1} \otimes_{\Dkqp} \M_{\X' , k} \]
that commute with the transition homomorphisms over $k$.
Passing to the projective limit over $k$ provides a continuous $\Firp$-linear sheaves homomorphism $\Mrp \to \M_{\X' , \infty , r+1}$.

\begin{prop}\label{prop2.7}
Let $ \Mr = \varprojlim_k \Mkr$ be a coadmissible $\Fir$-module with $r > \kp$ and $\Mrp = \phi^! \Mr$.
Then for any congruence level $k \geq r$, there are two natural isomorphisms $\phi_* \Mkrp = \Mkr$ and $\phi_* \Mrp = \Mr$.
\end{prop}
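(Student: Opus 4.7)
The plan is to reduce the statement to the sheaf-level computation of Proposition \ref{prop2.5}, namely that $\phi_*\Fkrp \simeq \Fkr$ and $R^j\phi_*\Fkrp = 0$ for $j \geq 1$, by exploiting the local finite presentation of coherent microlocalization modules.

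First I establish the isomorphism $\phi_*\Mkrp \simeq \Mkr$ for a fixed $k \geq r$. By Proposition \ref{prop2.4} and its corollary, the sheaf $\Mkr$ is locally, on affine open subsets $V$ of the basis $\mathcal{U}$, of finite presentation as an $\Fkr$-module, so on such a $V$ I fix a presentation
\[
\Fkr^{\oplus m}|_V \longrightarrow \Fkr^{\oplus n}|_V \longrightarrow \Mkr|_V \longrightarrow 0 .
\]
Applying $\phi^!$ and using $\phi^!\Fkr \simeq \Fkrp$ yields the corresponding right exact sequence on $\phi^{-1}(V)$:
\[
\Fkrp^{\oplus m}|_{\phi^{-1}(V)} \longrightarrow \Fkrp^{\oplus n}|_{\phi^{-1}(V)} \longrightarrow \Mkrp|_{\phi^{-1}(V)} \longrightarrow 0 .
\]

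Now I apply $\phi_*$. Splitting the above into two short exact sequences and invoking the long exact sequences of higher direct images, the vanishing $R^j\phi_*\Fkrp^{\oplus n} = 0$ of Proposition \ref{prop2.5}, together with $R^j\phi_* = 0$ for $j \geq 2$ on quasi-coherent sheaves (fibers of $\phi$ having dimension at most one), propagates through the presentation and forces $R^j\phi_*\Mkrp = 0$ for $j \geq 1$ and the right exactness of $\phi_*$ on the displayed sequence. Combined with $\phi_*\Fkrp \simeq \Fkr$ this gives
\[
\phi_*\Mkrp|_V \simeq \mathrm{coker}\bigl( \Fkr^{\oplus m}|_V \to \Fkr^{\oplus n}|_V \bigr) \simeq \Mkr|_V
\]
naturally, and these local isomorphisms glue to the desired sheaf isomorphism $\phi_*\Mkrp \simeq \Mkr$ on $\X$.

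For the second isomorphism I pass to the projective limit over $k$. Since $\phi_*$ commutes with projective limits of sheaves (being a right adjoint, or equivalently because sections over open sets commute with limits), for any affine $V \in \mathcal{U}$ one has
\[
(\phi_*\Mrp)(V) = \Mrp(\phi^{-1}(V)) = \varprojlim_{k \geq r} \Mkrp(\phi^{-1}(V)) = \varprojlim_{k \geq r} (\phi_*\Mkrp)(V) \simeq \varprojlim_{k \geq r} \Mkr(V) = \Mr(V) ,
\]
the penultimate isomorphism using the first part. Naturality in $k$ of the level-wise isomorphisms ensures compatibility with the transition maps, yielding $\phi_*\Mrp \simeq \Mr$.

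The principal obstacle is the first step: one must transport the chosen local finite presentation for $\Mkr$ to one for $\Mkrp$ via $\phi^!$, and verify that vanishing of higher direct images transfers from free microlocalization modules to the cokernel. Both points rest squarely on Proposition \ref{prop2.5} together with the finite presentation assertion of the corollary. Once these ingredients are in place, the remainder is a routine diagram chase and limit computation.
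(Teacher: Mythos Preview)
Your proof is correct and follows essentially the same approach as the paper: both reduce to local finite presentations of $\Mkr$, transport them via $\phi^!$, use the vanishing $R^j\phi_*\Fkrp=0$ from Proposition~\ref{prop2.5} together with the one-dimensionality of $\X'$ to kill higher direct images, and then pass to the projective limit by the same section-wise computation. The only cosmetic difference is that the paper first constructs the global comparison map $\varphi_{\Mkr}:\Mkr\to\phi_*\Mkrp$ by adjunction (via $m\mapsto 1\otimes m$) and then checks it is a local isomorphism, whereas you produce local isomorphisms directly from the presentation and invoke naturality to glue; both are equivalent formulations of the same argument.
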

\begin{proof}
To prove $\phi_* \Mkrp = \Mkr$, we adapt the proof of \cite[theorem 2.3.8]{huyghe} to our case.
We can assume that $\X$ is affine with a local coordinate such that $\Mkr$ is of finite presentation.
It follows that $\Mkrp =\phi^! \Mkr =  \Fkrp \otimes_{\phi^{-1} \Fkr} \phi^{-1} \Mkr$ is also of finite presentation.
As a consequence, this module admits a short exact sequence of the form $0 \to \Nn_{\X' , k, r} \to (\Fkrp)^\ell \to \Mkrp \to 0$ for some coherent $\Fkrp$-module $\Nn_{\X' , k,r}$.
We deduce from this that $R^j \phi_* \Mkrp = 0$ for $j > 0$.
Indeed, since $\X'$ has dimension one, the assertion is true for $j > 1$ by proposition \ref{prop2.5}.
Then, the short exact sequence above implies that $R^1 \phi_* \Mkrp = R^2 \phi_* \Nn_{\X' , k , r} = 0$.
In other words, the functor $\phi_*$ is exact for any $\Fkrp$-module coming from a coherent $\Fkr$-module.
We now consider the map
\[ \phi^{-1} \Mkr \to \Mkrp = \phi^! \Mkr =  \Fkrp \otimes_{\phi^{-1} \Fkr} \phi^{-1} \Mkr\]
given by $m \to 1 \otimes m$ which is linear with respect to the adjunction map $\phi^{-1} \Fkr \to \Fkrp$ coming from proposition \ref{prop2.5}.
Thus, by adjunction we get a natural homomorphism $\varphi_{\Mkr} : \Mkr \to \phi_* \Mkrp$ which is an isomorphism when $\Mkr = \Fkr$.
We end the proof by establishing that this map is always an isomorphism. It can be checked locally.
Let $(\Fkr)^\ell \to (\Fkr)^m \to \Mkr \to 0$ be a finite presentation of the coherent $\Fkr$-module $\Mkr$.
This provides a presentation $ (\Fkrp)^\ell \to (\Fkrp)^m \to \Mkrp \to 0$ for the module $\Mkrp$.
Since the functor $\phi_*$ is exact, proposition \ref{prop2.5} implies that the module $\phi_* \Mkrp$ admits the finite presentation $(\Fkr)^\ell \to (\Fkr)^m \to \phi_* \Mkrp \to 0$.
The map $\varphi_{\Mkr}$ induces the following commutative diagram:
\[ \xymatrix{
(\Fkr)^\ell \ar[r] \ar[d]_{\simeq} & (\Fkr)^m \ar[r] \ar[d]_{\simeq} & \Mkr \ar[r] \ar[d]_{\varphi_{\Mkr}}&  0 \\
(\Fkr)^\ell \ar[r] & (\Fkr)^m \ar[r] & \phi_* \Mkrp \ar[r] & 0 .
} \]
It follows that $\varphi_{\Mkr} : \Mkr \to \phi_* \Mkrp$ is an isomorphism.
Finally, for any open subset $V$ of $\mathcal{U}'$, one has
\[ \phi_* \Mrp(V)  = \Mrp(\phi^{-1}(V)) = \varprojlim_{k \geq r} \Mkrp(\phi^{-1}(V)) = \varprojlim_{k \geq r} \Mkr(V) = \Mr(V) . \]
Since $\mathcal{U}'$ is a basis of open subsets of $\X'$, we also get $\phi_* \Mrp = \Mr$.
\end{proof}

In fact, we have proved that $\phi_*$ induces an exact functor from the category of coherent $\Fkrp$-modules to the category of coherent $\Fkr$-modules.
We have the more precise following result.

\begin{cor}\label{cor3.2}
The exact functors $\phi_*$ and $\phi^!$ induce a quasi-inverse equivalence between the category of coherent $\Fkr$-modules and the category of coherent $\Fkrp$-modules.
\end{cor}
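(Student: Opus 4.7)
The plan is to derive the equivalence from Proposition \ref{prop2.7} together with a local finite-presentation argument for the reverse composition. Proposition \ref{prop2.7} already supplies the unit isomorphism $\Mkr \iso \phi_*\phi^!\Mkr$ for every coherent $\Fkr$-module $\Mkr$, which makes $\phi^!$ fully faithful and identifies $\phi_*\circ\phi^!$ with the identity on coherent $\Fkr$-modules. The work that remains is the essential surjectivity of $\phi^!$, or equivalently, that the counit morphism $\phi^!\phi_*\Nn \to \Nn$ is an isomorphism for every coherent $\Fkrp$-module $\Nn$.

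First I would dispose of the exactness and coherence-preservation issues. Exactness of $\phi_*$ on coherent $\Fkrp$-modules has essentially been observed immediately before the corollary, relying on the vanishing $R^j\phi_*\Fkrp = 0$ from Proposition \ref{prop2.5} together with a local finite presentation and a dimension argument. Exactness of $\phi^! = \Fkrp \otimes_{\phi^{-1}\Fkr} \phi^{-1}(-)$ amounts to flatness of $\Fkrp$ as a right $\phi^{-1}\Fkr$-module, which I would verify via the identification $\F_{X_i' , k , r} \simeq \phi^*\F_{X_i , k , r}$ given just before Proposition \ref{prop2.5}: the right-hand side is locally free over $\O_{X_i'}$ and hence flat over $\phi^{-1}\F_{X_i , k , r}$, and the $\varpi$-adic limit preserves flatness. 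The same input also ensures that $\phi_*\Nn$ is coherent for coherent $\Nn$, by applying the exact functor $\phi_*$ to a local finite presentation of $\Nn$ and using $\phi_*\Fkrp \simeq \Fkr$.

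The main step is then the counit isomorphism. Given a coherent $\Fkrp$-module $\Nn$, the corollary following Proposition \ref{prop2.4} provides locally a finite presentation $(\Fkrp)^\ell \to (\Fkrp)^m \to \Nn \to 0$. Applying the exact functor $\phi_*$ together with $\phi_*\Fkrp \simeq \Fkr$ yields a presentation $(\Fkr)^\ell \to (\Fkr)^m \to \phi_*\Nn \to 0$, and then applying the exact functor $\phi^!$ together with $\phi^!\Fkr \simeq \Fkrp$ yields $(\Fkrp)^\ell \to (\Fkrp)^m \to \phi^!\phi_*\Nn \to 0$. Naturality of the counit then produces a commutative diagram comparing this new presentation with the original one for $\Nn$, in which the vertical arrows on the two free summands are the identity. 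The five lemma forces the induced map $\phi^!\phi_*\Nn \to \Nn$ to be an isomorphism, completing the equivalence.

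The main obstacle I expect is not the final diagram chase but the preliminary flatness statement for $\Fkrp$ over $\phi^{-1}\Fkr$: one has to check that the schematic-level flatness descends correctly through the $\varpi$-adic completion used to reconstruct $\Fkrp$ from the reductions $\F_{X_i' , k , r}$, and that the natural identifications $\phi^!\Fkr \simeq \Fkrp$ and $\phi_*\Fkrp \simeq \Fkr$ really agree with the adjunction maps entering the five-lemma diagram. Once these technical compatibilities are established, the final argument is formally the mirror image of the end of the proof of Proposition \ref{prop2.7}.
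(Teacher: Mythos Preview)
Your proposal is correct and follows essentially the same route as the paper: use Proposition~\ref{prop2.7} for the unit $\Mkr \iso \phi_*\phi^!\Mkr$, and check the counit $\phi^!\phi_*\Nn \to \Nn$ is an isomorphism by reducing, via a local finite presentation and the five lemma, to the case $\Nn = \Fkrp$.

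One simplification is worth noting. You spend effort on the flatness of $\Fkrp$ over $\phi^{-1}\Fkr$ in order to have $\phi^!$ exact \emph{before} running the counit argument, and you flag this as the main technical obstacle. The paper avoids this entirely: in the five-lemma step one only needs \emph{right} exactness of $\phi^!$, which is automatic since $\phi^!$ is a tensor product. Full exactness of $\phi^!$ is then deduced \emph{a posteriori} from the equivalence (a quasi-inverse of an exact functor between abelian categories is exact). So the flatness verification, while not wrong, is unnecessary for the corollary itself.
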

\begin{proof}
Thanks to what we just said before and proposition \ref{prop2.7}, we only have to prove that the natural map $\phi^!\phi_* \Mkrp \to \Mkrp$ induced by $P \otimes m \to P \cdot m$ is an isomorphism.
We can assume that $\X$ is affine. Since $\phi_*$ is exact and $\Mkrp$ is coherent, we can reduce to the case of $\Mkrp = \Fkrp$.
This case is clear by definition of $\phi^!$.
Thus, the functors $\phi_*$ and $\phi^!$ are quasi-inverse equivalence between the categories of the corollary.
Since the functor $\phi_*$ is exact, so is $\phi^!$.
\end{proof}

In particular, this corollary implies that the sheaves $\Fkr$ and $\Fkrp$ have the same global sections in the sense that there is a natural isomorphism  $\Fkr(\X) \simeq \Fkrp(\X')$ of $K$-algebras.
In particular, these sheaves have the same global invertible elements.
This equivalence of categories will be useful to prove the equivalence between sub-holonomicity over $\X$ and over the admissible blow-up $\X'$.

\subsection{Weakly holonomic $\Dip$-modules}\label{section3.2}

Let $\phi : \X' \to \X$ be an admissible blow-up and $\Mp = \varprojlim_k \Mkp$ a coadmissible $\Dip$-module.
One can check, exactly as in \cite[section 2.3]{hallopeau3}, that for any integer $d \in \N$, the right $\Dip$-module $\Ext^d_{\Dip}(\Mp , \Dip) := \varprojlim_k \Ext^d_{\Dkqp}(\Mkp , \Dkqp)$ is coadmissible. 

\begin{definition}
A coadmissible $\Dip$-modules $\Mp$ is said to be weakly holonomic in the sense of Ardakov-Bode-Wadsley if for all $d \neq \dim\X = 1$, $\Ext^d_{\Dip}(\M , \Dip) = 0$.
\end{definition}

We recall that the exact functor $\phi_* : \mathcal{C}oh(\Dkqp) \overset{\sim}{\longrightarrow} \mathcal{C}oh(\Dkq)$ between left coherent modules induces an equivalence of category by \cite[theorem 2.3.8]{huyghe}.
An analogue result holds for right coherent modules.
As a consequence, this functor gives rise to a bijective map :
\begin{align*}
 \mathrm{Hom}_{\Dkqp}(\Mkp , \Dkqp) & \simeq  \mathrm{Hom}_{\phi_*  \Dkqp}(\phi_*  \Mkp , \phi_*  \Dkqp) \\
& \simeq \mathrm{Hom}_{\Dkq}(\Mk , \Dkq) \, .
\end{align*}
It follows that $\phi_* \left( \mathcal{H}om_{\Dkqp}(\Mkp , \Dkqp) \right) \simeq \mathcal{H}om_{\Dkq}(\Mk , \Dkq)$.
Indeed, this is enough to check it on global sections when $\X$ is affine ; this case comes from the previous isomorphism thanks to theorem A for coherent $\Dkqp$-modules and coherent $\Dkq$-modules.
Moreover, because the functor $\phi_*$ is exact for coherent modules and thanks to theorem B, this implies that for all integer $d \in \N$, there is a natural isomorphism $\phi_* \left( \Ext^d_{\Dkqp}(\Mkp , \Dkqp) \right) \simeq \Ext^d_{\Dkq}(\Mk , \Dkq)$.
Then, we obtain as before that
\begin{align*}
\phi_* \left( \Ext^d_{\Dip}(\Mp , \Dip) \right) & \simeq \varprojlim_k \phi_* \left( \Ext^d_{\Dkqp}(\Mkp , \Dkqp) \right) \\
& \simeq  \varprojlim_k \Ext^d_{\Dkq}(\Mk , \Dkq) \\
& \simeq \Ext^d_{\Di}(\M_\X , \Di) \, .
\end{align*}
To resume, we have proved the following.

\begin{prop}\label{propwh}
The equivalence of categories $\phi_* : \mathcal{C}_{\X'} \overset{\sim}{\longrightarrow} \mathcal{C}_\X$ given in \cite[proposition 3.1.12]{huyghe} induces an equivalence between weakly holonomic coadmissible modules over $\X'$ and $\X$.
In particular, the category of weakly holonomic $\Dip$-modules is abelian thanks to \cite[corollary 2.14]{hallopeau3}.
\end{prop}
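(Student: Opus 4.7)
The proposition essentially follows by assembling the identifications already worked out just before the statement. My plan is to show that the equivalence $\phi_* : \mathcal{C}_{\X'} \overset{\sim}{\longrightarrow} \mathcal{C}_\X$ preserves the vanishing condition defining weak holonomicity, and deduce that the second part of the claim follows from the general abelianness result of Ardakov--Bode--Wadsley recalled in \cite[corollary 2.14]{hallopeau3}.

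The key input is the natural isomorphism
\[ \phi_*\bigl( \Ext^d_{\Dip}(\Mp , \Dip) \bigr) \simeq \Ext^d_{\Di}(\M_\X , \Di) \]
for every $d \in \N$, which is precisely what was established in the paragraph preceding the statement by combining (i) the equivalence $\phi_* : \mathcal{C}oh(\Dkqp) \overset{\sim}{\longrightarrow} \mathcal{C}oh(\Dkq)$ of \cite[theorem 2.3.8]{huyghe} on each congruence level $k \geq \kp$, (ii) theorems $A$ and $B$ to pass between $\Ext$-sheaves and their global sections, and (iii) the exactness of $\phi_*$ on coherent modules to commute it with $\Ext^d$ and with the projective limit over $k$. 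From this isomorphism, if $\Mp \in \mathcal{C}_{\X'}$ corresponds to $\M_\X = \phi_* \Mp \in \mathcal{C}_\X$, then $\Ext^d_{\Dip}(\Mp , \Dip)=0$ for some $d$ if and only if $\phi_*$ applied to it vanishes (because $\phi_*$ is fully faithful on coadmissible modules, hence conservative), if and only if $\Ext^d_{\Di}(\M_\X , \Di) = 0$. Applying this for every $d \neq \dim\X = 1$ yields the equivalence of the weakly holonomic subcategories.

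For the second assertion, the weakly holonomic $\Di$-modules form an abelian subcategory of $\mathcal{C}_\X$ by \cite[corollary 2.14]{hallopeau3}. Since we have just shown that $\phi_*$ restricts to an equivalence between the full subcategory of weakly holonomic $\Dip$-modules and the full subcategory of weakly holonomic $\Di$-modules, the source inherits the abelian structure of the target; in particular kernels, cokernels and finite direct sums of weakly holonomic $\Dip$-modules are again weakly holonomic.

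I do not anticipate any serious obstacle: the only point that requires a minimum of care is the compatibility of $\phi_*$ with $\Ext^d$ and with $\varprojlim_k$. For the former, the exactness of $\phi_*$ on coherent $\Dkqp$-modules (already used in the proof of Proposition \ref{prop2.7}) together with theorem $A$ on each congruence level reduces the question to the already-known isomorphism on $\Hom$-sheaves. For the latter, the fact that the transition maps $\Ext^d_{\widehat{\D}^{(0)}_{\X' , k+1 , \Q}}(\M_{\X', k+1} , \widehat{\D}^{(0)}_{\X' , k+1 , \Q}) \to \Ext^d_{\Dkqp}(\Mkp , \Dkqp)$ are compatible with their counterparts over $\X$ via the equivalence $\phi_*$ reduces the commutation to the one already used in \cite[section 2.3]{hallopeau3} on $\X$.
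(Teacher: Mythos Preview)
Your proposal is correct and follows essentially the same approach as the paper: the proposition is a summary of the computation carried out in the paragraph immediately preceding it, namely the isomorphism $\phi_*\bigl(\Ext^d_{\Dip}(\Mp,\Dip)\bigr)\simeq\Ext^d_{\Di}(\M_\X,\Di)$ obtained levelwise via \cite[theorem 2.3.8]{huyghe}, theorems $A$ and $B$, and exactness of $\phi_*$, followed by passage to the projective limit. Your explicit mention of conservativity of $\phi_*$ to pass from vanishing of $\phi_*(\Ext^d)$ to vanishing of $\Ext^d$ itself is a small clarification the paper leaves implicit, but the logic is identical.
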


Recall that we define the tangent sheaf of $\X'$ by $\tau_{\X'} = \phi^* \tau_\X$ , where $\tau_\X$ is the relative tangent sheaf of $\X$ over $\V$.
This is a locally free $\O_{\X'}$-module of rank one.
We consider its dual $\omega_{\X'}$ and let us note $\omega_{\X' , \Q} := \omega_{\X'} \otimes_\V K$.
We deduce from \cite[proposition 2.1.1, part I]{virrion} that the functor $\bullet  \otimes_{\O_{\X , \Q}} \omega_{\X' , \Q}^{-1}$ induces an equivalence between the category of right coherent $\Dkqp$-modules and the category of left coherent $\Dkqp$-modules.
The dual of a coadmissible $\Dip$-module $\Mp  = \varprojlim_k \Mkp$ is defined by the left coadmissible $\Dip$-module
\[ \Mp^\vee := \varprojlim_k \left( \Ext^1_{\Dkqp}(\Mkp , \Dkqp) \otimes_{\O_{\X' , \Q}} \omega_{\X' , \Q}^{-1} \right) .\]
This definition makes sense thanks to \cite[lemma 2.2, part I]{virrion} proving that this module is really coadmissible.
Next result is a direct consequence of proposition \ref{propwh} and of \cite[proposition 2.17]{hallopeau3}.

\begin{prop}\label{propdual}
Let $\Mp  = \varprojlim_k \Mkp$ be a weakly holonomic coadmissible $\Dip$-module.
Then its dual $\Mp^\vee$ is weakly holonomic and we have an isomorphism $(\Mp^\vee)^\vee \simeq \Mp$.
\end{prop}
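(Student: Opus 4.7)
The strategy is to transport the result from $\X$ to $\X'$ via the equivalence of categories $\phi_* : \mathcal{C}_{\X'} \iso \mathcal{C}_\X$ of proposition \ref{propwh}, once we know that $\phi_*$ intertwines the two dual functors. The first step, therefore, is to establish a natural isomorphism $\phi_* (\Mp^d) \simeq (\phi_* \Mp)^d$ for a coadmissible $\Dip$-module $\Mp = \varprojlim_k \Mkp$.

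To build this isomorphism, I would combine two ingredients already present in section \ref{section3.2}. The computation carried out just before proposition \ref{propwh} shows that
\[ \phi_*\left( \Ext^1_{\Dkqp}(\Mkp, \Dkqp) \right) \simeq \Ext^1_{\Dkq}(\Mk, \Dkq) \]
as right coherent $\Dkq$-modules, and this is compatible with the transition maps over $k$. On the other hand, since $\tau_{\X'} = \phi^* \tau_\X$ we have $\omega_{\X', \Q} \simeq \phi^* \omega_{\X, \Q}$, so the projection formula yields
\[ \phi_*\left( \Ext^1_{\Dkqp}(\Mkp, \Dkqp) \otimes_{\O_{\X', \Q}} \omega_{\X', \Q}^{-1} \right) \simeq \phi_*\left( \Ext^1_{\Dkqp}(\Mkp, \Dkqp) \right) \otimes_{\O_{\X, \Q}} \omega_{\X, \Q}^{-1}. \]
Passing to the projective limit in $k$ (which commutes with $\phi_*$ on a quasi-compact space, as used throughout this section) gives the desired natural isomorphism $\phi_*(\Mp^d) \simeq (\phi_* \Mp)^d$.

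With this compatibility in hand, the proposition follows formally. Set $\M_\X := \phi_* \Mp$, which is weakly holonomic by proposition \ref{propwh}. Then by \cite[proposition 2.17]{hallopeau3}, the dual $\M_\X^d$ is weakly holonomic and $(\M_\X^d)^d \simeq \M_\X$. The isomorphism of the previous paragraph identifies $\phi_* \Mp^d$ with the weakly holonomic module $\M_\X^d$; applying proposition \ref{propwh} in the reverse direction, $\Mp^d$ is itself weakly holonomic. For biduality, apply the same isomorphism twice to obtain $\phi_* ((\Mp^d)^d) \simeq ((\phi_* \Mp)^d)^d \simeq \phi_* \Mp$, and then invoke the equivalence $\phi_*$ to conclude $(\Mp^d)^d \simeq \Mp$ in $\mathcal{C}_{\X'}$.

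The one genuine verification, and the only place where the argument is not purely categorical, is the projection-formula step: namely that tensoring by the invertible $\O_{\X', \Q}$-module $\omega_{\X', \Q}^{-1} = \phi^*\omega_{\X, \Q}^{-1}$ commutes with $\phi_*$ on the coherent sheaves $\Ext^1_{\Dkqp}(\Mkp, \Dkqp)$. This is standard once one knows these $\Ext$-sheaves are $\O_{\X', \Q}$-coherent and that $\phi_*$ is exact on them (both facts were used implicitly in section \ref{section3.2}), but it is the key technical point and the main thing worth double-checking before everything else slots into place.
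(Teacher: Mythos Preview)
Your proposal is correct and follows the same approach as the paper, which simply states that the result is a direct consequence of proposition \ref{propwh} and of \cite[proposition 2.17]{hallopeau3}. You have filled in the details behind that sentence, namely that $\phi_*$ intertwines the two dual functors via the $\Ext^1$ compatibility already established in section \ref{section3.2} together with the projection formula for the invertible sheaf $\omega_{\X',\Q}^{-1} \simeq \phi^*\omega_{\X,\Q}^{-1}$; this is exactly the content the paper leaves implicit.
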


To end this section, we assume for a while that $\X$ is affine with an \'etale coordinate.
Let $\M_k$ be a coherent $\Dkq$-module and $V \subset U$ be open subsets of $\X$.
The isomorphism $\M_k(V) \simeq \Dkq(V) \otimes_{\Dkq(U)} \M_k(U)$ established in \cite{huyghe} induces a linear isomorphism:
\[ \text{Hom}_{\Dkq(V)}(\M_k(V) , \Dkq(V)) \simeq \Dkq(V) \otimes_{\Dkq(U)} \text{Hom}_{\Dkq(U)}(\M_k(U) , \Dkq(U)).\]
As a consequence, one can check exactly as in the article \cite{ABW2} that the presheaves $U \mapsto \text{Hom}_{\Dkq(U)}(\M_k(U) , \Dkq(U))$ and $U \mapsto \text{Ext}_{\Dkq(U)}(\M(U) , \Dkq(U))$ define sheaves of right coherent $\Dkq$-module over the formal curve $\X$.
Let us denote this sheaf by $\text{Ext}^d_{\Dkq}(\Mk , \Dkq)$.
Moreover, it follows from theorem A and B for coherent $\Dkq$-modules that this sheaf coincides with $\Ext^d_{\Dkq}(\Mk , \Dkq)$. In other words, we have $\text{Ext}^d_{\Dkq}(\Mk , \Dkq) \simeq \Ext^d_{\Dkq}(\Mk , \Dkq)$ as right $\Dkq$-modules.
Similarly, the same works for coherent $\Dkqp$-modules.
Thus, we deduce that for any coadmissible $\Dip$-module $\Mp \simeq \varprojlim_{k \geq \kp} \Mkp$, there is a natural isomorphism
\begin{equation}\label{eqext}
\Ext^d_{\Dip}(\Mp , \Dip) \simeq \varprojlim_{k \geq \kp} \text{Ext}^d_{\Dkqp}(\Mkp , \Dkqp) .
\end{equation}

\subsection{Sub-holonomic $\Dip$-modules}\label{section3.3}

We always consider an admissible blow-up $\phi : \X' \to \X$ of $\X$ and let $\M_{\X'} = \varprojlim_{k \geq \kp} \M_{\X' , k}$ be a coadmissible $\Dip$-module.
We recall that for any integers $k \geq r > \kp$, the $\Fkrp$-module $\Mkrp = \Fkrp \otimes_{\Dkqp} \M_{\X' , k}$ is coherent and $\Mrp = \varprojlim_{k \geq r} \Mkrp$ is a coadmissible $\Firp$-module.
Let us set
\[ \supp (\Mrp) := \overline{ \cup_{k \geq r} \supp \Mkrp} .\]
Here the considered union is increasing.
Let $k > r \geq \kp$. We also verify, as in \cite[section 5.3]{hallopeau3}, that $\M_{\X' , k , r+1} \simeq \F_{\X' , k , r+1} \otimes_{\Fkrp} \Mkrp$ as $\F_{\X' , k , r+1}$-modules.
As a consequence, we get $\supp (\M_{\X' , k , r+1}) \subset \supp (\Mkrp)$ and $\supp (\M_{\X', \infty ,  r+1}) \subset \supp (\Mrp)$.
Let us note
\[ \suppi(\Mp) := \bigcap_{r > \kp} \supp(\Mrp) . \]
The infinite support $\suppi(\Mp)$ is a closed subset of $\X'$.
If $\dim(\suppi \Mp) = 0$, then $\suppi(\Mp)$ consists of a finite set of closed points in $\X'$.

\begin{prop}\label{prop3.4}
Let $\Mp = \varprojlim_{k \geq \kp} \Mkp$ be a coadmissible $\Dip$-module.
Then there exists a rank $r_0 > \kp$ such that for any $r \geq r_0$, $\suppi (\Mp) = \supp (\Mrp)$.
Moreover, if $\dim(\suppi \Mp) = 0$, then for all $r > r_0$, there exists a congruence level $k_r \geq r$ such that for any $k \geq k_r$, $\supp(\Mrp) = \supp(\Mkrp)$.
\end{prop}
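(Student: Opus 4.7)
The plan is to exploit the Noetherian property of $\X'$ (which holds because $\X'$ is quasi-compact) together with the two monotonicity properties already recorded before the statement. First, recall that for $k > r \geq \kp$ one has $\M_{\X', k, r+1} \simeq \F_{\X', k, r+1} \otimes_{\Fkrp} \Mkrp$, from which it follows that $\supp(\M_{\X', k, r+1}) \subset \supp(\Mkrp)$ and, after taking closures of increasing unions over $k$, $\supp(\M_{\X', \infty, r+1}) \subset \supp(\Mrp)$. Thus $(\supp(\Mrp))_{r > \kp}$ is a decreasing chain of closed subsets of the Noetherian topological space $\X'$. Such a chain must stabilize, so there exists $r_0 > \kp$ such that $\supp(\Mrp) = \supp(\M_{\X', \infty, r_0})$ for all $r \geq r_0$, and this common value is precisely $\suppi(\Mp) = \bigcap_{r > \kp} \supp(\Mrp)$.

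For the second assertion, fix $r > r_0$ and assume $\dim \suppi(\Mp) = 0$. By the first part, $\supp(\Mrp) = \suppi(\Mp)$ is a closed subset of dimension zero in the Noetherian space $\X'$, hence a finite set of closed points, say $\{x_1, \dots, x_s\}$. By definition, $\supp(\Mrp) = \overline{\bigcup_{k \geq r} \supp(\Mkrp)}$, and the sequence $(\supp(\Mkrp))_{k \geq r}$ is increasing. In particular, each $\supp(\Mkrp)$ is contained in the closure, i.e.\ in the finite set $\{x_1, \dots, x_s\}$. An increasing chain of subsets of a finite set must stabilize, so there exists $k_r \geq r$ such that for all $k \geq k_r$, $\supp(\Mkrp) = \bigcup_{k \geq r} \supp(\Mkrp)$. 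Since this union is already a finite union of closed points and hence closed, it coincides with its closure $\supp(\Mrp)$, which gives the desired equality $\supp(\Mkrp) = \supp(\Mrp)$.

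The main things to verify carefully are the two monotonicity statements: that $(\supp(\Mrp))_r$ is decreasing (which relies on the compatibility $\M_{\X', \infty, r+1} \simeq \F_{\X', \infty, r+1} \otimes_{\Firp} \Mrp$ obtained by passing to the projective limit over $k$ of the already-known coherent level statement) and that $(\supp(\Mkrp))_k$ is increasing (the analogue of the fact recalled in Section~\ref{section1.2} for $\X$, which transfers to $\X'$ through the equivalence of categories of Corollary~\ref{cor3.2} and the identity $\phi_*\Mkrp = \Mkr$ of Proposition~\ref{prop2.7}). Once these are in place, everything reduces to elementary topological bookkeeping in a Noetherian space; no real obstacle remains beyond checking that the arguments of \cite{hallopeau3} transport verbatim to the admissible blow-up $\X'$.
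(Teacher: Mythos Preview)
Your proof is correct and follows essentially the same approach as the paper: use that $\X'$ is Noetherian to stabilize the decreasing chain $(\supp(\Mrp))_r$, and then, when the stable value is finite, stabilize the increasing chain $(\supp(\Mkrp))_k$ inside that finite set and observe that the resulting union is already closed. Your third paragraph is unnecessary here, since both monotonicity statements are established in the text immediately preceding the proposition (and do not require invoking Proposition~\ref{prop2.7} or Corollary~\ref{cor3.2}).
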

\begin{proof}
Since $\X'$ is a Noetherian topological space, the decreasing sequence of support $(\supp (\Mrp))_{\X' \in \G'}$ is stationary.
Thus, $\suppi(\Mp) = \supp (\Mrp)$ for $r$ large enough.
We now assume that $\dim(\suppi(\Mp)) = 0$. Then $\suppi(\Mp) = \supp(\Mrp)$ is a finite union of points.
Recall that the closed subset $\supp(\Mrp)$ of $\X'$ is the closure of the increasing union over $k$ of the supports $\supp (\Mkrp)$.
It follows that the sequence $(\supp \Mkrp)_{k \geq r}$ is stationary because $\supp(\Mrp)$ is finite.
Then, there exists a congruence level $k \geq r$ such that $\supp (\Mkrp) = \cup_{k \geq r} \supp (\Mkrp)$.
As a consequence, the union $\cup_{k \geq r} \supp (\Mkrp)$ is closed and $\supp (\Mrp) = \overline{ \cup_{k \geq r} \supp (\Mkrp)} = \supp(\Mkrp)$.
\end{proof}

We define as in \cite[section 5.3]{hallopeau3} a characteristic variety $\Char(\Mp)$ for any coadmissible $\Dip$ module $\Mp$ as a closed subset of the cotangent space $T^*\X'$ of $\X'$.
Let just say few words about this construction. We can associate to the microlocalization sheaves $\Fkrp$ analogue sheaves $\Fkrp^*$ at the level of the cotangent space $T^* \X' := \widehat{\Spec} \left( \mathrm{Sym}(\tau_{\X'}) \right)$ of $\X'$.
We recall that $\tau_{\X'} = \phi^* \tau_\X$ is a locally free $\O_{\X' ,\Q}$-module of rank one.
Let us note $\pi' : T^*\X' \to \X'$ the canonical projection which is an open map and $s : \X' \to T^*\X'$ the zero section.
For $k \geq r > \kp$ and any open subset $V$ of $T^*\X'$, we define the sheaf $\Fkrp^*$ by
\[
\Fkrp^*(V) := 
\begin{cases}
\Fkrp( \pi'(V)) ~~~~~~ \mathrm{if}~~V \cap s(\X') = \emptyset  \\
\Dkqp(\pi'(V)) \hspace{0.62cm} \mathrm{if}~~V \cap s(\X') \neq \emptyset.
\end{cases}
\]
We also form the sheaf $\Firp^* := \varprojlim_{k > r} \Fkrp^*$ on $T^* \X'$.
We associate to any coherent $\Dkqp$-module $\Mkp$ the coherent $\Fkrp^*$-module
\[ \Mkrp^* := \Fkrp^* \otimes_{\pi^{'-1} (\Dkqp)} \pi^{'-1} (\Mkp) .\]
Finally, we associate to a coadmissible $\Dip$-modules $\Mp = \varprojlim_{k \geq \kp} \Mkp$ the coadmissible modules $\Mrp^* := \varprojlim_{k \geq r} \Mkrp^*$.
Again, one can prove that the sequence of supports $(\supp(\Mrp^*))_r$ is decreasing.

\begin{definition}
The characteristic variety of a coadmissible $\Dip$-module $\Mp$ is defined as the following closed subset of $T^*\X'$:
\[ \Char(\Mp) := \bigcap_{r > \kp} \supp(\Mrp^*) \subset T^*\X' . \] 
\end{definition}

Thanks to the theorem \ref{theorem2.8} and working on irreducible components of $T^*\X'$, we prove as in \cite[proposition 5.19]{hallopeau3} the Bernstein inequality for this characteristic variety: if $\Mp \neq 0$, then all irreducible components of $\Char(\Mp)$ have dimension at least one.
If the dimension of $\Char(\Mp)$ is one, then $\Char(\Mp)$ is composed potentially of the horizontal irreducible component $\X'$ identified with the zero section of $T^*\X'$ and of vertical irreducible components whose x-axis are exactly the elements of the infinite support $\suppi (\Mp)$ of $\Mp$.

\begin{definition}
A coadmissible $\Dip$-module $\Mp$ is said to be sub-holonomic if the dimension of its characteristic variety $\Char(\Mp)$ is less or equal to one, or equivalently if the dimension of its infinite support $\suppi (\Mp)$ is zero.
\end{definition}

\begin{example}
Theorem \ref{theorem2.8} implies that any coadmissible $\Dip$-module locally of the form $\Dip / P$, with $P$ a finite differential operator, is sub-holonomic.
\end{example}

Proposition \ref{prop3.4} and the fact that $\Fkrp$ is flat over $\Dkqp$ (proposition \ref{prop2.4}) imply the following result establishing that the category of sub-holonomic $\Dip$-modules is abelian.

\begin{prop}\label{prop3.4b}
Let $0 \to \Nn_{\X'} \to \Mp \to \L_{\X'} \to 0$ be an exact sequence of coadmissible $\Dip$-modules. Then $\suppi(\Mp) = \suppi(\Nn_{\X'}) \cup \suppi(\L_{\X'})$ and $\Char(\Mp) = \Char(\Nn_{\X'}) \cup \Char(\L_{\X'})$.
As a consequence, the module $\Mp$ is sub-holonomic if and only if $\Nn_{\X'}$ and $\L_{\X'}$ are sub-holonomic.
In particular, the category of sub-holonomic coadmissible $\Dip$-modules is abelian.
\end{prop}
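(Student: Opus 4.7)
The plan is to transfer the additivity of the ordinary support of coherent modules through the limits that define $\suppi$ and $\Char$, using the flatness results of Proposition \ref{prop2.4} together with the stabilization of the filtered supports given by Proposition \ref{prop3.4}. Concretely, since the categories of coadmissible $\Dip$-modules and of coherent $\Dkqp$-modules are abelian, the short exact sequence $0 \to \Nn_{\X'} \to \Mp \to \L_{\X'} \to 0$ produces, for each congruence level $k \geq \kp$, an exact sequence $0 \to \Nn_{\X', k} \to \Mkp \to \L_{\X', k} \to 0$ of coherent $\Dkqp$-modules. By the flatness of $\Fkrp$ over $\Dkqp$ (Proposition \ref{prop2.4}), tensoring preserves exactness, so I obtain an exact sequence of coherent $\Fkrp$-modules $0 \to \Nn_{\X', k, r} \to \Mkrp \to \L_{\X', k, r} \to 0$ for all $k \geq r > \kp$. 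For coherent modules, the usual support is additive under short exact sequences, hence $\supp(\Mkrp) = \supp(\Nn_{\X', k, r}) \cup \supp(\L_{\X', k, r})$.

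Next I pass to the closure of the increasing union over $k$. Since closure commutes with finite unions, I get
\[ \supp(\Mrp) = \overline{\bigcup_k \supp(\Mkrp)} = \overline{\bigcup_k \supp(\Nn_{\X', k, r})} \cup \overline{\bigcup_k \supp(\L_{\X', k, r})} = \supp(\Nn_{\X', r}) \cup \supp(\L_{\X', r}). \]
Passing to the intersection over $r$ is the delicate step, because in general intersection does not commute with finite union. This is where Proposition \ref{prop3.4} intervenes: there exists $r_0 > \kp$ such that for all $r \geq r_0$, one has simultaneously $\supp(\Mrp) = \suppi(\Mp)$, $\supp(\Nn_{\X', r}) = \suppi(\Nn_{\X'})$, and $\supp(\L_{\X', r}) = \suppi(\L_{\X'})$. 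Picking any such $r$ and applying the displayed equality yields $\suppi(\Mp) = \suppi(\Nn_{\X'}) \cup \suppi(\L_{\X'})$.

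For the characteristic variety, I repeat the argument on the cotangent space using the sheaves $\Fkrp^*$ on $T^*\X'$. Flatness still holds: away from the zero section, $\Fkrp^* = \Fkrp$ is flat over $\pi^{-1}\Dkqp$ by Proposition \ref{prop2.4}, while on a neighborhood of the zero section $\Fkrp^* = \Dkqp$ is trivially flat over itself. Tensoring gives an exact sequence of coherent $\Fkrp^*$-modules, additivity of support yields $\supp(\Mkrp^*) = \supp(\Nn_{\X', k, r}^*) \cup \supp(\L_{\X', k, r}^*)$, and the same closure–intersection argument, combined with the analogue of Proposition \ref{prop3.4} for the cotangent supports proved in \cite{hallopeau3}, gives $\Char(\Mp) = \Char(\Nn_{\X'}) \cup \Char(\L_{\X'})$.

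From these two additivity statements, $\dim\Char(\Mp) \leq 1$ is equivalent to both $\dim\Char(\Nn_{\X'}) \leq 1$ and $\dim\Char(\L_{\X'}) \leq 1$, which establishes the equivalence of sub-holonomicity for $\Mp$ with that of $\Nn_{\X'}$ and $\L_{\X'}$. Since the ambient category $\mathcal{C}_{\X'}$ is abelian and the sub-holonomicity condition is preserved by subobjects, quotients, and extensions (by the stability just proved), the full subcategory of sub-holonomic coadmissible $\Dip$-modules is abelian. The main obstacle I anticipate is precisely the order in which one commutes the union over $k$, the closure, and the intersection over $r$; this is exactly the point where one must invoke the Noetherian stabilization of Proposition \ref{prop3.4} (both in $\X'$ and in $T^*\X'$) rather than trying to treat the limits purely formally.
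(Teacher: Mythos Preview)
Your proof is correct and follows precisely the approach the paper indicates: it states just before the proposition that ``Proposition \ref{prop3.4} and the fact that $\Fkrp$ is flat over $\Dkqp$ (proposition \ref{prop2.4}) imply the following result,'' and you have spelled out those details accurately, including the Noetherian stabilization needed to commute the intersection over $r$ with the finite union.
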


We denote by $\mathcal{SH}_{\X'}$ the abelian category of sub-hononomic $\Dip$-modules.
The structural sheaf $\O_{\X' , \Q} \simeq \phi^! \O_{\X , \Q}$ of $\X'$ tensorized by $K$ is a coadmissible $\Dip$-module thanks to \cite[proposition 3.3.1]{huyghe}. 
More generally, we know by proposition \ref{propconnections} saying that any coherent $\O_{\X , \Q}$-module endowed with an integrable connection is naturally a coadmissible $\Dip$-module.
We will prove in next section that sub-holonomic $\Dip$-modules are equivalent to sub-holonomic $\Di$-modules via the exact functor $\phi_*$ (where $\phi$ always denotes the admissible blow-up $\X' \to \X$) and that $\phi(\suppi(\Mp)) = \suppi(\phi_*\Mp)$ for any coadmissible $\Dip$-module $\Mp$.
This fact implies that for any integrable connection $\Mp$ over $\X'$, $\suppi(\Mp) = \emptyset$.
Indeed, since $\phi_* \O_{\X' , \Q} \simeq \O_{\X , \Q}$, the pushforward $\phi_* \Mp$ of $\Mp$ is a coherent $\O_{\X , \Q}$-module and thus an integrable connection.
But we know by \cite[proposition 6.8]{hallopeau3} that $\suppi(\phi_* \Mp) = \emptyset$.
It follows that $\phi(\suppi(\Mp)) = \suppi(\phi_* \Mp) = \emptyset$ and that $\suppi(\Mp) = \emptyset$ by surjectivity of $\phi$.
This condition is equivalent to have $\Char(\Mp) \subset s(\X') \subset T^*\X'$.
As a consequence, any integrable connection is sub-holonomic.
More generally, we will prove in next section that a coadmissible $\Dip$-module $\Mp$ is an integral connection if and only if $\suppi (\Mp) = \emptyset$.
The following lemma says that a coherent $\Dkqp$-module $\Mkp$ is a coherent $\O_{\X' , \Q}$-module as soon as $\supp(\Fkrp \otimes_{\Dkqp} \Mkp) = \emptyset$ for some integer $r \in \{ \kp , \dots , k \}$.
This result will be useful for proving that any sub-holonomic $\Dcap$-module is generically a locally free $\O_{\X_K}$-module of finite rank.

\begin{lemma}\label{lemma3.8}
Let $\Mkp$ be a coherent $\Dkqp$-module.
It there exists an open subset $V$ of $\X'$ such that $\supp (\Fkrp\otimes_{\Dkqp} \Mkp)\cap V = \emptyset$, then $(\Mkp)_{|V}$ is a coherent $\O_{V , \Q}$-module.
\end{lemma}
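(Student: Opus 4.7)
The strategy is to exploit the vanishing of $\Fkrp \otimes_{\Dkqp} \Mkp$ on $V$ in order to produce, locally on $V$, finite differential operators with unit leading coefficient annihilating a generating set of $\Mkp$. Once such annihilators are in hand, an induction using the Leibniz rule forces $\Mkp$ to be generated as an $\O_{\X',\Q}$-module by derivatives of bounded order of these generators.

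Work locally: pick an affine $U' \in \mathcal{U}'$ with $U' \subset V$ and set $A := \O_{\X', \Q}(U')$, $D := \Dkqp(U')$, $F := \Fkrp(U')$, $M := \Mkp(U')$. By coherence of $\Mkp$ and the local finite presentation on $\mathcal{U}'$ (the analogue for $\Dkqp$ of the corollary following proposition \ref{prop2.4}), it suffices to show $M$ is finitely generated over $A$. Pick $D$-generators $m_1, \ldots, m_s$ of $M$. The hypothesis $F \otimes_D M = 0$, together with the flatness of $F$ over $D$ (proposition \ref{prop2.4}), allows one to extract, after possibly shrinking $U'$, operators $P_i \in D$ with $P_i m_i = 0$ and each $P_i$ invertible in $F$. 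By theorem \ref{theorem2.8}, each $P_i$ is then a finite differential operator, say $P_i = \sum_{n=0}^{N_i} a_{i,n} \partial^n$, with dominant coefficient $a_{i, N_i} \in A^\times$.

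Define the finitely generated $A$-submodule $M_0 := \sum_{i=1}^s \sum_{j=0}^{N_i-1} A \cdot \partial^j m_i \subset M$. From $P_i m_i = 0$ and the invertibility of $a_{i, N_i}$ we obtain $\partial^{N_i} m_i \in M_0$, and a direct induction using $\partial(f \cdot \partial^j m_i) = \partial(f)\partial^j m_i + f\partial^{j+1} m_i$ propagates this to $\partial^n m_i \in M_0$ for all $n \geq 0$. In particular, $M_0$ is a $\partial$-stable $A$-submodule of $M$. Since $A$ is Noetherian and the coherence of $M$ endows it with a Banach $D$-module structure, the finitely generated submodule $M_0$ is closed in $M$, so the action on $m_i$ of any $Q = \sum_n b_n(\varpi^k\partial)^n \in D$, computed as a limit of partial sums, lies in $M_0$. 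Hence $M = D \cdot \{m_1, \ldots, m_s\} \subset M_0$, and $M = M_0$ is $A$-finite.

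The delicate step is extracting the annihilators $P_i$: the kernel of $M \to F \otimes_D M$ is well understood for Ore localizations but less so for the completed localization $F$ (which is controlled by the norm $\|\cdot\|_{k,r}$). Making this rigorous requires either passing through the intermediate Ore sublocalization $D[\partial^{-1}] \subset F$ and using compatibility with the $\varpi$-adic completion, or arguing directly via the explicit normal form for elements of $F$ (proposition \ref{prop2.3}) combined with the unit criterion (lemma \ref{lemma2.4} and theorem \ref{theorem2.8}). The more routine technicality is the convergence check in the last step, which reduces to Noetherianity of $A$ together with the continuity of the $D$-action on the coherent module $M$.
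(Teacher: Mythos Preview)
Your overall strategy --- extract annihilators invertible in the microlocalization and deduce $\O$-finiteness --- matches the paper's, but the decisive step is miscited. You invoke theorem \ref{theorem2.8} to conclude that each $P_i$ is a \emph{finite} differential operator with invertible top coefficient. That theorem characterizes invertibility of elements of $\Dip$ in the limit sheaf $\Fip = \varinjlim_r \Firp$, not invertibility of elements of $\Dkqp$ in the fixed-level sheaf $\Fkrp$. The relevant criterion here is lemma \ref{lemma2.4}: an operator $P = \sum_{n \geq 0} a_n (\varpi^k\partial)^n \in \Dkqp(U')$ is a unit in $\Fkrp(U')$ precisely when it has a \emph{unique} coefficient $a_s$ of maximal spectral norm with $a_s \in A^\times$. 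Nothing forces $P$ to be finite; it may carry an infinite tail of coefficients $a_n$ with $|a_n| < |a_s|$. Once finiteness fails, your induction collapses: the relation $P_i m_i = 0$ no longer expresses $\partial^{s} m_i$ as an $A$-combination of \emph{lower}-order derivatives alone, since terms $a_n (\varpi^k\partial)^n m_i$ with $n > s$ remain on the other side.

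The paper circumvents this by passing to the special fiber. After reducing to the cyclic case $\Mkp \simeq \Dkqp / \I$ and extracting (via \cite[lemma 3.16]{hallopeau2}) an operator $P \in \I(\X')$ with unique maximal coefficient $a_s \in A^\times$, one normalizes so that $|P|_k = 1$ and reduces modulo $\varpi$. Then $\bar P = \bar a_s \cdot \xi_k^s$ is a monomial (the infinite tail dies, as every other $a_n$ lies in $\m$), and because $k \geq 1$ the reduced algebra $\D_{X',k}^{(0)}(X') = \O_{X'}(X')[\xi_k]$ is \emph{commutative}. Hence $\D_{X',k}^{(0)}/\bar P$ is visibly free of rank $s$ over $\O_{X'}$, and $\varpi$-adic completeness together with $\varpi$-torsion-freeness lifts this to $\Dkqp/P$. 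This reduction-mod-$\varpi$ argument is what replaces both your Banach-closure step and your broken induction, and it is absent from your proposal.
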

\begin{proof}
We can assume that $\X' = V$ is affine endowed with a derivation coming from a local coordinate of $\X$. 
Let us suppose that the support $\supp (\Fkrp\otimes_{\Dkqp} \Mkp)$ is empty.
The $\Dkqp$-module $\Mkp$ is coherent so generated by global sections $e_1 , \dots e_\ell \in \Mkp(\X')$.
Since the coherent $\Fkrp$-module $\Fkrp\otimes_{\Dkqp} \Mkp$ is also generated by the section $e_1 , \dots , e_\ell$, we have $\supp(\Mkrp) = \cup_{i=1} ^\ell \supp(\Fkrp\cdot e_i) = \emptyset$.
Thus, we can reduce to the case $\Mkp = \Dkqp \cdot e \simeq \Dkqp / \I$
where $\I$ is a non-zero coherent ideal of $\Dkqp$.
The condition $\supp( \Fkrp / \Fkrp\cdot \I) = \emptyset$ implies that $\Fkrp\cdot \I = \Fkrp$.
We deduce from \cite[lemma 5.5]{hallopeau3} the existence of a differential operator $P = \sum_{n=0}^\infty a_n \cdot (\varpi^k \partial)^n \in \I(\X')$ admitting a unique coefficient $a_s$ of maximal spectral norm such that $a_s$ is invertible in $\O_{\X' , \Q}(\X')$.
To prove the lemma, it suffices to show that $\Dkqp /P$ is coherent over $\O_{\X', \Q}$ because we have a surjective map $\Dkqp / P \twoheadrightarrow \Mkp$.
In fact, we will prove a more precise result: $\Dkqp / P$ is a free $\O_{\X' , \Q}$-module of rank $s$.
Let $X' := \X' \otimes_\V \Spec(\kappa)$ be the special fiber of $\X'$ and $\D_{X' , k}^{(0)} := \Dkp \otimes_\V \kappa$.
We denote by $\xi_k$ the image of $\partial_k$ in $\D_{X' , k}^{(0)}(X')$.
Since $k \geq 1$, $\D_{X' , k}^{(0)}(X') = \O_{X'}(X')[\xi_k]$ is a commutative ring over $\O_{X'}(X')$.
We can assume that $|P|_k = 1$.
The coherent $\Dkp$-module $\Dkp / P$ is complete for the $\varpi$-adic topology and $\varpi$-torsion free.
Because $\Dkqp / P \simeq(\Dkp / P) \otimes_\V K$, it suffices to prove that $\overline{\Dkp / P} = \D_{X' , k}^{(0)} /  \bar{P}$ is a free $\O_{X'}$-module of rank $s$.
We have $\bar{P} = (P \mod \varpi) = \bar{a}_s \cdot \xi_k^s \in \D_{X' , k}^{(0)}(X')$ with $\bar{a}_s$ invertible in $\O_{X'}(X')$.
Since $\D_{X' , k}^{(0)}(X') = \O_{X'}(X')[\xi_k]$, it is clear that $\D_{X' , k}^{(0)} /  \bar{P} \simeq \O_{X'} \oplus \O_{X'} \cdot \xi_k \oplus \dots \oplus O_{X'} \cdot \xi_k ^{s-1}$ is free of rank $s$ over the sheaf $\O_{X'}$.
\end{proof}

Considering the smooth points of $\X'$, we get the following characterisation of sub-holonomic $\Dip$-modules.
It corresponds to \cite[propositions 6.6 and 6.8]{hallopeau3}.

\begin{lemma}\label{lemma 3.12}
Let $\Mp$ be a coadmissible $\Dip$-module and $V$ be the open subset composed of all the smooth points of $\X'$ inside $\X' \backslash \suppi(\Mp)$.
Then $(\Mp)_{V}$ is locally a free $\O_{V, \Q}$-module of finite rank.
In other words, any sub-holonomic $\Dip$-modules is generically an integrable connection.
\end{lemma}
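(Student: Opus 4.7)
The plan is to argue locally. Fix $x \in V$. Since $x \notin \suppi(\Mp)$ and $\X'$ is Noetherian, Proposition \ref{prop3.4} provides $r_0 > \kp$ and an affine open neighborhood $W$ of $x$ in $V$ equipped with a local coordinate (by smoothness and $\dim \X' = 1$) such that $W \cap \supp(\Mrp) = \emptyset$ for every $r \geq r_0$. Since $\supp(\Mrp) = \overline{\bigcup_{k \geq r} \supp(\Mkrp)}$ with the union increasing, this forces $W \cap \supp(\Mkrp) = \emptyset$ for every pair $k \geq r \geq r_0$. As $\Mkrp = \Fkrp \otimes_{\Dkqp} \Mkp$, Lemma \ref{lemma3.8} then gives that $(\Mkp)_{|W}$ is a coherent $(\O_{\X', \Q})_{|W}$-module for every $k \geq r_0$.

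After possibly shrinking $W$, the ring $\O_{\X', \Q}(W)$ is a principal ideal domain. A coherent module over such a ring endowed with an integrable connection is torsion-free, hence free of finite rank. So each $(\Mkp)(W)$ is free of finite rank, with $\partial$-action determined by a matrix $A_k$ over $\O_{\X', \Q}(W)$; in particular it is an integrable connection. To finish, I pass from the individual $(\Mkp)_{|W}$ to $\Mp_{|W} = \varprojlim_k (\Mkp)_{|W}$. By Proposition \ref{propconnections}, each integrable connection $(\Mkp)_{|W}$ defines a coadmissible $(\Dip)_{|W}$-module, and the argument of that proposition shows that this system is essentially constant in $k$ for $k$ above the threshold determined by the connection matrix. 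Combined with the defining formula $\Mkp \simeq \Dkqp \otimes_{\widehat{\mathcal{D}}^{(0)}_{\X', k+1, \Q}} \M_{\X', k+1}$, the transition maps in the coadmissible system $((\Mkp)_{|W})_k$ become isomorphisms for $k \gg 0$, yielding $\Mp_{|W} \simeq (\Mkp)_{|W}$ for $k$ large, locally free of finite rank over $(\O_{\X', \Q})_{|W}$.

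The main obstacle is this last compatibility step: controlling the coadmissible transitions uniformly in $k$. A clean way to handle it is to compare $\Mp_{|W}$ with the coadmissible $(\Dip)_{|W}$-module canonically associated to a single $(\Mkp)_{|W}$ via Proposition \ref{propconnections}. Both are coadmissible modules whose level-$k$ terms agree for all $k \geq k_0$, and since the projective limit of a coadmissible system only depends on its tail, they coincide, so their common projective limit is the underlying integrable connection.
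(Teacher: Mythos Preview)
Your reduction to Lemma~\ref{lemma3.8} is correct and is the right first move: for every $k \geq r_0$ the module $(\Mkp)_{|W}$ is $\O_{W,\Q}$-coherent, hence (over a PID, with a connection) free of some finite rank $n_k$. The difficulty, which you correctly isolate, is the passage to the projective limit.

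Your proposed resolution of this step is circular. You compare $\Mp_{|W}$ with the coadmissible module $\tilde{\M}$ attached by Proposition~\ref{propconnections} to the single connection $(\M_{\X',k_0})_{|W}$ and assert that ``both are coadmissible modules whose level-$k$ terms agree for all $k \geq k_0$''. But the level-$k$ term of $\tilde{\M}$ is $(\M_{\X',k_0})_{|W}$ itself, while that of $\Mp_{|W}$ is $(\Mkp)_{|W}$; saying these agree for $k\geq k_0$ is exactly the statement that the transition maps $(\Mkp)_{|W}\to(\M_{\X',k_0})_{|W}$ are isomorphisms, which is what you are trying to prove. Nothing in Proposition~\ref{propconnections} forces this: that proposition gives a threshold for \emph{one} fixed connection, not a uniform bound across the whole system. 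Concretely, a rank-one $\widehat{\D}^{(0)}_{\X',k+1,\Q}$-connection $N=\O_{W,\Q}$ with $\partial\cdot 1 = c\,\varpi^{-(k+1)}$ (for a unit $c$) satisfies $\Dkqp\otimes_{\widehat{\D}^{(0)}_{\X',k+1,\Q}}N=0$, since $\partial-c\,\varpi^{-(k+1)}$ becomes a unit in $\Dkqp$; so flatness alone does not prevent nonzero kernels, and one genuinely needs further input to rule such phenomena out for the system $((\Mkp)_{|W})_k$.

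The paper handles this by reduction rather than by redoing the argument: it observes that on the smooth locus $V$ one is back in the setting of a smooth formal curve (after a harmless shift of congruence level coming from the comparison of $\tau_{\X'}|_V$ with the intrinsic tangent sheaf of $V$), and then simply invokes \cite[propositions~6.6 and~6.8]{hallopeau3}, where the stabilisation of the system and the resulting local freeness are already established. The substantive work---showing that the ranks $n_k$ stabilise, e.g.\ by identifying them with the order $\Nb$ of a generator of the local annihilator ideal and proving that this order is eventually constant in $k$---lives in those cited propositions, and your write-up does not reproduce it.
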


Generically means that a sub-holonomic $\Dip$-module is locally a free $\O_{\X' , \Q}$-module of finite rank over some open dense subset of $\X'$.
We will prove the converse in next section.

\begin{definition}
Let $\Mp$ be a sub-holonomic $\Dip$-module and $W_{\Mp}$ the maximal open subset of $\X'$ such that $(\Mp)_{|W_{\Mp}}$ is locally a free $\O_{\X' , \Q}$-module of finite rank.
We define the horizontal multiplicity $m_0(\Mp) \in \N$ of $\Mp$ to be the maximal rank of the locally free $\O_{\X' , \Q}$-module $(\Mp)_{|W_{\Mp}}$.
\end{definition}

For a sub-holonomic $\Dip$-module $\Mp$, we also call $m_0(\Mp)$ the multiplicity of the horizontal component of its characteristic variety $\Char(\Mp)$.
Bernstein's inequality implies that the characteristic variety $\Char(\Mp)$ contains the horizontal component if and only if $m_0(\Mp) > 0$.
We immediately deduce from the the fact that $W_{\Mp}$ is dense in $\X'$ that the multiplicity $m_0$ is additive on short exact sequence : if $0 \to \Mp \to \Nn_{\X'} \to \L_{\X'} \to $ is an exact sequence of sub-holonomic $\Dip$-modules, then $m_0(\Nn_{\X'}) = m_0(\Mp) + m_0(\L_{\X'})$.
Since $\X'$ is not necessarily irreducible, let us emphazise that this is not yet clear that the rank of the locally free $\O_{\X', \Q}$-module $(\Mp)_{|W_{\Mp}}$ is everywhere the same.
In particular, the horizontal multiplicity of $\Mp$ is potentially not the same as the one of $\phi_* \Mp$.
However, we will prove later that these multiplicities over $\X'$ and $\X$ are really the same, ie that $m_0(\Mp) = m_0(\phi_* (\Mp))$.

\subsection{Equivalence of sub-holonomicity over $\X'$ and $\X$}\label{section3.4}

Let again $\phi : \X' \to \X$ be an admissible blow-up of $\X$, $\M_\X = \varprojlim_k \M_k$ a coadmissible $\Di$-module and $\Mp = \phi^! \M_\X = \varprojlim_k \Mkp$ the corresponding coadmissible $\Dip$-module.
We prove in this subsection the equality $\phi(\suppi(\Mp)) = \suppi(\M_\X)$ between infinite supports and the fact that $\Mp$ is sub-holonomic if and only if $\M_\X$ is.
In order to prove these results, we associate vertical multiplicities to the sub-holonomic module $\Mp$, using the ones of $\M_\X$ at the points of the support $\suppi(\M_\X)$ as defined in \cite[section 6.2]{hallopeau3}.
Let us start by two lemmas. We demonstrate in the first one that it is equivalent to ask the supports $\supp(\Mkr)$ and $\supp(\Mkrp)$ to be finite.

\begin{lemma}\label{lemma3.9}
Let $k \geq r > \kp$ and $\M_k$ be a coherent $\Dkq$-module. We note $\Mkr = \Fkr \otimes_{\Dkq} \M_k$ and $\Mkrp = \phi^! \Mkr$.
Then $\dim(\supp(\Mkrp)) = 0$ if and only if $\dim(\supp(\Mkr)) = 0$. In this case, $\phi(\supp(\Mkrp)) = \supp(\Mkr)$.
\end{lemma}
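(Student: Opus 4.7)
The core of the argument is the identification $\phi_*\Mkrp = \Mkr$ from Proposition \ref{prop2.7}, combined with the exact quasi-inverse equivalence $\phi^! \dashv \phi_*$ from Corollary \ref{cor3.2}. My plan is to first prove the support equality $\phi(\supp(\Mkrp)) = \supp(\Mkr)$ unconditionally, and then extract the dimension statements from it with some additional local input for the subtle direction.

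For the inclusion $\phi(\supp(\Mkrp)) \subseteq \supp(\Mkr)$, I would argue contrapositively: if $x \notin \supp(\Mkr)$, then $\Mkr$ vanishes on some open neighborhood $V$ of $x$; since $\phi^!$ is exact and compatible with restriction, $\Mkrp = \phi^!\Mkr$ vanishes on $\phi^{-1}(V)$, so no point above $x$ lies in $\supp(\Mkrp)$. For the reverse inclusion $\supp(\Mkr) \subseteq \phi(\supp(\Mkrp))$, I would use that an admissible blow-up $\phi$ is proper: if $\phi^{-1}(x) \cap \supp(\Mkrp) = \emptyset$, properness produces an open $V \ni x$ with $\phi^{-1}(V) \cap \supp(\Mkrp) = \emptyset$, whence $\Mkrp|_{\phi^{-1}(V)} = 0$; applying the exact functor $\phi_*$ gives $\Mkr|_V = 0$, contradicting $x \in \supp(\Mkr)$.

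From this equality, the direction $\dim(\supp(\Mkrp)) = 0 \Rightarrow \dim(\supp(\Mkr)) = 0$ is immediate: a finite set maps to a finite set. The converse requires more work, since $\supp(\Mkrp) \subseteq \phi^{-1}(\supp(\Mkr))$ and $\phi^{-1}$ of a point in the blow-up center is a 1-dimensional exceptional fiber. The strategy is to show that $\supp(\Mkrp)$ cannot contain a whole irreducible component of such a fiber. I would reduce to the local case where $\Mkr$ is supported at a single point $x_i \in \X$: then $\Mkr$ is (locally) annihilated by some power of the maximal ideal $\m_{x_i}$ in $\O_{\X, \Q}$, so $\Mkrp$ is annihilated by the pullback ideal on each chart $U_j$ of $\phi^{-1}$ of a neighborhood of $x_i$.

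The main obstacle is this last step. Here I would exploit the arithmetic structure of the admissible blow-up: since the blow-up ideal $\I$ contains $\varpi^{k_{\X'}}$, on a chart $U_j = \Spf(\bar{A}_j)$ one has a relation $f_j \cdot (\sum_\ell g_\ell t_\ell) = \varpi^{k_{\X'}}$, which is invertible in the $K$-algebra $\O_{\X', \Q}(U_j)$. Consequently, on each standard chart $U_j$, the generator $f_j$ of the blow-up ideal is either itself invertible in $\O_{\X', \Q}(U_j)$ (forcing $\m_{x_i} \cdot \O_{\X', \Q}(U_j)$ to be the whole ring and $\Mkrp|_{U_j} = 0$) or becomes a coordinate whose rigid vanishing locus is a finite set of points. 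Either way, $\supp(\Mkrp) \cap U_j$ is zero-dimensional; a finite union over the cover yields $\dim(\supp(\Mkrp)) = 0$. The support equality established earlier then finishes the ``in this case'' conclusion.
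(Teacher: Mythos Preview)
Your unconditional proof of the support equality $\phi(\supp(\Mkrp)) = \supp(\Mkr)$ via the exact quasi-inverse equivalence $(\phi_*, \phi^!)$ and closedness of $\phi$ is clean and correct; the paper only extracts this equality \emph{after} establishing finiteness, so your route is in fact tidier for that part, and the implication $\dim \supp(\Mkrp)=0 \Rightarrow \dim \supp(\Mkr)=0$ follows immediately.

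The gap is in the converse direction. Your key claim, that a coherent $\Fkr$-module supported at a single closed point $x_i$ is annihilated by some power of $\m_{x_i} \subset \O_{\X,\Q}$, is not justified and in general fails. That implication holds for modules finitely generated over $\O_{\X,\Q}$, but $\Mkr$ is only coherent over the much larger (non-commutative) sheaf $\Fkr$ and is typically of infinite rank over $\O_{\X,\Q}$. Concretely, take $\Mkr = \Fkr / P$ with $P$ invertible off $x_i$; there is no reason for any $y^N$ with $y\in\m_{x_i}$ to lie in $\Fkr \cdot P$. Your last paragraph then compounds the issue by identifying the blow-up ideal $\I$ with $\m_{x_i}$: the fact that $f_j$ becomes a unit in $\O_{\X',\Q}(U_j)$ (correct, since $\I \otimes_\V K$ is the unit ideal) says nothing about whether $\m_{x_i}\cdot\O_{\X',\Q}(U_j)$ is the unit ideal. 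In Example~\ref{example2.1} with $x_i$ the origin, on the chart $U_1 = \Spf(\V\langle t\rangle)$ with $x=\varpi t$ one has $\m_{x_i}\cdot\O_{\X',\Q}(U_1)=(t)$, not the unit ideal.

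The paper's argument for this direction is genuinely different and uses the microlocal structure rather than annihilator ideals. It first reduces (via holonomicity of $\Mk$) to a cyclic module and produces an operator $P \in \Fkr(\X)$ invertible in $\Fkr(\X\setminus\{x\})$; applying $\phi^!$ gives a surjection $\Fkrp/P \twoheadrightarrow \Mkrp$. The crucial step is then the explicit invertibility criterion of Lemma~\ref{lemma2.4}: writing $P=\sum \alpha_n(\varpi^k\partial)^n$, the norm conditions $(i)$ and $(iii)$ are insensitive to passing from $\X$ to $\X'$, so $P$ is invertible in $\Fkrp(V)$ precisely away from the zeros of the dominant coefficient $a_q \in \O_{\X',\Q}(\X')$. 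The finiteness of $\supp(\Fkrp/P)$ then follows because a nonzero section of $\O_{\X',\Q}$ has only finitely many zeros on each affine chart, together with quasi-compactness of $\X'$. That analytic finiteness input is what replaces your (unavailable) ideal-theoretic annihilation.
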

\begin{proof}
First, we assume that $\dim(\supp(\Mkrp)) = 0$, ie the support $\supp(\Mkrp)$ is finite.
Let us write $\supp ( \Mkrp) = \{ x_1 , \dots , x_s\}$.
If $i_{x_\ell}$ is the inclusion $\{ x_\ell \} \hookrightarrow \X'$, we have
\[ \Mkrp \simeq \bigoplus_{\ell = 1}^s (i_{x_\ell})_* (\Mkrp)_{x_\ell}. \]
Thanks to the proposition \ref{prop2.7}, we know that $\phi_* \Mkrp \simeq \Mkr$.
It follows that we have an isomorphism of $\Fkr$-modules
\[ \Mkr \simeq \bigoplus_{\ell = 1}^s \phi_* (i_{x_\ell})_* (\Mkrp)_{x_\ell} \simeq \bigoplus_{\ell = 1}^s  (j_{\phi(x_\ell)})_* (\Mkrp)_{x_\ell} \]
where $j_{\phi(x_\ell)} : \{ \phi(x_\ell) \} \hookrightarrow \X$ denotes the natural inclusion.
The structure of $\Fkr$-module for the right-hand term comes from the canonical homomorphisms of $K$-algebras $(\Fkr)_{\phi(x_\ell)} \simeq (\phi_* \Fkrp)_{\phi(x_\ell)} \to (\Fkrp)_{x_\ell}$ for all $\ell \in \{ 1 , \dots , s \}$.
As a consequence, we deduce that
\[ \phi(\supp(\Mkrp)) = \supp(\Mkr) = \{ \phi(x_1) , \dots , \phi(x_s) \} . \]
Let us suppose that $\dim(\supp(\Mkr)) = 0$. We can assume that $\X$ is affine with an \'etale coordinate such that $\supp(\Mkr) = \{ x \}$.
Since $\Char(\Mk) \subset \supp(\Mkr) = \{ x \}$, the coherent $\Dkq$-module $\Mk$ is holonomic.
As a consequence, there exists a non-zero coherent ideal $\I_k$ of $\Dkq$ such that $\Mk \simeq \Dkq / \I_k$.
One can check \cite{hallopeau1} for more details about holonomic $\Dkq$-modules.
It follows that $\Mkr \simeq \Fkr / \I$ with $\I = \Fkr \cdot \I_k$.
Up to reducing $\X$, the condition $\supp(\Mkr) = \{ x \}$ implies that there exists an element $P \in \I(\X)$ such that $P$ is invertible in $\Fkr(\X \backslash \{ x \})^\times$.
Otherwise, the ideal $\I$ should not be equal to $\Fkr$ outside $x$ and the support $\supp(\Mkr) $ should contain another point.
The surjective homomorphism $\Fkr / P \twoheadrightarrow \Mkr$ induces a surjective map $\Fkrp / P \twoheadrightarrow \Mkrp$, identifying $P$ with an element of $\Fkrp(\X') \simeq \Fkr(\X)$.
Thus, we are reduced to prove that the support of the coherent $\Fkrp$-module $\Fkrp / P$ is finite.
Let us write $P = \sum_{n = 0}^\infty a_n \cdot (\varpi^k \partial)^n + \sum_{n=1}^\infty a_{-n} \cdot (\varpi^r \partial)^{-n} \in \Fkr(\X)$, $W = \X \backslash \{ x \}$ and note
\[ \alpha_n = 
\begin{cases}
a_n \hspace{1.85cm} \text{if} ~~ n \geq 0 \\
\varpi^{n(r-k)}\cdot a_n ~~ \text{if} ~~ n < 0 .
\end{cases}\]
Since $P \in \Fkr(W)^\times$, we know by proposition \ref{lemma2.4} that $P$ verifies the following conditions:
\begin{enumerate}
\item[$(i)$]
there is a unique element $\alpha_n$ of maximal spectral norm, say $\alpha_q$ ;
\item[$(ii)$]
$\alpha_q \in \O_{\X , \Q}(W)^\times$, ie $a_q$ potentially admits a zero only at $x$ ;
\item[$(iii)$]
$|\alpha_{q-n}| \cdot p^{n(k-r)} < |\alpha_q|$ for all $n > 0$.
\end{enumerate}
Recall that by \cite[proposition 2.3.1]{huyghe}, there is an isomorphism between the global sections of $\O_{\X, \Q}$ and of $\O_{\X' , \Q}$.
Thus, we identify the coefficients $a_n \in \O_{\X , \Q}(\X)$ of $P$ with elements of $O_{\X' , \Q}(\X)$.
Since $\Fkrp(\X') \simeq \Fkr(\X)$ and the norm $\| \cdot \|_{k,r}$ does not depend on the choice of the chosen open subset, it follows that $P$ satisfies conditions $(i)$ and $(iii)$ in $\Fkrp(V)$ for any open subset $V$ in $\mathcal{U}'$.
Moreover, the function $a_q \in \O_{\X' , \Q}(V)$ has only a finite number numbers of zeros over $V$, see for example \cite[proposition 1.3.2]{garnier}.
It follows again by lemma \ref{lemma2.4} that for any affine open subset $V$ in $\mathcal{U}'$, $P$ is invertible in $\Fkrp(V)$ up to removing a finite number of closed points of $V$ (the zeros of $a_q$ in $V$).
In other words, the support $\supp(\Fkrp / P)\cap V$ coincides with the zeros of $a_q$ in $V$.
Finally, the finiteness of the support $\supp(\Fkrp / P)$ comes from the compactness of the admissible blow-up $\X'$.
\end{proof}

Next lemma simply says that we can deduce the relation $\phi(\suppi (\Mp)) = \suppi(\M_\X)$ from such equalities for large enough integers $k \geq r > \kp$.

\begin{lemma}\label{lemma3.10}
Let $\M_\X$ be a coadmissible $\Di$-module and $\Mp = \phi^! \M_\X$ the corresponding coadmissible $\Dip$-module.
Assume that for large enough integers $k \geq r > \kp$ we have $\phi(\supp(\Mkrp)) = \supp(\Mkr)$.
Then $ \phi(\suppi (\Mp)) = \suppi(\M_\X)$.
\end{lemma}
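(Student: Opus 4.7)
The plan is to reduce the statement about infinite supports to the analogous equality at the level of the sheaves $\Mrp$ and $\Mr$ for a single (large) $r$, and then to pass from finite congruence levels $k$ to the limit using the fact that $\phi$ is a proper (hence closed) morphism.

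First I would fix an integer $r > \kp$ large enough so that three things simultaneously hold: (i) by proposition \ref{prop3.4} one has $\suppi(\Mp) = \supp(\Mrp)$; (ii) by the analogous stabilisation on $\X$ recalled in section \ref{section1.2} one has $\suppi(\M_\X) = \supp(\Mr)$; (iii) the hypothesis $\phi(\supp(\Mkrp)) = \supp(\Mkr)$ holds for every congruence level $k \geq r$. For such $r$ the problem reduces to showing that $\phi(\supp(\Mrp)) = \supp(\Mr)$.

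The second step is to unfold the definitions
\[ \supp(\Mrp) = \overline{\bigcup_{k \geq r} \supp(\Mkrp)} \qquad\text{and}\qquad \supp(\Mr) = \overline{\bigcup_{k \geq r} \supp(\Mkr)}, \]
and to compute
\[ \phi(\supp(\Mrp)) = \phi\!\left(\overline{\bigcup_{k \geq r} \supp(\Mkrp)}\right) = \overline{\phi\!\left(\bigcup_{k \geq r} \supp(\Mkrp)\right)} = \overline{\bigcup_{k \geq r} \phi(\supp(\Mkrp))} = \overline{\bigcup_{k \geq r} \supp(\Mkr)} = \supp(\Mr), \]
where the penultimate equality uses the hypothesis applied at each $k \geq r$.

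The only delicate point is the identity $\phi(\overline{S}) = \overline{\phi(S)}$. Continuity of $\phi$ always gives the inclusion $\phi(\overline{S}) \subseteq \overline{\phi(S)}$, so the real content is the reverse inclusion, which needs $\phi$ to be a closed map. This is precisely the main obstacle I would flag, but it is not a genuine obstruction here: the admissible blow-up $\phi : \X' \to \X$ is proper since $\X$ is quasi-compact, hence closed, so $\phi(\overline{S})$ is already closed and contains $\phi(S)$, forcing $\phi(\overline{S}) \supseteq \overline{\phi(S)}$. Once this is noted, combining the displayed equality with (i) and (ii) gives $\phi(\suppi(\Mp)) = \phi(\supp(\Mrp)) = \supp(\Mr) = \suppi(\M_\X)$, which is the desired conclusion.
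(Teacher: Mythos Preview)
Your proof is correct and follows essentially the same route as the paper: reduce to a single large $r$ where both infinite supports coincide with $\supp(\Mrp)$ and $\supp(\Mr)$, then use the hypothesis levelwise together with the closedness of the admissible blow-up $\phi$ to commute $\phi$ with the closure of the increasing union. The paper phrases the threshold slightly differently (allowing a $k_r \geq r$ rather than requiring the hypothesis for all $k \geq r$), but since the sequences of supports are increasing this makes no difference to the argument.
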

\begin{proof}
Write $\M_\X = \varprojlim_{k \geq 0} \Mk$ and $\Mp = \phi^! \M_\X = \varprojlim_{k \geq \kp} \Mkp$.
Thanks to the proposition \ref{prop3.4}, there exists an integer $r_0 > \kp$ such that for any $r \geq r_0$,
\[ \suppi (\Mp) = \supp (\Mrp)  ~\, \text{and} ~   \suppi (\M_\X) = \supp (\Mr) . \]
Let us recall that by definition, $\supp (\Mrp) = \overline{ \cup_{k \geq r} \supp (\Mkrp)}$ and $\supp (\Mr) = \overline{ \cup_{k \geq r} \supp (\Mkr)}$.
Moreover, the sequences $(\supp (\Mkrp))_k$ and $(\supp (\Mkr))_k$ are increasing.
Thanks to the assumption, we can find some $r \geq r_0$ and $k_r \geq r$ such that for all congruence level $k \geq k_r$, we have $\phi(\supp(\Mkrp)) = \supp(\Mkr)$.
It follows that
\[\phi\left(\bigcup_{k \geq r}\supp(\Mkrp)\right) = \bigcup_{k \geq r} \supp(\Mkr) . \]
Since the admissible blow-up $\phi : \X' \to \X$ is a closed map, we deduce that
\begin{align*}
\phi(\supp(\Mrp)) & = \phi\left(\overline{\bigcup_{k \geq r}\supp(\Mkrp)}\right) = \overline{\phi\left(\bigcup_{k \geq r}\supp(\Mkrp)\right)} \\
& = \overline{\bigcup_{k \geq r} \supp(\Mkr)}  = \supp(\Mr) .
\end{align*}
Thus, $\phi(\suppi (\Mp)) = \phi(\supp (\Mrp)) = \supp (\Mr) = \suppi (\M_\X)$.
\end{proof}

These two lemmas immediately imply in next corollary that if the coadmissible $\Dip$-module $\Mp$ is sub-holonomic, then the coadmissible $\Di$-module $\M_\X$ is also sub-holonomic.
The other direction needs a little more work.
Indeed, let us assume that $\M_\X = \varprojlim_{k\geq0} \Mk$ is sub-holonomic.
Then all the supports $\supp(\Mkr)$ are the same and finite for $k$ and $r$ large enough.
We only know by lemma \ref{lemma3.9} that the supports $\supp(\Mkrp)$ are finite.
But at this point we do not know the sizes of these supports: they can tend toward infinity, which implies that the dimension of $\suppi(\Mp)$ is at least one.
We need finite vertical multiplicities for sub-holonomic $\Dip$-modules in order to control these sizes.

\begin{cor}\label{cor3.11}
Let $\Mp$ be a coadmissible $\Dip$-module.
If $\Mp$ is sub-holonomic, then $\M_\X = \phi_* \Mp$ is sub-holonomic and $\phi(\suppi(\Mp)) = \suppi(\M_\X)$.
As a consequence, $\Mp$ is weakly holonomic as soon as $\Mp$ is sub-holonomic.
\end{cor}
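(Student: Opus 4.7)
The plan is to stitch together the two preceding lemmas using the equivalence of categories between coadmissible $\Dip$-modules and coadmissible $\Di$-modules recalled in section 3.1. Write $\Mp = \varprojlim_{k \geq \kp} \Mkp$ and set $\M_\X := \phi_* \Mp = \varprojlim_k \M_k$ with $\M_k := \phi_* \Mkp$. By the quasi-inverse equivalence, $\Mp \simeq \phi^! \M_\X$ and $\Mkp \simeq \phi^! \M_k$, so applying the microlocalization functor gives $\Mkrp \simeq \phi^! \Mkr$, which is exactly the setting in which Lemma \ref{lemma3.9} applies.

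Next I would exploit Proposition \ref{prop3.4}: the sub-holonomicity hypothesis $\dim \suppi(\Mp) = 0$ gives an $r_0 > \kp$ such that $\suppi(\Mp) = \supp(\Mrp)$ for $r \geq r_0$, and a further integer $k_r \geq r$ such that for $k \geq k_r$, $\supp(\Mkrp) = \supp(\Mrp) = \suppi(\Mp)$ is a finite set of closed points. In particular, $\dim \supp(\Mkrp) = 0$ for such $k,r$. Lemma \ref{lemma3.9} then yields both $\dim \supp(\Mkr) = 0$ and $\phi(\supp(\Mkrp)) = \supp(\Mkr)$. Feeding this last equality into Lemma \ref{lemma3.10} produces the desired identity $\phi(\suppi(\Mp)) = \suppi(\M_\X)$.

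Since $\suppi(\M_\X)$ is thereby the set-theoretic image under $\phi$ of a finite set, it is itself finite, hence zero-dimensional, so $\M_\X$ is sub-holonomic on $\X$ by definition. Finally, for the weak holonomicity statement, I would invoke the fact recalled in section \ref{section1.2} that every sub-holonomic coadmissible $\Di$-module is weakly holonomic (via the vanishing of $\Ext^n_{\Dkq}(\M_k,\Dkq)$ for $n \neq 1$, already established in \cite{hallopeau3}). Combined with Proposition \ref{propwh}, which states that $\phi_*$ induces an equivalence between the weakly holonomic subcategories on $\X'$ and $\X$, this forces $\Mp \simeq \phi^! \M_\X$ to be weakly holonomic as well.

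The argument is essentially formal once the two preceding lemmas are in hand, so the main subtlety is not any single hard computation but bookkeeping: one must verify that the equivalence $\Mkp \simeq \phi^! \M_k$ really transports to $\Mkrp \simeq \phi^! \Mkr$ (so that Lemma \ref{lemma3.9} may be invoked), and that the stabilization of supports predicted by Proposition \ref{prop3.4} at the $\X'$-level simultaneously occurs at the $\X$-level for the pushforward module; both follow from the compatibility of $\phi^!$ with microlocalization established earlier, together with the Noetherianity of $\X$ and $\X'$.
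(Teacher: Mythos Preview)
Your proof is correct and follows essentially the same route as the paper: use Proposition~\ref{prop3.4} to get finiteness of $\supp(\Mkrp)$ for large $k\geq r$, apply Lemma~\ref{lemma3.9} to obtain $\phi(\supp(\Mkrp))=\supp(\Mkr)$, feed this into Lemma~\ref{lemma3.10} for the support identity, and conclude weak holonomicity via Proposition~\ref{propwh} together with the fact that sub-holonomic $\Di$-modules are weakly holonomic. Your extra bookkeeping paragraph about the identification $\Mkrp\simeq\phi^!\Mkr$ is a helpful clarification the paper leaves implicit.
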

\begin{proof}
We suppose that the coadmissible $\Dip$-module $\Mp$ is sub-holonomic.
In other words, the support $\suppi(\Mp)$ is finite. Let us write $\suppi(\Mp) = \{ x'_1 , \dots , x'_s \}$.
By proposition \ref{prop3.4}, we know that there exists a rank $r_0 > \kp$ such that for any $r \geq r_0$ and for $k \geq r$ large enough, $\suppi(\Mp) = \supp(\Mkrp)$.
In particular, the supports $\supp(\Mkrp)$ are finite.
Lemma \ref{lemma3.9} implies that $\phi(\supp(\Mkrp)) = \supp(\Mkr)$ and lemma \ref{lemma3.10} implies that $\suppi(\M_\X) = \phi(\suppi(\Mp))$.
Thus, the coadmissible $\Di$-module $\M_\X$ is also coadmissible.
Since we have an equivalence of categories between weakly holonomic $\Di$-modules and weakly holonomic $\Dip$-modules and any sub-holonomic $\Di$-module is weakly holonomic, it follows that $\Mp$ is also weakly holonomic.
\end{proof}

As before, we denote by $\vqp$ the commutative $K$-algebra composed of the elements of $\Fkrp(V)$ with coefficients in $K$:
\[ \vqp = \left\{ \sum_{n\in \N} \lambda_n\cdot (\varpi^k\partial)^n + \sum_{n\in \N^*} \lambda_{-n}\cdot (\varpi^r\partial)^{-n} \in \F_{k , r}(V)~: ~ \lambda_n \in K, ~ \lim_{n \to \pm \infty} \lambda_n = 0  \right\} . \]
Let $\Mp$ be a sub-holonomic coadmissible $\Dip$-module.
Then $\M_\X = \phi_* \Mp$ is sub-holonomic and we know by corollary \ref{cor3.11} that $\phi(\suppi (\Mp)) = \suppi(\M_\X)$.
We fix a closed point $x \in \suppi(\M_\X)$. Denote by $x_1' , \dots , x_s'$ the points of $\suppi (\Mp)$ above $x$, ie such that $\phi(x_i') = x$.
The proof of lemma \ref{lemma3.9} implies that for any $k \geq r \geq \kp +1$ large enough, we have an isomorphism of $(\Fkr)_x$-modules
\begin{equation}\label{eq1}
 (\Mkr)_x \simeq \bigoplus_{i=1}^s (\Mkrp)_{x'_i} .
\end{equation}
In particular, this is also an isomorphism of $\vqp$-modules.
For $k \geq r$ large enough, we have proved in \cite[section 6.2]{hallopeau3} that $(\Mkr)_x$ is a free $\vqp$-module of finite rank $m_x(\M_\X) \in \N$.
Since the commutative ring $\vqp$ is a principal ideal domain, the modules $(\Mkrp)_{x'_i}$ are also free over $\vqp$ of finite rank denoted by $rk_{x'_i}(\Mkrp)$.
Indeed, they are sub-modules of the free module $ (\Mkr)_x$.
We verify exactly as in \cite[section 6.2]{hallopeau3}  that these ranks do not depend on $k \geq r > \kp$.
The multiplicity $m_{x_i'}(\Mp)$ of $\Mp$ at $x_i'$ is then this common rank.
We deduce from isomorphism \ref{eq1} the following equality between multiplicities:
\begin{equation}\label{eq2}
m_x(\M_\X) = m_{x_1'}(\Mp) + \dots + m_{x_s'}(\Mp) .
\end{equation}
Moreover, we obtain as in \cite[section 6.2]{hallopeau3} that these vertical multiplicities are additive on short exact sequence of sub-holonomic coadmissible $\Dip$-modules.
Let us note
\[ m(\Mp) := \sum_{x' \in \suppi(\Mp)}m_{x}(\Mp) ~~ \text{and} ~~ m(\M_\X) := \sum_{x \in \suppi(\M_\X)}m_{x}(\M_\X) \in \N . \]
Relation \ref{eq2} implies that for any admissible blow-up $\phi : \X' \to \X$ of $\X$ and for any sub-holonomic $\Dip$-module $\Mp$, we have $m(\Mp) = m(\phi_*\M_\X)$.
We now prove the converse of corollary \ref{cor3.11} : if a coadmissible $\Di$-module $\M_\X$ is sub-holonomic, then the coadmissible module $\Mp =\phi^! \M_\X$ is also sub-holonomic.

\begin{theorem}\label{theorem3.14}
Let $\phi : \X' \to \X$ be an admissible blow-up. Then a coadmissible $\Di$-module $\M_\X$ is sub-holonomic if and only if the corresponding coadmissible $\Dip$-module $\Mp = \phi^! \M_\X$ is.
Moreover, the support $\suppi(\Mp)$ contains at most $m(\M_\X)$ points and we have multiplicities for the module $\Mp$ at the points of $\suppi(\Mp)$.
\end{theorem}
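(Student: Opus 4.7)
The non-trivial implication is the converse: assuming $\M_\X$ sub-holonomic, show that $\Mp = \phi^! \M_\X$ is sub-holonomic with $|\suppi(\Mp)| \leq m(\M_\X)$. The forward direction is already Corollary 3.11.

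The plan is to exploit the finite vertical multiplicities of $\M_\X$ to uniformly bound the sizes of the finite supports $\supp(\Mkrp)$. If $\M_\X$ is sub-holonomic, the Noetherian argument recalled in Section 1.2 produces integers $k \geq r > \kp$ sufficiently large so that $\supp(\Mkr) = \suppi(\M_\X)$ is a finite set of closed points of $\X$. Lemma 3.9 then forces $\supp(\Mkrp)$ to be finite as well, with $\phi(\supp(\Mkrp)) = \supp(\Mkr)$. Moreover, the decomposition used in the proof of Lemma 3.9 yields a $\vqp$-linear isomorphism
\[ (\Mkr)_x \simeq \bigoplus_{x' \in \phi^{-1}(x) \cap \supp(\Mkrp)} (\Mkrp)_{x'} \]
for each $x \in \supp(\Mkr)$.

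The main step is then a rank count over the principal ideal domain $\vqp$. By the freeness result recalled in Section 1.2 (based on \cite[section 6.2]{hallopeau3}), the module $(\Mkr)_x$ is free of rank $m_x(\M_\X)$ over $\vqp$ for $k \geq r$ large enough. Each direct summand $(\Mkrp)_{x'}$ is a non-zero submodule of this free module over a PID, hence itself free of strictly positive rank, and summing ranks yields $|\phi^{-1}(x) \cap \supp(\Mkrp)| \leq m_x(\M_\X)$. Globally, $|\supp(\Mkrp)| \leq m(\M_\X)$ for all $k$ large enough. Since $(\supp(\Mkrp))_k$ is an increasing sequence of finite sets of uniformly bounded cardinality, its union is already closed and finite, so $\suppi(\Mp) = \supp(\Mrp)$ is finite with at most $m(\M_\X)$ points.

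For the vertical multiplicities at each $x' \in \suppi(\Mp)$, the embedding of $(\Mkrp)_{x'}$ into the free $\vqp$-module $(\Mkr)_{\phi(x')}$ makes $(\Mkrp)_{x'}$ itself free, and the $k$-independence of its rank for $k \geq r$ large enough follows by exactly the same argument as in \cite[section 6.2]{hallopeau3}, producing well-defined multiplicities $m_{x'}(\Mp)$ that moreover satisfy the additivity relation $m_x(\M_\X) = \sum_{\phi(x') = x} m_{x'}(\Mp)$. The main obstacle in the whole argument is really this uniform bound on $|\supp(\Mkrp)|$ as $k$ varies; without it, the sizes could a priori tend to infinity, which would allow $\dim \suppi(\Mp) \geq 1$ even though each $\supp(\Mkrp)$ is finite.
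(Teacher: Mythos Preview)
Your proof is correct and follows essentially the same approach as the paper's: invoke Corollary~\ref{cor3.11} for the forward direction, then for the converse use Lemma~\ref{lemma3.9} to get finiteness of $\supp(\Mkrp)$ together with the stalk decomposition over $\vqp$, apply the PID rank count to uniformly bound $|\supp(\Mkrp)|$ by $m(\M_\X)$, and conclude that the increasing sequence of supports stabilizes. Your identification of the uniform bound as the crux of the argument matches the paper's own discussion preceding the theorem.
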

\begin{proof}
Thanks to corollary \ref{cor3.11}, we know that $\M_\X$ is sub-holonomic as soon as $\Mp$ is sub-holonomic. We now assume that $\M_\X$ is sub-holonomic.
There exists an integer $r_0 > \kp$ such that for any $r \geq r_0$, $\suppi(\M_\X) = \supp(\Mr)$ and $\suppi(\Mp) = \supp(\Mrp)$.
Since the coadmissible $\Di$-module $\M_\X$ is holonomic, there exists a congruence level $k_0 \geq r_0$ such that for any $k \geq k_0$, $\suppi(\M_\X) = \supp(\Mr) = \supp(\Mkr)$.
Let us write $\suppi(\M_\X) = \{ x_1 , \dots , x_s \}$. Recall that $m(\M_\X) := m_{x_1}(\M_\X) + \dots + m_{x_s}(\M_\X)$.
Thanks to lemma \ref{lemma3.9}, we know that $\supp(\Mkrp)$ is finite and $\phi(\supp(\Mkrp)) = \supp(\Mkr) = \{ x_1 , \dots , x_s \}$.
As explained in the paragraph above, we can associate to $\Mkrp$ positive ranks at the points of the support $\supp(\Mkrp)$.
Moreover, equation \ref{eq1} implies that if $\phi^{-1}(x_i) \cap \supp(\Mkrp) = \{ x'_1 , \dots x'_\ell \}$, then
\[ m_{x_i}(\M_\X) = rk_{x_i}(\Mkr) = rk_{x'_1}(\Mkrp) + \dots + rk_{x'_\ell}(\Mkrp) . \]
As a consequence, there is at most $m_{x_i}(\M_\X)$ points in the support $\supp(\Mkrp)$ above the point $x_i$.
Thus, the support $\supp(\Mkrp)$ contains at most $m(\M_\X)$ points.
Since the sequence of supports $(\supp(\Mkrp))_{k \geq k_0}$ is increasing, we deduce that this is stationary.
It follows that $\supp(\Mrp) = \supp(\Mkrp)$ for $k$ large enough.
In particular, the $\supp(\Mrp)$ is finite and the coadmissible $\Dip$-module $\Mp$ is sub-holonomic.
\end{proof}

We know that if both the coadmissible modules $\Mp$ and $\M_\X$ are sub-holonomic, then $\phi(\suppi(\Mp)) = \supp(\M_\X)$.
We now prove this equality in the general case, ie when these modules are not necessarily sub-holonomic.

\begin{prop}\label{prop3.13}
Let $\phi : \X' \to \X$ be an admissible blow-up, $\M_\X$ a coadmissible $\Di$-module and $\Mp = \phi^! \M_\X$ the corresponding coadmissible $\Dip$-module.
Then
\[ \phi(\suppi(\Mp)) = \supp(\M_\X) . \]
\end{prop}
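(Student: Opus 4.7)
The plan is to apply Lemma \ref{lemma3.10}, which reduces the proposition to establishing the finite-level equality $\phi(\supp(\Mkrp)) = \supp(\Mkr)$ for all sufficiently large $k \geq r > \kp$, where $\Mkrp = \phi^!\Mkr$. Thus the entire content of the proof lies in removing the sub-holonomicity hypothesis that was needed in Lemma \ref{lemma3.9}, so that the equality $\phi(\supp(\Mkrp)) = \supp(\Mkr)$ holds for arbitrary coherent $\Fkrp$-modules coming from $\Mkr$.

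For the inclusion $\phi(\supp(\Mkrp)) \subseteq \supp(\Mkr)$ I would use Corollary \ref{cor3.2}: the equivalence of categories gives an isomorphism $\Mkrp \simeq \phi^!\phi_*\Mkrp = \phi^! \Mkr = \Fkrp \otimes_{\phi^{-1}\Fkr} \phi^{-1}\Mkr$. Passing to stalks at any $x' \in \X'$ with image $x = \phi(x')$ produces
\[
(\Mkrp)_{x'} \simeq (\Fkrp)_{x'} \otimes_{(\Fkr)_x} (\Mkr)_x,
\]
so vanishing of $(\Mkr)_x$ forces vanishing of $(\Mkrp)_{x'}$.

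For the opposite inclusion $\supp(\Mkr) \subseteq \phi(\supp(\Mkrp))$ I would exploit the properness of the admissible blow-up. Suppose $x \in \X \setminus \phi(\supp(\Mkrp))$; since $\phi$ is proper, the image $\phi(\supp(\Mkrp))$ is closed in $\X$, so there exists an open neighborhood $U \in \mathcal{U}$ of $x$ with $\phi^{-1}(U)$ disjoint from $\supp(\Mkrp)$. Hence $\Mkrp|_{\phi^{-1}(U)} = 0$, and the isomorphism $\phi_*\Mkrp \simeq \Mkr$ of Proposition \ref{prop2.7} yields $\Mkr|_U = 0$, showing $x \notin \supp(\Mkr)$.

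With $\phi(\supp(\Mkrp)) = \supp(\Mkr)$ now available for all $k \geq r > \kp$ (in particular for the large indices picked out by Lemma \ref{lemma3.10}), the conclusion $\phi(\suppi(\Mp)) = \suppi(\M_\X)$ follows directly from that lemma, by taking closures of the increasing unions over $k$ and intersecting over $r$. The only mild subtlety is in the second inclusion, where one must convert the fiberwise disjointness $\phi^{-1}(x) \cap \supp(\Mkrp) = \emptyset$ into a neighborhood statement; this is precisely what properness of $\phi$ delivers, and is the one place in the argument where the proof genuinely goes beyond what Lemma \ref{lemma3.9} already provides.
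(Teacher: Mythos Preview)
Your proof is correct and takes a genuinely different route from the paper's. The paper does not prove the finite-level equality $\phi(\supp(\Mkrp)) = \supp(\Mkr)$ in general; instead it assumes $\M_\X$ is not sub-holonomic (the sub-holonomic case being Corollary~\ref{cor3.11}), observes that then $\suppi(\M_\X) = \X$ by irreducibility, and splits into two sub-cases according to whether the finite-level supports $\supp(\Mkr)$ remain finite for all $k$ (handled via Lemma~\ref{lemma3.9}) or eventually equal $\X$ (handled by restricting to the locus where $\phi$ is an isomorphism and using density). Your argument bypasses all of this: the stalk computation from $\Mkrp \simeq \Fkrp \otimes_{\phi^{-1}\Fkr} \phi^{-1}\Mkr$ gives one inclusion, and properness of $\phi$ together with $\phi_*\Mkrp \simeq \Mkr$ gives the other, yielding the finite-level equality unconditionally. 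This is both cleaner and strictly stronger than what the paper establishes at the finite level, and it feeds directly into Lemma~\ref{lemma3.10} without any case analysis.
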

\begin{proof}
We can assume that the coadmissible modules $\Mp$ and $\M_\X$ are not sub-holonomic.
Since the formal curve is irreducible, we have $\suppi(\M_\X) = \X$.
Le us fix an integer $r_0 > \kp$ such that for any $r \geq r_0$, $\suppi(\M_\X) = \supp(\Mr) = \X$ and $\suppi(\Mp) = \supp(\Mrp)$.
We recall that $\supp (\Mr) = \overline{ \cup_{k \geq r} \supp (\Mkr)}$.
Thus, there are two possible cases: $\supp(\Mkr)$ if finite for all congruence level $k \geq r$ (but the size of these supports is increasing toward infinity), or $\supp(\Mkr) = \X$ for $k$ large enough.
In the first case, we deduce from lemma \ref{lemma3.9} that $\phi(\supp(\Mkrp)) = \supp(\Mkr)$ and lemma \ref{lemma3.10} implies that $\phi(\suppi(\Mp)) = \suppi(\M_\X)$.
It remains to consider the situation $\supp(\Mkr) = \X$ for some congruence level $k \geq r$.
Let $\mathcal{Z}$ be the closed subset associated to the blow-up $\phi : \X' \to \X$ and $V = \X \backslash \mathcal{Z}$.
Then $\phi_{|V} : \phi^{-1}(V) \to V$ is an isomorphism. It follows that $(\Mkrp)_{|\phi^{-1}(V)} \simeq (\Mkr)_{|V}$ and $\supp(\Mkrp) \supset \phi^{-1}(V)$.
Since the open subset $V$ is dense in $\X$ (recall that the formal curve $\X$ is irreducible) and the blow-up $\phi : \X' \to \X$ is a closed map, we have $\X = \overline{V} = \phi\left( \overline{\phi^{-1}(V)}\right)$.
By definition, the support $\supp(\Mkrp)$ is closed.
Thus, $\overline{\phi^{-1}(V)} \subset \supp(\Mkrp)$ and $\phi(\supp(\Mkrp)) = \X$.
To conclude, we have proved that $\phi(\suppi(\Mp)) = \X = \suppi(\M_\X)$.
\end{proof}

We deduce from the equality $\phi(\suppi(\Mp)) = \suppi(\M_\X)$ the following proposition saying that integrable connections are exactly the coadmissible $\Dip$-modules whose infinite support is empty.

\begin{prop}\label{prop3.8}
Let $\Mp$ be a coadmissible $\Dip$-module. Then $\suppi(\Mp) = \emptyset$ if and only $\Mp$ is a coherent $\O_{\X' , \Q}$-module.
In this situation, $\Mp$ is also a locally free $\O_{\X' , \Q}$-module of finite rank.
\end{prop}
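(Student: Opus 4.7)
The forward direction is the argument already sketched in the paragraph preceding the proposition: if $\Mp$ is $\O_{\X',\Q}$-coherent then the generalisation of proposition~\ref{propconnections} makes it an integrable connection over $\Dip$; the pushforward $\phi_*\Mp$ is $\O_{\X,\Q}$-coherent (using $\phi_*\O_{\X',\Q}\simeq\O_{\X,\Q}$), hence an integrable connection on $\X$; \cite[proposition 6.8]{hallopeau3} gives $\suppi(\phi_*\Mp)=\emptyset$; and proposition~\ref{prop3.13} combined with the surjectivity of $\phi$ forces $\suppi(\Mp)=\emptyset$.

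For the converse, my first step is to descend the hypothesis $\suppi(\Mp)=\emptyset$ to each coherent level. By proposition~\ref{prop3.4}, there exists $r_0>\kp$ with $\supp(\Mrp)=\suppi(\Mp)=\emptyset$ for all $r\ge r_0$, and since $\supp(\Mrp)=\overline{\bigcup_{k\ge r}\supp(\Mkrp)}$ this forces $\supp(\Mkrp)=\emptyset$ for every $k\ge r\ge r_0$. Applying lemma~\ref{lemma3.8} globally on $\X'$ to each such $\Mkp$ then makes each $\Mkp$ an $\O_{\X',\Q}$-coherent sheaf.

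The heart of the argument is transporting this coherence through the projective limit, and the cleanest route goes through $\X$. Put $\M_\X:=\phi_*\Mp$, which is a coadmissible $\Di$-module with $\suppi(\M_\X)=\phi(\suppi(\Mp))=\emptyset$ by proposition~\ref{prop3.13}; the characterisation from \cite[section 6.1]{hallopeau3} (a coadmissible $\Di$-module is an integrable connection if and only if its infinite support is empty) identifies $\M_\X$ with an integrable connection on $\X$, in particular an $\O_{\X,\Q}$-coherent sheaf. The classical pullback $\phi^*\M_\X$ inherits a natural integrable connection and so defines a coadmissible $\Dip$-module via the generalisation of proposition~\ref{propconnections}; since $\phi_*\O_{\X',\Q}\simeq\O_{\X,\Q}$ and $\M_\X$ is $\O_{\X,\Q}$-coherent, the projection formula gives $\phi_*(\phi^*\M_\X)\simeq\M_\X$. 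Both $\Mp$ and $\phi^*\M_\X$ are therefore coadmissible $\Dip$-modules with isomorphic pushforwards, so the equivalence of categories $\phi^!\dashv\phi_*$ of \cite[proposition 3.1.12]{huyghe} yields $\Mp\simeq\phi^*\M_\X$, and consequently $\Mp$ is $\O_{\X',\Q}$-coherent.

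For the final assertion, $\M_\X$ is a coherent $\O_{\X,\Q}$-module on the smooth one-dimensional formal scheme $\X$ and is hence locally free of finite rank (the affine rings of $\X$ are principal ideal domains after inverting $\varpi$, as used already in the proof of proposition~\ref{propconnections}); pulling back under $\phi$ preserves local freeness and rank, so $\Mp\simeq\phi^*\M_\X$ is locally free of finite rank over $\O_{\X',\Q}$. The main obstacle I anticipate is precisely the identification $\Mp\simeq\phi^*\M_\X$ via the coadmissible equivalence: one must verify that $\phi^*\M_\X$ genuinely carries the coadmissible $\Dip$-structure coming from its connection and that its pushforward recovers $\M_\X$, which rests on the isomorphism $\phi_*\O_{\X',\Q}\simeq\O_{\X,\Q}$ from the basic theory of admissible blow-ups (used implicitly throughout section~\ref{section3}).
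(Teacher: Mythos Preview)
Your proof is correct and follows the same overall strategy as the paper: both directions reduce to $\X$ via proposition~\ref{prop3.13} and then invoke \cite[proposition~6.8]{hallopeau3} to identify $\M_\X=\phi_*\Mp$ as an integrable connection, locally free of some rank $n$.

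Two remarks on the comparison. First, your opening step in the converse (using proposition~\ref{prop3.4} and lemma~\ref{lemma3.8} to make each $\Mkp$ individually $\O_{\X',\Q}$-coherent) is never used afterwards; you may safely delete it. Second, the lifting back to $\X'$ is organised differently. The paper stays at finite level: since $\Mk$ is locally $\Dkq/P$ with $P$ a dominant unitary operator of order $n$, one has $\Mkp=\phi^!\Mk$ locally equal to $\Dkqp/P$, which is visibly free of rank $n$ over $\O_{\X',\Q}$; taking $\varprojlim_k$ gives the result. You instead argue categorically, identifying $\Mp$ with the $\O$-module pullback $\phi^*\M_\X$ equipped with its pullback connection, via the equivalence $\phi_*$. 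Your route is cleaner conceptually but requires the check you flag at the end: that the unit map $\M_\X\to\phi_*\phi^*\M_\X$ coming from the projection formula is $\Di$-linear and not merely $\O_{\X,\Q}$-linear. This is true (the pullback connection is defined precisely so that adjunction is connection-compatible), and once said, your argument is complete. The paper's level-by-level computation sidesteps this verification at the cost of a little more explicit algebra.
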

\begin{proof}
Let us note $\M_\X = \phi_* \Mp$ and assume first that $\Mp$ is a coherent $\O_{\X' , \Q}$-module.
It follows from \cite[proposition 2.3.1]{huyghe} that $\M_\X = \phi_* \Mp$ is a coherent $\O_{\X , \Q}$-module.
Thus, again by \cite[proposition 6.8]{hallopeau3}, we know that $\suppi(\M_\X) = \emptyset$.
Since $\phi(\suppi(\Mp)) = \suppi(\M_\X)$, we have $\suppi(\Mp) = \emptyset$.
Conversely, we suppose that $\suppi(\Mp) = \emptyset$.
Then we have $\phi(\suppi(\Mp)) = \phi(\emptyset) = \suppi(\M_\X) = \emptyset$.
In other words, $\M_\X$ is an integrable connection and $\M_\X$ is a coherent $\O_{\X , \Q}$-module always by \cite[proposition 6.8]{hallopeau3}.
More precisely, we know that $\M_\X$ is locally a free $\O_{\X , \Q}$-module of finite rank $n \in \N$.
This comes from the fact that $\Mk$, where $\M_\X = \varprojlim_k \Mk$, is locally of the form $\Dkq / P$ with $P$ a dominant and unitary finite differential operator of order $n$.
It follows that $\Mkp \simeq \phi^! \Mk$ is then locally of the form $\Dkqp / P$.
As a consequence, $\Mkp$ is locally a free $\O_{\X' , \Q}$-module of rank $n$ by \cite[lemma 3.23]{hallopeau1}.
Thus, $\Mp \simeq \varprojlim_k \Mkp$ is also locally a free $\O_{\X' , \Q}$-module of rank $n$ by \cite[lemma 4.13]{hallopeau1}.
\end{proof}

Le us emphasize the following important point: this proposition works only at the level of the whole formal curve $\X'$.
More precisely, let $\Mp$ be a sub-holonomic coadmissible $\Dip$-module and let us note $V = \X' \backslash \suppi(\Mp)$.
In general, the image $\phi(V)$ of $V$ is larger than $\X \backslash \phi(\suppi(\Mp))$ and the restriction of $\phi : \X' \to \X$ to $V$ does not induce a map $\phi_{|V} : V \to \X \backslash \phi(\suppi(\Mp))$.
So we cannot directly apply this proposition to the coadmissible $\D_{V , \infty}$-module $(\Mp)_{|V}$.
In particular, the fact that $(\Mp)_{|V}$ is a coherent $\O_{V , \Q}$-module is not clear.
Nevertheless, we proved in lemma \ref{lemma 3.12} that any sub-holonomic $\Dip$-module $\Mp$ is generically an integrable connection, which is sufficient.
We recall that the horizontal multiplicity $m_0(\Mp)$ of $\Mp$ is the maximal rank of this connection.

\begin{prop}
A coadmissible $\Dip$-module is sub-holonomic if and only it is generically an integrable connection.
\end{prop}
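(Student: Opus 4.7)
The statement is an equivalence, so the plan is to handle the two implications separately and of very different difficulty.

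The forward direction (sub-holonomic implies generically an integrable connection) requires no new work: this is exactly the content of Lemma \ref{lemma 3.12} above, so I would simply invoke it.

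For the reverse direction, the strategy is to descend along the blow-up $\phi : \X' \to \X$, reduce to the analogous statement for coadmissible $\Di$-modules recalled in Section \ref{section1.2}, and then lift back via Theorem \ref{theorem3.14}. More precisely, suppose $\Mp$ is locally a free $\O_{\X',\Q}$-module of finite rank on some dense open $W \subset \X'$. Let $\mathcal{Z} \subset \X$ denote the closed subset defining the admissible blow-up, so that $\phi$ restricts to an isomorphism $\phi^{-1}(\X \backslash \mathcal{Z}) \iso \X \backslash \mathcal{Z}$. Since $\X$ is irreducible and $\phi$ is proper surjective, $\X'$ is irreducible and $\phi^{-1}(\X \backslash \mathcal{Z})$ is a dense open of $\X'$. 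Hence $W' := W \cap \phi^{-1}(\X \backslash \mathcal{Z})$ is dense open in $\X'$, and its image $V := \phi(W')$ is a dense open subset of $\X$ with $\phi^{-1}(V) = W' \subset W$.

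On such a $V$, the base change formula gives $(\phi_* \Mp)_{|V} \simeq (\Mp)_{|\phi^{-1}(V)}$ under the identification of $\phi^{-1}(V)$ with $V$; in particular this is a locally free $\O_{\X,\Q}$-module of finite rank on $V$. Thus $\M_\X := \phi_* \Mp$ is a coadmissible $\Di$-module that is generically an integrable connection on $\X$. By the characterization of sub-holonomic $\Di$-modules recalled in Section \ref{section1.2} (established in \cite{hallopeau3}), $\M_\X$ is sub-holonomic. Applying Theorem \ref{theorem3.14}, we conclude that $\Mp = \phi^! \M_\X$ is sub-holonomic, which is what we wanted.

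I expect the only subtle point to be the verification that $W'$ is really a dense open of $\X'$ whose image is an honest open of $\X$ over which $\phi$ is an isomorphism; this is the reason I intersect with $\phi^{-1}(\X \backslash \mathcal{Z})$ rather than working with $W$ directly, since, as the authors emphasize just before the statement, the restriction $\phi_{|V}$ for a general dense open $V \subset \X'$ need not land in a complement of $\phi(\suppi(\Mp))$. Once this bookkeeping is done, everything else is a formal consequence of results already proved in the paper.
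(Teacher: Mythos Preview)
Your approach is essentially the same as the paper's: invoke Lemma \ref{lemma 3.12} for the forward direction, and for the converse pass through the isomorphism $\phi^{-1}(\X\backslash\mathcal{Z})\iso\X\backslash\mathcal{Z}$ to show $\M_\X=\phi_*\Mp$ is generically a connection on $\X$, then apply the Section \ref{section1.2} characterization and Theorem \ref{theorem3.14}.

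One small correction: your claim that $\X'$ is irreducible is false in general (the paper explicitly notes this in Section \ref{section3.3}, just before the definition of $m_0(\Mp)$); the special fiber of an admissible blow-up typically acquires exceptional components, so $\phi^{-1}(\X\backslash\mathcal{Z})$ need not be dense in $\X'$ and hence $W'$ need not be dense in $\X'$. Fortunately you do not need this: what matters is only that $W'$ is \emph{non-empty} (which follows since $W$ is dense in $\X'$ and $\phi^{-1}(\X\backslash\mathcal{Z})$ is a non-empty open), so that $V=\phi(W')$ is a non-empty open in the irreducible curve $\X$ and therefore dense there. With that adjustment your argument goes through and matches the paper's.
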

\begin{proof}
The direct implication comes from lemma \ref{lemma 3.12}.
Let $\Mp$ be a coadmissible $\Dip$-module which is generically an integrable connection.
Assume that $\phi : \X' \to \X$ is the blow-up with respect to a closed subset $\mathcal{Z}$ of $\X$ and note $U = \X \backslash \mathcal{Z}$.
Then $\phi_{|\phi^{-1}(U)} : \phi^{-1}(U) \to U$ is an isomorphism.
It follows hat $(\M_\X)_{|U} \simeq (\Mp)_{|\phi^{-1}(U)}$ is generically a connection.
In particular, this shows that $\M_\X$ is sub-holonomic.
Finally, theorem \ref{theorem3.14} implies that $\Mp$ is also sub-holonomic.
\end{proof}

Let $\Mp$ be again a sub-holonomic coadmissible $\Dip$-module.
The x-axis of any vertical irreducible component $C'$ of $\Char(\Mp)$ is a closed point $x' \in \suppi(\Mp)$ ; we also denote by $m_{C'}(\Mp)$ the multiplicity $m_{x'}(\Mp)$ of $\Mp$ associated to the point $x'$.
Let $\Irr(\Mp)$ be the set of vertical irreducible component of $\Char(\Mp)$.
The characteristic cycle of $\Mp$ is defined by the following formal sum:
\[ \CC(\Mp) := m_0(\Mp) \cdot \X' + \sum_{C' \in \Irr(\Mp)} m_{C'}(\Mp) \cdot C' . \]

\begin{prop}\label{prop3.20}
This characteristic cycle is additive on short exact sequences of sub-holonomic $\Dip$-modules.
Moreover, let $\Mp$ be a sub-holonomic $\Dip$-module.
Then $m_0(\Mp) = m_0(\phi_*\Mp)$ and  $\Mp$ is zero if and only if $\CC(\Mp) = 0$.
\end{prop}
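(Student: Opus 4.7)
The plan is to prove the three assertions, with the middle one, $m_0(\Mp) = m_0(\phi_* \Mp)$, handled first: this is what ensures the horizontal multiplicity is a uniform generic rank on $W_{\Mp}$ rather than merely a supremum of potentially different ranks on connected components, and once it is in hand the additivity of $m_0$ falls out.

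For $m_0(\Mp) = m_0(\phi_* \Mp)$, write $\M_\X := \phi_* \Mp$ and set $U := W_{\M_\X}$, the dense open of $\X$ on which $\M_\X$ is locally $\O_{\X, \Q}$-free of rank $m_0(\M_\X)$. Proposition \ref{prop3.13} gives $\phi^{-1}(U) \cap \suppi(\Mp) = \emptyset$. Locally on $U$ each $\M_{\X, k}$ is of the form $\Dkq / P$ with $P$ a finite differential operator of order $m_0(\M_\X)$ whose dominant coefficient is invertible; applying $\phi^{!}$ yields $\Mkp \simeq \Dkqp / P$ on $\phi^{-1}(U)$, which is locally $\O_{\X' , \Q}$-free of rank $m_0(\M_\X)$, and passing to the projective limit shows that $\Mp$ is locally $\O_{\X' , \Q}$-free of the same rank on $\phi^{-1}(U)$. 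Hence $\phi^{-1}(U) \subseteq W_{\Mp}$. Because $U$ is dense in the irreducible $\X$ and $\phi$ is proper and surjective, $\phi^{-1}(U)$ is dense in $\X'$; therefore every connected component of $W_{\Mp}$ meets $\phi^{-1}(U)$, and the local constancy of the rank of a locally free module forces it to be uniformly equal to $m_0(\M_\X)$ on all of $W_{\Mp}$, yielding $m_0(\Mp) = m_0(\M_\X)$.

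For additivity on a short exact sequence $0 \to \Nn_{\X'} \to \Mp \to \L_{\X'} \to 0$ of sub-holonomic $\Dip$-modules, the previous paragraph implies that restricting to $W_{\Mp} \cap W_{\Nn_{\X'}} \cap W_{\L_{\X'}}$ yields an exact sequence of locally free $\O_{\X' , \Q}$-modules with uniform ranks, so $m_0(\Mp) = m_0(\Nn_{\X'}) + m_0(\L_{\X'})$. For the vertical part, proposition \ref{prop3.4b} identifies the set of vertical components of $\Char(\Mp)$ with $\suppi(\Nn_{\X'}) \cup \suppi(\L_{\X'})$. Since $\Fkrp$ is flat over $\Dkqp$ by proposition \ref{prop2.4}, tensoring the sequence of coherent $\Dkqp$-modules with $\Fkrp \otimes_{\Dkqp} -$ and localizing at an arbitrary $x' \in \suppi(\Mp)$ gives a short exact sequence of free $\vqp$-modules; their ranks add, producing $m_{x'}(\Mp) = m_{x'}(\Nn_{\X'}) + m_{x'}(\L_{\X'})$, exactly as in \cite[section 6.2]{hallopeau3}. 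Combining these two additivities produces $\CC(\Mp) = \CC(\Nn_{\X'}) + \CC(\L_{\X'})$.

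The implication $\Mp = 0 \Rightarrow \CC(\Mp) = 0$ is immediate. Conversely, if $\CC(\Mp) = 0$, then $m_0(\Mp) = 0$ and $m_{x'}(\Mp) = 0$ for every $x' \in \suppi(\Mp)$; but by definition of the infinite support, for such $x'$ the stalk $(\Mkrp)_{x'}$ is non-zero for $k \geq r$ large enough, so its rank over the principal ideal domain $\vqp$ is strictly positive, a contradiction. Therefore $\suppi(\Mp) = \emptyset$, and proposition \ref{prop3.8} forces $\Mp$ to be locally $\O_{\X' , \Q}$-free of rank $m_0(\Mp) = 0$, i.e.\ $\Mp = 0$. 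The main obstacle is really the middle statement: one must exclude the possibility that some connected component of $W_{\Mp}$ lies entirely over the finite set $\suppi(\M_\X)$ and carries a rank different from $m_0(\M_\X)$, and it is precisely the density of $\phi^{-1}(U)$ in $\X'$ that closes this gap.
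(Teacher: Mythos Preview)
Your argument for the equality $m_0(\Mp) = m_0(\phi_*\Mp)$ contains a genuine gap: the claim that $\phi^{-1}(U)$ is dense in $\X'$ is false in general. The underlying topological space of $\X'$ is that of its special fiber, which may be reducible (the paper explicitly flags this just before defining $m_0(\Mp)$). Concretely, take $\X = \Spf(\V\langle x\rangle)$, blow up along $(x,\varpi)$, and suppose $\suppi(\M_\X) = \{x=0\}$. Then $U = \X \setminus \{0\}$, and $\phi^{-1}(U)$ lies entirely in the strict transform; it misses the whole exceptional component $U_1 = \Spf(\V\langle x/\varpi\rangle)$, whose special fiber maps to the single point $x=0$. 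Thus a connected component of $W_{\Mp}$ can sit entirely inside an exceptional fibre over $\suppi(\M_\X)$, and your density argument does not reach it. Properness and surjectivity of $\phi$ do not help here: preimages of dense opens under proper surjections are dense only under additional hypotheses (e.g.\ equidimensional fibres), which admissible blow-ups fail.

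The paper sidesteps this geometric difficulty by working with global invariants: it reduces to an affine $\X$ with $\Mk = \Dkq/\I_k$, identifies $m_0$ on either side with the order invariant $\Nb(\I_k)$ (via \cite[lemma 6.7]{hallopeau3} and \cite[corollary 3.11]{hallopeau1}), and uses the isomorphism $\Mkp(\X') \simeq \Dkqp(\X')/\I_k(\X)$ together with the fact that $\Nb$ is read off from global sections, which coincide on $\X$ and $\X'$. This bypasses any question about which components of $W_{\Mp}$ meet $\phi^{-1}(U)$. Your treatments of additivity (via flatness of $\Fkrp$ over $\Dkqp$ and rank additivity over the PID $\vqp$) and of the implication $\CC(\Mp)=0 \Rightarrow \Mp=0$ (via strict positivity of vertical multiplicities and proposition~\ref{prop3.8}) are correct and in fact slightly more direct than the paper's reduction to $\X$; the only obstruction is the horizontal multiplicity step.
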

\begin{proof}
The additivity of characteristic cycles comes from proposition \ref{prop3.4b} together with the additivity of multiplicities.
We now prove that $m_0(\Mp) = m_0(\phi_*\Mp)$.
We can assume that $\X$ is affine and that $\Mk = \Dkq / \I_k$ where $\phi_*\Mp = \varprojlim_k \Mk$ as a coadmissible $\Di$-module.
Thanks to \cite[lemma 6.7]{hallopeau3} and \cite[corollary 3.11]{hallopeau1}, the multiplicity $m_0(\Mp)$ coincides with the integer $\Nb(\I_k) :=  \max\{ \Nb(P), \, P \in \I_k(\X) \}$ for $k$ large enough (where $\Nb(P)$ is the order of $(P \mod \varpi)$ after normalization).
The same holds for $\Mp = \varprojlim_k \Mkp$ when working on the open subset composed of smooth points of $\X'$.
Using \cite[theorem 2.3.8, theorem 2.3.12]{huyghe}, ie the equivalence of categories between coadmissible $\Dip$-modules and coadmissible $\Di$-modules and theorem A for coherent $\Dkqp$-modules, we get $\Mkp(\X') \simeq \Dkqp(\X') / \I_k(\X)$.
Since the order $\Nb$ of differential operators is a global notion, it follows that $m_0(\Mp) = \Nb(\I_k) = m_0(\M_\X)$.
Finally, assume that $\CC(\Mp) = 0$.
This is equivalent to have $\suppi(\Mp) = \emptyset$ and $m_0(\Mp) = 0$. This implies that $\Mp$ is locally a free $\O_{\X' , \Q}$-module of rank less or equal to $m_0(\Mp) = 0$.
Thus, $\Mp = 0$.
\end{proof}

We define the length $\ell(\Mp) \in \N$ of a sub-holonomic $\Di$-module $\Mp$ to be the length of its characteristic cycle, ie the sum of all its multiplicities.
With the notations above, we have $\ell(\Mp) = m(\Mp) + m_0(\Mp)$. It follows immediately that $\ell(\Mp) = \ell(\phi_*\Mp)$.
Moreover, $\Mp =0$ if and only if $\ell(\Mp) = 0$. Proposition \ref{prop3.20} implies the following.

\begin{cor}\label{cor3.21}
Any sub-holonomic $\Dip$-module $\Mp$ is of finite length less or equal to the length $\ell(\Mp)$.
\end{cor}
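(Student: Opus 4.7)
The plan is to deduce this corollary as a short consequence of the additivity and non-degeneracy of the characteristic cycle established in Proposition \ref{prop3.20}, using the standard argument that on an abelian category, any additive function with values in $\N$ that vanishes only on the zero object bounds the length.

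More precisely, I would argue as follows. Suppose $\Mp = \Mp_0 \supsetneq \Mp_1 \supsetneq \dots \supsetneq \Mp_n = 0$ is any strict descending chain of sub-$\Dip$-modules. By Proposition \ref{prop3.4b}, the category $\mathcal{SH}_{\X'}$ of sub-holonomic coadmissible $\Dip$-modules is an abelian subcategory of $\mathcal{C}_{\X'}$, so each $\Mp_i$ is again sub-holonomic and each quotient $\Mp_i / \Mp_{i+1}$ is a non-zero sub-holonomic coadmissible $\Dip$-module. Applying Proposition \ref{prop3.20} to the short exact sequence
\[ 0 \longrightarrow \Mp_{i+1} \longrightarrow \Mp_i \longrightarrow \Mp_i/\Mp_{i+1} \longrightarrow 0 \]
yields $\CC(\Mp_i) = \CC(\Mp_{i+1}) + \CC(\Mp_i/\Mp_{i+1})$, and in particular $\ell(\Mp_i) = \ell(\Mp_{i+1}) + \ell(\Mp_i/\Mp_{i+1})$ since $\ell$ is simply the sum of the (non-negative integer) multiplicities appearing in $\CC$.

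Since each quotient $\Mp_i/\Mp_{i+1}$ is non-zero and sub-holonomic, Proposition \ref{prop3.20} also tells us $\CC(\Mp_i/\Mp_{i+1}) \neq 0$, so at least one of its multiplicities is strictly positive, giving $\ell(\Mp_i/\Mp_{i+1}) \geq 1$. Iterating, one gets $\ell(\Mp) \geq n$, which bounds the length of any chain by $\ell(\Mp) \in \N$ and proves the corollary. There is no real obstacle here: all the work has been done in Propositions \ref{prop3.4b} and \ref{prop3.20}; the only point that deserves a word of care is that submodules of a coadmissible $\Dip$-module remain coadmissible (so that sub-holonomicity, and thus the characteristic cycle and its length, are defined for each $\Mp_i$), but this is part of the abelian structure already invoked.
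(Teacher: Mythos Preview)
Your argument is correct and is exactly the standard deduction the paper leaves implicit: the paper simply states that Corollary \ref{cor3.21} follows from Proposition \ref{prop3.20}, and what you have written is precisely how that implication is unpacked. The only minor imprecision is the aside that ``submodules of a coadmissible $\Dip$-module remain coadmissible'' --- strictly speaking the abelian structure of $\mathcal{C}_{\X'}$ only guarantees closure under kernels and cokernels, not arbitrary $\Dip$-submodules, but since length is computed in the abelian category $\mathcal{C}_{\X'}$ one only considers coadmissible subobjects anyway, so the argument goes through as intended.
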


\begin{example}
Assume as in example \ref{example2.1} that $\X = \Spf(\V\langle x \rangle)$ and that $\phi : \X' \to \X$ is the admissible blow-up of $\X$ defined by the ideal $I = (x , \varpi)$ of $\V\langle x \rangle$. We have $\kp = 1$.
A standard open covering of $\X'$ is given by $U_1 = \Spf \left( \V \left\langle \frac{x}{\varpi} \right\rangle \right)$ and $U_2 = \Spf \left( \V \left\langle \frac{\varpi}{x} , x \right\rangle \right)$.
Consider $P = a(x)\cdot \partial^d + a_{d-1}(x)\cdot \partial^{d-1} + \dots + a_0(x) \in \Di(\X) \simeq \Dip(\X')$.
Since $P$ is a finite order differential operator, the coadmissible $\Di$-module $\M = \Di / P$ is sub-holonomic.
If $x_1 , \dots , x_s$ denote the zeroes of $a$ in $\X$, then we recall that $\suppi(\M) = \{x_1 , \dots, x_s\}$ and the characteristic variety of $\M$ is given by picture \ref{figurechar}.
The module $\M' = \phi^* \M \simeq \Dip / P$ is also sub-holonomic. Its vertical irreducible components correspond to the vertical line passing through the points of the infinite support $\suppi(\M')$ which coincides with the zeroes of $a$ inside the admissible blow-up $\X'$.
Over the open subset of smooth points of $\X' \backslash \suppi(\M')$, the coadmissible $\Dip$-module $\M'$ is locally a free $\O_{\X' , Q}$-module of order $d$, exactly as $\M$ on the open subset $W_\M = \X \backslash \{x_1 , \dots , x_s\}$ of $\X$.
We now assume that $a(x) = (x - \varpi)\cdot(x - \varpi^2) \in I = (x , \varpi) \subset \V\langle x \rangle$.
We have $\bar{a}(x) = x^2$ in $X = \Spec(\kappa[x])$ and $Z(\bar{a}) = \{x = 0\}$.
Thus, the infinite support $\suppi(\M)$ consists of the point $\{x = 0\}$ with multiplicity two.
Let us note $t = \frac{x}{\varpi}$ and $b(t)$ the normalized function $a$ seen as an element of $\O_{\X' , \Q}(\X') \simeq \O_{\X , \Q}(\X)$.
We have $a_{|U_1} = (\varpi t - \varpi)\cdot (\varpi t - \varpi^2) = \varpi^2 \cdot (t - 1) \cdot (t - \varpi)$, $b_{|U_1} = (t - 1) \cdot (t - \varpi)$ and $\bar{b}_{|U_1} = t \cdot (t - 1)$.
Similarly, one has $b_{|U_2} = (t^{-1} - 1) \cdot (t^{-1} - \varpi)$.
In particular, $Z(\bar{b})$ is composed of two points and $\suppi(\M')$ contains two closed points of multiplicity one.
\end{example}

\section{Sub-holonomic $\Dzr$-modules}\label{section5}

Let us denote by $\G$ the set of all admissible formal blow-ups $\X' \to \X$ of $\X$.
For another admissible formal blow-up $\X'' \to \X$ of $\X$, we say that $\X'' \geq \X'$ if the blow-up $\X'' \to \X$ factors through $\X' \to \X$.
The corresponding morphism $\pp : \X'' \to \X'$ is then an admissible blow-up which is uniquely determined by the universal property of blow-ups.
As a consequence, the set $\G$ of admissible blow-ups of $\X$ is partially ordered and directed, ie any two elements have a common upper bound.
The Zariski-Riemann space associated with $\X$ is defined as the projective limit of all admissible blow-ups:
\[ \zr := \varprojlim_{\X' \in \G} \X' .\]
Since the formal curve $\X$ is connected and quasi-compact, the Zariski Riemann space $\zr$ is also connected and quasi-compact.
This is proved for example in \cite[section II.3.1]{rigid}.
For an admissible blow-up $\X' \in \G$, we denote by $\sp : \zr \to \X'$ the canonical projection. This map is quasi-compact, closed and projective.
If $\X'' \geq \X'$, then we have $\sp = \pp \circ \, \spp$, where $\pp : \X'' \to \X'$ is the associated admissible blow-up.

\subsection{Coadmissible $\Dzr$-modules}

For any admissible blow-up $\pp : \X'' \to \X'$ in $\G$, we dispose respectively of the sheaves $\Dip$ and $\mathcal{\D}_{\X'' , \infty}$ of rapidly converging differential operators.
It was proved in \cite[proposition 3.1.4]{huyghe} that there exists a canonical isomorphism $(\pp)_* \mathcal{\D}_{\X'' , \infty} \simeq \Dip$.
We obtain from the adjunction formula $\pp^{-1} \circ (\pp)_* \to \id$ a transition homomorphism of sheaves $\sp^{-1} \Dip \to \spp^{-1}\D_{\X'' , \infty}$ commuting with the partial order of $\G$.
The sheaf $\Dzr$ of differential operators over the Zariski-Riemann space $\zr$ is defined in \cite{huyghe} by
\[ \Dzr := \varinjlim_{\X' \in \G} \sp^{-1} \Dip. \]

\begin{definition}
A coadmissible $\Dzr$-module is a $\Dzr$-module $\Mzr$ such that for any admissible blow-up $\X'$ of $\X$, there exists a coadmissible $\Dip$-modules $\Mp$ satisfying the following conditions:
\begin{enumerate}
\item
If $\X'' \geq \X'$, then there exists an isomorphism $\varphi_{\X'' , \X'} : (\pp)_* \M_{\X''} \overset{\simeq}{\longrightarrow} \Mp$.
\item
Whenever $\X''' \geq \X'' \geq \X'$, we have $\varphi_{\X'' , \X'} \circ (\pp)_*( \varphi_{\X''' , \X''}) = \varphi_{\X''' , \X'}$.
\item
$\Mzr$ is isomorphic to the inductive limit $\varinjlim_{\X' \in \G} \sp^{-1} (\Mp)$ as $\Dzr$-module.
\end{enumerate}
\end{definition}

Thanks to \cite[proposition 3.2.5]{huyghe}, we know that the category $\mathcal{C}_{\zr}$ of coadmissible $\Dzr$-module is abelian and equivalent to the category of coadmissible $\Dip$-modules for any admissible blow-up $\X'$ of $\X$.
More precisely, let us briefly recall how one can associate a coadmissible $\Dzr$-module to a coadmissible $\Dip$-module $\Mp$.
For any admissible blow-up $\pp : \X'' \to \X'$ of $\X'$, the $\Dip$-module $\M_{\X''} := \pp^! \Mp$ is coadmissible.
Moreover, we know by \cite[theorem 3.1.12]{huyghe} that we have a natural isomorphism $(\pp)_* \M_{\X''} \simeq \Mp$.
Thus, we get a transition homomorphism $\sp^{-1} \Mp \to \spp^{-1} \M_{\X''}$ and we define from the module $\Mp$ the coadmissible $\Dzr$-module $\Mzr := \varinjlim_{\X'' \in \mathcal{BL}_{\X'}} \spp^{-1} \M_{\X''}$.
The quasi-inverse functor is simply given by pushforward via the specialisation map $\sp : \zr \to \X'$.
Thus, we get an equivalence of abelian categories $(\sp)_* : \mathcal{C}_{\zr} \overset{\simeq}{\longrightarrow} \mathcal{C}_{\X'}$, where $\mathcal{C}_{\X'}$ is the category of coadmissible $\Dip$-modules for any admissible blow-up $\X' \in \G$.
Similarly, one can define by proposition \ref{propwh} the right coadmissible $\Dzr$-module
\[ \Ext^d_{\Dzr}(\Mzr , \Dzr) := \varinjlim_{\X' \in \G} \sp^{-1} \left(\Ext^d_{\Dip}(\Mp , \Dip)\right) .\]
The coadmissible module $\Mzr$ is said to be weakly holonomic if $\Ext^d_{\Dzr}(\Mzr , \Dzr) = 0$ for all $d \neq \dim\X = 1$.
Since $(\sp)_*\left(\Ext^d_{\Dzr}(\Mzr , \Dzr)\right) \simeq \Ext^d_{\Dip}(\Mp , \Dip)$, weakly holonomicity of $\Mzr$ implies that $\Mp$ is also weakly holonomic.
The converse comes from proposition \ref{propwh}, ie $\Mzr$ is weakly holonomic as soon as $\Mp$ is for some admissible blow-up $\X'$ of $\X$.
We recall that the dual of a coadmissible $\Dip$-module $\Mp  = \varprojlim_k \Mkp$ is defined by the left coadmissible $\Dip$-module
\[ \Mp^\vee := \varprojlim_k \left( \Ext^1_{\Dkqp}(\Mkp , \Dkqp) \otimes_{\O_{\X' , \Q}} \omega_{\X' , \Q}^{-1} \right) .\]
The dual of $\Mzr$ is then the left coadmissible $\Dzr$-module $\Mzr^\vee := \varinjlim_{\X' \in \G} \sp^{-1} \Mp^\vee$.
We deduce from proposition \ref{propdual} that $(\Mzr^\vee)^\vee \simeq \Mzr$ when $\Mzr$ is weakly holonomic.
Next proposition summarizes all this.

\begin{prop}
Let $\Mzr = \varinjlim_{\X' \in \G} \sp^{-1} \M_{\X'}$ be a coadmissible $\Dzr$-module.
Then $\Mzr$ is weakly holonomic if and only if $\Mp$ is for some admissible blow-up $\X' \in \G$.
In this case, all the coadmissible modules $\Mp$ are weakly holonomic and the dual $\Mzr^\vee$ of $\Mzr$ satisfies a biduality isomorphism.
\end{prop}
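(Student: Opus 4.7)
The plan is to reduce everything to the equivalences of categories $(\sp)_* : \mathcal{C}_{\zr} \overset{\simeq}{\longrightarrow} \mathcal{C}_{\X'}$ and $\phi_* : \mathcal{C}_{\X'} \overset{\simeq}{\longrightarrow} \mathcal{C}_\X$, together with propositions \ref{propwh} and \ref{propdual} which already do the work at each finite level.

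For the equivalence between weak holonomicity of $\Mzr$ and weak holonomicity of some $\Mp$, the forward direction is already observed in the paragraph preceding the proposition: the isomorphism $(\sp)_*\Ext^d_{\Dzr}(\Mzr, \Dzr) \simeq \Ext^d_{\Dip}(\Mp, \Dip)$ transports vanishing of $\Ext^d_{\Dzr}(\Mzr , \Dzr)$ for $d \neq 1$ to vanishing of $\Ext^d_{\Dip}(\Mp , \Dip)$. For the converse, suppose $\Mp$ is weakly holonomic for one specific $\X' \in \G$. Given any other $\X"' \in \G$, I would use that $\G$ is directed to pick $\X" \in \G$ with $\X" \geq \X'$ and $\X" \geq \X"'$; then $\M_{\X"} = \pp^! \Mp$ is weakly holonomic by proposition \ref{propwh} applied to $\pp : \X" \to \X'$, and $\M_{\X"'} \simeq (\pi_{\X",\X"'})_* \M_{\X"}$ is then weakly holonomic by the same proposition applied the other way. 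Thus all $\M_{\X"'}$ are weakly holonomic, i.e.\ $\Ext^d_{\Dip}(\Mp , \Dip) = 0$ uniformly in $\X' \in \G$ for $d\neq 1$, and by definition
\[ \Ext^d_{\Dzr}(\Mzr , \Dzr) = \varinjlim_{\X' \in \G} \sp^{-1} \Ext^d_{\Dip}(\Mp , \Dip) = 0 \]
for $d \neq 1$.

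For the biduality, once weak holonomicity is established at every level, proposition \ref{propdual} gives an isomorphism $(\Mp^d)^d \simeq \Mp$ of coadmissible $\Dip$-modules for each $\X' \in \G$. Since the dual on the Zariski-Riemann side is defined termwise as $\Mzr^d = \varinjlim_{\X' \in \G} \sp^{-1} \Mp^d$, applying the same construction once more yields
\[ (\Mzr^d)^d = \varinjlim_{\X' \in \G} \sp^{-1} (\Mp^d)^d \simeq \varinjlim_{\X' \in \G} \sp^{-1} \Mp = \Mzr . \]
The only point that needs a small verification is that the system $(\Mp^d)_{\X' \in \G}$ really assembles into a coadmissible $\Dzr$-module, i.e.\ that the transition isomorphisms $(\pp)_* \M_{\X"}^d \iso \Mp^d$ exist and are compatible; this follows from the compatibility of $\phi_*$ with the Ext and with the pullback of the dualizing sheaf $\omega_{\X'/\V , \Q}$ for admissible blow-ups $\pp$, exactly as already used to prove the isomorphism $\phi_*(\Ext^d_{\Dip}(\Mp, \Dip)) \simeq \Ext^d_{\Di}(\M_\X , \Di)$ in section \ref{section3.2}.

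The main obstacle, such as it is, lies in the bookkeeping for the biduality step: one must check that the system $(\Mp^d)_{\X' \in \G}$ really is the datum defining $\Mzr^d$ as a coadmissible $\Dzr$-module in the sense of the three conditions recalled before the proposition, so that applying the dual construction again legitimately produces $\varinjlim \sp^{-1} (\Mp^d)^d$. Everything else is a formal consequence of the levelwise statements, so the proof is essentially a transfer argument through the equivalences of categories.
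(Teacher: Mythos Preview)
Your proposal is correct and follows essentially the same approach as the paper: the proposition is stated there as a summary of the preceding paragraph, which reduces everything to propositions \ref{propwh} and \ref{propdual} via the isomorphism $(\sp)_*\Ext^d_{\Dzr}(\Mzr,\Dzr)\simeq\Ext^d_{\Dip}(\Mp,\Dip)$. Your version is more explicit about the directedness argument and the compatibility check for the system $(\Mp^d)_{\X'\in\G}$, but the paper simply routes everything through the base $\X$ (if $\Mp$ is weakly holonomic then so is $\M_\X=\phi_*\Mp$, hence so is every $\M_{\X''}$), which is a minor variant of the same idea.
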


Let $\Mzr = \varinjlim_{\X' \in \G} \sp^{-1} \M_{\X'}$ be a coadmissible $\Dzr$-module.
One can check that proposition \ref{prop2.7} holds for any admissible blow-up $\pp : \X'' \to \X'$ in $\G$ ; the proof is exactly the same.
Thus, we have canonical isomorphisms $(\pp)_* \M_{\X'' , k , r} \simeq \Mkrp$ for all integers $k \geq r > \max\{\kp , k_{\X''}\}$.
We deduce from theorem \ref{theorem3.14} that the coadmissible module $\M_{\X"}$ is sub-holonomic if and only if the module $\Mp$ is.
In this situation, the support $\suppi(\M_{\X''})$ is finite and we deduce exactly as in lemmas \ref{lemma3.9} and \ref{lemma3.10} that $\pp(\suppi (\M_{\X''})) = \suppi(\Mp)$.
Working on irreducible components of $\X'$, we prove as in proposition \ref{prop3.13} that $\pp(\suppi (\M_{\X''})) = \suppi(\Mp)$.
As a consequence, we obtain the following proposition.

\begin{prop}
Let $\Mzr = \varinjlim_{\X' \in \G} \sp^{-1} (\Mp)$ be a coadmissible $\Dzr$-module and we fix a blow-up $\pp : \X'' \to \X'$ for some $\X'' \geq \X'$ in $\G$.
\begin{enumerate}
\item
For any integers $k \geq r > \max\{\kp , k_{\X''} \}$, there is a canonical isomorphism of coherent $\Fkrp$-modules $(\pp)_* \M_{\X'' , k , r} \simeq \Mkrp$.
\item
We have $\pp(\suppi (\M_{\X''})) = \suppi(\Mp)$.
\item
The coadmissible $\D_{\X'' , \infty}$-module $\M_{\X''}$ is sub-holonomic if and only if the coadmissible $\Dip$-module $\Mp$ is.
\end{enumerate}
\end{prop}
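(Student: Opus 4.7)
The plan is to apply the results of sections \ref{section2} and \ref{section3} to the admissible blow-up $\pp : \X" \to \X'$ in the same way they were applied to $\phi : \X' \to \X$. Although $\X'$ is no longer a smooth formal scheme in general (the exceptional fiber may introduce singularities, as in example \ref{example2.1}), it is Noetherian, quasi-compact, of dimension one, and admits a basis $\mathcal{U}"$ of affine opens lying over the standard basis $\mathcal{U}'$ of $\X'$, with respect to which the sheaves $\F_{\X" , k , r}$, their flatness over $\D^{(0)}_{\X" , k , \Q}$, and the quasi-inverse equivalence $(\pp)_* , \, \pp^!$ between categories of coherent modules are constructed exactly as in section \ref{section2}. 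With this in place, the three parts follow by essentially formal transfer.

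For $(1)$, I would rerun the proof of proposition \ref{prop2.7} with $(\pp , \X' , \X")$ in place of $(\phi , \X , \X')$. Locally the coherent $\F_{\X" , k , r}$-module $\M_{\X" , k , r}$ is finitely presented, so the analogue of proposition \ref{prop2.5} combined with the short exact sequence coming from a finite presentation and the equality $\dim \X" = 1$ forces the vanishing $R^j (\pp)_* \M_{\X" , k , r} = 0$ for $j \geq 1$. The canonical adjunction map $\Mkrp \to (\pp)_* \M_{\X" , k , r}$ is then an isomorphism by a five-lemma comparison with a finite presentation of $\Mkrp$ over $\Fkrp$, using that $(\pp)_* \F_{\X" , k , r} \simeq \Fkrp$.

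For $(2)$, I would first treat the sub-holonomic situation, in which $\supp(\M_{\X" , k , r})$ is a finite set of closed points for $k , r$ sufficiently large. The skyscraper decomposition on $\X"$, combined with $(1)$, gives $\pp(\supp(\M_{\X" , k , r})) = \supp(\Mkrp)$; the converse inclusion reduces, as in lemma \ref{lemma3.9}, to the case $\Mkrp \simeq \Fkrp / P$ with $P$ a finite differential operator, and the invertibility criterion of lemma \ref{lemma2.4} together with the identification $\F_{\X" , k , r}(\X") \simeq \Fkrp(\X')$ of global sections shows that the zeros of the dominant coefficient of $P$ in $\X"$ project onto its zeros in $\X'$. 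Passage to the double projective limit in $k$ and $r$ as in lemma \ref{lemma3.10} then yields the equality of infinite supports in the sub-holonomic case. When neither module is sub-holonomic, I would argue componentwise on $\X'$ as in proposition \ref{prop3.13}: $\pp$ is an isomorphism over a dense open in each irreducible component and is a closed map, so density and closedness of the pushforward give the equality on each component.

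Finally, for $(3)$, theorem \ref{theorem3.14} applied verbatim with $\pp$ in place of $\phi$ gives the equivalence of sub-holonomicity between $\M_{\X"}$ and $\Mp$, together with a bound on the cardinality of $\suppi(\M_{\X"})$ controlled by the multiplicities of $\Mp$ via the decomposition \eqref{eq2}. The principal obstacle I foresee is purely bookkeeping: checking that every auxiliary ingredient from sections \ref{section2} and \ref{section3} carries over when the base $\X$ is replaced by the possibly non-smooth $\X'$, in particular that the microlocalization and cotangent-space constructions still function at the exceptional locus. This should be routine, since those constructions only rely on the basis $\mathcal{U}'$ of affine charts and the quasi-abelian structure of the relevant Banach algebras, both of which are available for $\pp : \X" \to \X'$ by the same arguments developed in section \ref{section2}.
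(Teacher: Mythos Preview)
Your proposal is correct and follows essentially the same approach as the paper: the paper's proof (given in the paragraph immediately preceding the proposition) simply notes that proposition \ref{prop2.7} holds verbatim for $\pp : \X" \to \X'$, that theorem \ref{theorem3.14} transfers to give the sub-holonomicity equivalence, and that lemmas \ref{lemma3.9}, \ref{lemma3.10} and proposition \ref{prop3.13} (the latter applied on irreducible components of $\X'$) yield the equality of infinite supports. Your treatment is a faithful, slightly more detailed expansion of this, and your remark about the non-smoothness of $\X'$ being only a bookkeeping issue is exactly right and is implicitly what the paper means by ``the proof is exactly the same''.
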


Let again $\Mzr = \varinjlim_{\X' \in \G} \sp^{-1} (\Mp)$ be a coadmissible $\Dzr$-module.
To resume, the coadmissible $\Dip$-modules $\Mp$ are all holonomic or none at all.
Let us recall that for any admissible blow-up $\pp : \X'' \to \X'$ in $\G$, $\sp = \pp \circ \spp$. It follows that $\spp^{-1}(\suppi \M_{\X''}) \subset \sp^{-1}(\suppi \Mp)$.

\begin{definition}
We associate to any coadmissible $\Dzr$-module $\Mzr = \varinjlim_{\X' \in \G} \sp^{-1} (\Mp)$ its infinite support defined by
\[ \suppi(\Mzr) := \bigcap_{\X' \in \G} \sp^{-1}(\suppi \Mp) \subset \zr .\]
\end{definition}

Let $\X'$ be in $\G$.
Since the specialisation map $\sp : \zr \to \X'$ is surjective, we have $\sp(\sp^{-1}(\suppi (\Mp))) = \suppi (\Mp)$.
Moreover, the equality $\pp(\suppi (\M_{\X''})) = \suppi(\Mp)$ also implies that for any $x' \in \sp^{-1}(\suppi \Mp)$, there exists a point $x'' \in \spp^{-1}(\suppi \M_{\X''})$ such that $\sp(x') = \sp(x")$.
We deduce from these two facts that $\sp(\suppi(\Mzr)) = \suppi (\Mp)$.
As a consequence, the coadmissible $\Dip$-module $\Mp$ is sub-holonomic for any admissible blow-up $\X' \in \G$ as soon as the support $\suppi(\Mzr)$ is finite.

\begin{prop}\label{prop4.3}
Let $0 \to \Nn_{\zr} \to \Mzr \to \L_{\zr} \to 0$ be a short exact sequence of coadmissible $\Dzr$-modules.
Then $\suppi (\Mzr) = \suppi (\Nn_{\zr}) \cup \suppi (\L_{\zr})$.
\end{prop}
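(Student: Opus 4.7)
The plan is to reduce the statement on the Zariski--Riemann space $\zr$ to the analogous statement on each admissible blow-up $\X'$, for which we already have Proposition \ref{prop3.4b}, and then handle the resulting intersection carefully by exploiting the directedness of $\G$.

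First, I fix an admissible blow-up $\X' \in \G$ and push the given short exact sequence forward along the specialisation map $\sp : \zr \to \X'$. Since $(\sp)_* : \mathcal{C}_{\zr} \overset{\sim}{\to} \mathcal{C}_{\X'}$ is an equivalence of abelian categories, it is exact, so we obtain a short exact sequence of coadmissible $\Dip$-modules
\[ 0 \to \Nn_{\X'} \to \Mp \to \L_{\X'} \to 0 . \]
Applying Proposition \ref{prop3.4b} to this sequence yields $\suppi(\Mp) = \suppi(\Nn_{\X'}) \cup \suppi(\L_{\X'})$, and taking preimages under $\sp$ gives
\[ \sp^{-1}\bigl( \suppi(\Mp) \bigr) = \sp^{-1}\bigl( \suppi(\Nn_{\X'}) \bigr) \cup \sp^{-1}\bigl( \suppi(\L_{\X'}) \bigr) . \]
By definition of the infinite support on $\zr$, it then suffices to intersect over all $\X' \in \G$.

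The main point is that intersection and union do not commute in general, so I need to use the structure of $\G$. Recall that for any $\X" \geq \X'$ in $\G$, the admissible blow-up $\pp : \X" \to \X'$ satisfies $\pp(\suppi \M_{\X"}) = \suppi \Mp$ (and similarly for $\Nn$ and $\L$) by the discussion preceding the proposition, hence $\spp^{-1}(\suppi \M_{\X"}) \subset \sp^{-1}(\suppi \Mp)$ and analogously for $\Nn_{\X"}, \L_{\X"}$. Thus the families $\{\sp^{-1}(\suppi \Nn_{\X'})\}_{\X' \in \G}$ and $\{\sp^{-1}(\suppi \L_{\X'})\}_{\X' \in \G}$ are both decreasing along the directed set $\G$.

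The inclusion $\suppi(\Nn_{\zr}) \cup \suppi(\L_{\zr}) \subset \suppi(\Mzr)$ is then immediate. For the reverse inclusion, I argue by contradiction: suppose $x \in \suppi(\Mzr)$ but $x$ lies in neither $\suppi(\Nn_{\zr})$ nor $\suppi(\L_{\zr})$. Then there exist $\X', \X" \in \G$ such that $x \notin \sp^{-1}(\suppi \Nn_{\X'})$ and $x \notin \spp^{-1}(\suppi \L_{\X"})$. Choose $\X''' \in \G$ with $\X''' \geq \X'$ and $\X''' \geq \X"$, which exists by directedness. By the decreasing property, $x$ belongs to neither $\sptilde_{\X'''}^{-1}(\suppi \Nn_{\X'''})$ nor $\sptilde_{\X'''}^{-1}(\suppi \L_{\X'''})$, hence by the identity established above applied at $\X'''$, $x \notin \sptilde_{\X'''}^{-1}(\suppi \M_{\X'''})$. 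This contradicts $x \in \suppi(\Mzr) \subset \sptilde_{\X'''}^{-1}(\suppi \M_{\X'''})$, completing the proof. The only delicate step is precisely this exchange of intersection and union, and directedness of $\G$ is exactly what makes it work.
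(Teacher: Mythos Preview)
Your proof is correct and follows essentially the same approach as the paper: push the sequence forward to each $\X'$ via the exact functor $(\sp)_*$, apply Proposition \ref{prop3.4b} there, pull back, and then use the directed/decreasing structure of $\G$ to exchange the intersection with the union. The only cosmetic difference is that the paper argues directly (``$x$ lies in one side for cofinally many $\X'$, hence for all by monotonicity'') while you phrase the same step as a contradiction via a common upper bound; these are logically equivalent.
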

\begin{proof}
Let $\X'$ be an admissible blow-up of $\X$.
Since the functor $(\sp)_*$ is exact for coadmissible $\Dzr$-modules by \cite[proposition 3.2.5]{huyghe}, we get a short exact sequence $0 \to \Nn_{\X'} \to \Mp \to \L_{\X'} \to 0$ of coadmissible $\Dip$-modules.
Thanks to proposition \ref{prop3.4b}, we know that $\suppi (\Mp) = \suppi (\Nn_{\X'}) \cup \suppi (\L_{\X'})$.
Thus,
\[ \sp^{-1}(\suppi (\Mp)) = \sp^{-1}(\suppi (\Nn_{\X'})) \cup \sp^{-1}(\suppi (\L_{\X'})) . \]
We immediately deduce that
\[ \suppi(\Mzr) = \bigcap_{\X' \in \G} \sp^{-1}(\suppi \Mp) \supset \suppi (\Nn_{\zr}) \cup \suppi (\L_{\zr}) . \]
Conversely, let $x \in \suppi(\Mzr) = \bigcap_{\X' \in \G} \left( \sp^{-1}(\suppi (\Nn_{\X'})) \cup \sp^{-1}(\suppi (\L_{\X'})) \right)$.
Then $x$ belongs for example to the set $\sp^{-1}(\suppi (\Nn_{\X'}))$ for an infinite number of admissible blow-ups $\X'$ of $\X$.
Recall that the sequence $(\sp^{-1}(\suppi (\Nn_{\X'})))_{\X' \in \G}$ is decreasing. As a consequence, $x \in \sp^{-1}(\suppi (\Nn_{\X'}))$ for any $\X' \in \G$.
Then $x \in \suppi (\Nn_{\zr}) = \bigcap_{\X' \in \G} \sp^{-1}(\Nn_{\X'})$.
Thus, $\suppi(\Mzr) \subset \suppi (\Nn_{\zr}) \,\cup \, \suppi (\L_{\zr})$.
\end{proof}

\subsection{Sub-holonomic $\Dzr$-modules}

We naturally define sub-holonomic coadmissible $\Dzr$-modules by asking there infinite supports to be finite.

\begin{definition}
Let $\Mzr$ be a coadmissible $\Dzr$-module.
\begin{enumerate}
\item
The module $\Mzr$ is said to be sub-holonomic if the support $\suppi(\Mzr))$ is finite.
\item
The module $\Mzr$ is called an integrable connection if this is a coherent $\O_{\zr , \Q}$-module.
\end{enumerate}
\end{definition}

Let $\X'$ be an admissible blow-up of $\X$ and $\sp : \zr \to \X'$ the specialization map.
Since $\sp(\suppi(\Mzr)) = \suppi (\Mp)$, the fact that $\Mzr = \varinjlim_{\X' \in \G} \sp^{-1} (\Mp)$ is sub-holonomic implies that the coadmissible $\Dip$-modules $\Mp$ are all sub-holonomic.
As a consequence, any sub-holonomic $\Dzr$-module is weakly holonomic.
Next example implies that for any integrable connection $\Mzr$, we have $\suppi(\Mzr) = \emptyset$.
In particular, integrable connections are sub-holonomic.

\begin{example}\label{example4.5}
The coadmissible $\Dzr$-module $\O_{\zr , \Q} := \varinjlim_{\X' \in \G} \sp^{-1} \O_{\X' , \Q}$ is by definition an integrable connection.
Moreover, since all the coadmissible $\Dip$-modules $\O_{\X' , \Q}$ are integrable connections, we have $\suppi(\O_{\X' , \Q}) = \emptyset$.
It follows that
\[ \suppi(\O_{\zr , \Q}) = \bigcap_{\X' \in \G} \sp^{-1}(\suppi \Mp) = \emptyset . \]
Similarly, the coadmissible $\Dzr$-modules $(\O_{\zr , \Q})^n \simeq \varinjlim_{\X' \in \G} \sp^{-1} (\O_{\X' , \Q})^n$ are integrable connections and satisfy $\suppi((\O_{\zr , \Q})^n) = \emptyset$.
\end{example}

\begin{prop}\label{prop4.6}
A coadmissible $\Dzr$-module $\Mzr$ is an integrable connection if and only if $\suppi(\Mzr) = \emptyset$.
In this case, $\Mzr$ is locally a free $\O_{\zr , \Q}$-module of finite rank.
\end{prop}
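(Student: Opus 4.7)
The plan is to reduce the statement to Proposition~\ref{prop3.8} via the equivalence of categories $(\sp)_* : \mathcal{C}_{\zr} \overset{\sim}{\longrightarrow} \mathcal{C}_{\X'}$, the identity $\sp(\suppi(\Mzr)) = \suppi(\Mp)$ established just before the definition of $\suppi$ over the Zariski-Riemann space, and the compatibility $\O_{\zr , \Q} = \varinjlim_{\X' \in \G} \sp^{-1} \O_{\X' , \Q}$, which yields $(\sp)_* \O_{\zr , \Q} \simeq \O_{\X' , \Q}$ for every $\X' \in \G$. As usual, write $\Mzr = \varinjlim_{\X' \in \G} \sp^{-1} \Mp$ with $\Mp = (\sp)_* \Mzr$. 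For the direct implication, suppose $\Mzr$ is coherent over $\O_{\zr , \Q}$. The open sets of the form $\sp_{\X'}^{-1}(V)$, with $V$ open in some $\X' \in \G$, form a basis of the topology of $\zr$, so locally on such an open set $\Mzr$ admits a finite presentation as $\O_{\zr , \Q}$-module. Applying the exact functor $(\sp)_*$ and using $(\sp)_* \O_{\zr , \Q} \simeq \O_{\X' , \Q}$ yields that $\Mp$ is locally finitely presented, hence coherent, over $\O_{\X' , \Q}$. Proposition~\ref{prop3.8} then gives $\suppi(\Mp) = \emptyset$ for every $\X' \in \G$, and therefore $\suppi(\Mzr) = \bigcap_{\X' \in \G} \sp^{-1}(\emptyset) = \emptyset$.

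For the converse, assume $\suppi(\Mzr) = \emptyset$. Applying the surjective map $\sp$ and invoking the identity $\sp(\suppi(\Mzr)) = \suppi(\Mp)$, we deduce $\suppi(\Mp) = \emptyset$ for every $\X' \in \G$, so Proposition~\ref{prop3.8} implies that each $\Mp$ is locally a free $\O_{\X' , \Q}$-module of finite rank. Fix $x \in \zr$, an admissible blow-up $\X' \in \G$, and an open neighborhood $V$ of $\sp_{\X'}(x)$ in $\X'$ such that $\Mp|_V \simeq (\O_{\X' , \Q}|_V)^n$ for some integer $n \in \N$. For any $\X" \geq \X'$ in $\G$, the proposition preceding the current one gives $\suppi(\M_{\X"}) = \emptyset$, and since $\Mp$ is an integrable connection the standard identification $\pp^! \Mp \simeq \pp^* \Mp$ for $\O$-coherent $\D$-modules yields $\M_{\X"}|_{(\pp)^{-1}(V)} \simeq (\O_{\X" , \Q}|_{(\pp)^{-1}(V)})^n$. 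Taking $\spp^{-1}$ and passing to the inductive limit over $\X" \geq \X'$ in $\G$, I obtain
\[ \Mzr|_{\sp_{\X'}^{-1}(V)} \simeq \left( \O_{\zr , \Q}|_{\sp_{\X'}^{-1}(V)} \right)^n , \]
so $\Mzr$ is locally a free $\O_{\zr , \Q}$-module of finite rank on a basis of open subsets of $\zr$, and in particular $\O_{\zr , \Q}$-coherent.

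The main obstacle I expect is the forward direction, where one must verify that pushforward along the quasi-compact projective map $\sp$ preserves $\O$-coherence for coadmissible modules. This is not a purely formal consequence of the equivalence of coadmissible categories, but in our situation it follows from the explicit description $\O_{\zr , \Q} = \varinjlim \sp^{-1} \O_{\X' , \Q}$, the fact that the preimages by $\sp_{\X'}$ of open subsets of the various $\X' \in \G$ form a basis of the topology of $\zr$, and the exactness of $(\sp)_*$ on coadmissible modules transported along the equivalence $\mathcal{C}_{\zr} \simeq \mathcal{C}_{\X'}$.
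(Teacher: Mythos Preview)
Your proposal is correct and follows the same strategy as the paper: reduce both implications to the formal models via the equivalence $(\sp)_* : \mathcal{C}_{\zr} \simeq \mathcal{C}_{\X'}$, the identity $\sp(\suppi(\Mzr)) = \suppi(\Mp)$, and Proposition~\ref{prop3.8}. The only organizational difference lies in the converse: the paper works directly over the base $\X$ and invokes Theorem~A for coadmissible $\Dzr$-modules \cite[theorem 3.2.6]{huyghe} to transport a local isomorphism $(\M_\X)_{|U}\simeq (\O_{\X,\Q})_{|U}^n$ to $\zr$ in one step, whereas you fix a basis of $\Mp|_V$, propagate it through every $\X''\geq \X'$ via $\pp^!\Mp\simeq \pp^*\Mp$, and pass to the inductive limit; your route is slightly more explicit and avoids having to check that the local trivialisation over $\X$ is $\Di$-linear. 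For the forward direction the paper simply refers to Example~\ref{example4.5}; the point you flag (that $(\sp)_*$ is a priori exact only on coadmissible $\Dzr$-modules, not on arbitrary $\O_{\zr,\Q}$-linear presentations) is handled by the standard fact that any coherent $\O_{\zr,\Q}$-module descends to a coherent $\O_{\X',\Q}$-module on some model $\X'\in\G$, after which Proposition~\ref{prop3.8} applies directly.
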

\begin{proof}
The direct implication comes from example \ref{example4.5}.
Let us assume that the coadmissible module $\Mzr = \varinjlim_{\X' \in \G} \sp^{-1} (\Mp)$ has an empty infinite support, ie $\suppi(\Mzr) = \emptyset$.
Since $\mathrm{sp}_\X(\suppi(\Mzr)) = \suppi (\M_\X)$, it follows that $\suppi (\M_\X) = \emptyset$.
Then \cite[proposition 6.8]{hallopeau3} implies that the coadmissible $\Di$-module $\M_\X$ is locally a free $\O_{\X , \Q}$-module of finite rank.
Let $U$ be an affine open subset of $\X$ such that $(\M_\X)_{|U} \simeq (\O_{\X , \Q})_{|U}^n$ as $\Di$-module.
Then by \cite[theorem 3.2.6]{huyghe} (theorem A for coadmissibles $\Dzr$-modules) we deduce that $(\Mzr)_{|\mathrm{sp}_\X^{-1}(U)} \simeq  (\O_{\zr , \Q})_{|\mathrm{sp}_\X^{-1}(U)}^n$.
In this way, we can obtain an open covering of $\zr$ for which $\Mzr$ is locally a free $\O_{\zr , \Q}$-module of finite rank.
\end{proof}

We now prove the equivalence between holonomicity over the formal smooth model $\X$ and the Zariski-Riemann space $\zr$.

\begin{theorem}\label{theorem4.7}
Let $\Mzr = \varinjlim_{\X' \in \G} \sp^{-1} (\Mp)$ be a coadmissible $\Dzr$-module.
\begin{enumerate}
\item
The module $\Mzr$ is sub-holonomic if and only if one of the coadmissible modules $\Mp$ is sub-holonomic for some admissible blow-up $\X'$ of $\X$.
In this case, all the coadmissible $\Dip$-modules $\Mp$ are sub-holonomic.
\item
If $\Mzr$ is sub-holonomic, then it has finite multiplicities at the points of its infinite support $\suppi(\Mzr)$.
Moreover, these multiplicities are additive on short exact sequences of sub-holonomic coadmissible $\Dzr$-modules.
\end{enumerate}
\end{theorem}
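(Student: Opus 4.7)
The plan is to bootstrap this theorem from Theorem \ref{theorem3.14}, which already establishes the equivalence of sub-holonomicity between $\X$ and an arbitrary admissible blow-up $\X'$, together with the surjectivity $\sp(\suppi(\Mzr)) = \suppi(\Mp)$ noted just before Proposition \ref{prop4.3}. For the forward direction of part 1, if $\Mzr$ is sub-holonomic then $\suppi(\Mp) = \sp(\suppi(\Mzr))$ is finite for every $\X' \in \G$, so every $\Mp$ is sub-holonomic. Conversely, if $\Mp$ is sub-holonomic for some single $\X' \in \G$, then Theorem \ref{theorem3.14} applied to $\phi : \X' \to \X$ shows that $\M_\X = \phi_* \Mp$ is sub-holonomic, and applying the same theorem to every other admissible blow-up of $\X$ yields that all $\Mp$ are sub-holonomic.

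The main obstacle is then the finiteness of $\suppi(\Mzr)$ itself under these equivalent conditions. I would first invoke Theorem \ref{theorem3.14} once more to get a uniform bound $\card(\suppi(\Mp)) \leq m(\M_\X)$ valid for every $\X' \in \G$. For $\X'' \geq \X'$ in $\G$, the analogue of Proposition \ref{prop3.13} applied to the admissible blow-up $\pp$ (recalled just before this theorem) provides a surjection $\pp : \suppi(\M_{\X''}) \to \suppi(\Mp)$, so the family $\{\suppi(\Mp)\}_{\X' \in \G}$ forms a directed projective system of surjections between finite sets of cardinality bounded by $m(\M_\X)$. Its cardinality is therefore eventually constant: there exists $\X'_0 \in \G$ such that for all $\X'' \geq \X' \geq \X'_0$, the bonding map $\pp$ restricts to a \emph{bijection} on infinite supports. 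Since a point $z \in \suppi(\Mzr)$ corresponds precisely to a compatible system $(z_{\X'})_{\X' \in \G}$ with $z_{\X'} = \sp(z) \in \suppi(\Mp)$ for every $\X' \in \G$, the set $\suppi(\Mzr)$ injects into $\varprojlim_{\X' \in \G} \suppi(\Mp)$, which is finite of cardinality at most $m(\M_\X)$ by the stabilization.

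For part 2, I would define the multiplicity of $\Mzr$ at a point $z \in \suppi(\Mzr)$ by $m_z(\Mzr) := m_{z_{\X'}}(\Mp)$ for any $\X' \geq \X'_0$. Well-definedness follows from the compatibility formula \ref{eq2} applied to $\pp$: for $\X'' \geq \X' \geq \X'_0$ the map $\pp$ restricts to a bijection on infinite supports, so the decomposition in \ref{eq2} collapses to $m_{z_{\X'}}(\Mp) = m_{z_{\X''}}(\M_{\X''})$, giving a value independent of the choice of $\X'$ past $\X'_0$. For additivity on a short exact sequence $0 \to \Nn_{\zr} \to \Mzr \to \L_{\zr} \to 0$ of sub-holonomic $\Dzr$-modules, I would apply the exact functor $(\sp)_*$ to reduce to an exact sequence of sub-holonomic $\Dip$-modules for some $\X' \geq \X'_0$, combine Proposition \ref{prop4.3} (compatibility of infinite supports with short exact sequences) with the additivity of the vertical multiplicities $m_{x'}(\Mp)$ established in Section \ref{section3.4}, and conclude. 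The delicate point of the whole argument is the stabilization step of paragraph two: verifying that once cardinalities stabilize the bonding maps truly preserve individual multiplicities rather than redistributing them, which is forced by \ref{eq2} combined with the bijectivity of $\pp$ on infinite supports past $\X'_0$.
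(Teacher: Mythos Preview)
Your proposal is correct and follows essentially the same approach as the paper's proof: both use the uniform bound $\card(\suppi(\Mp)) \leq m(\M_\X)$ from Theorem \ref{theorem3.14} to force eventual stabilization of the finite surjective system $(\suppi(\Mp))_{\X' \in \G}$, identify $\suppi(\Mzr)$ with the resulting projective limit, and then use equation \eqref{eq2} together with the bijectivity of $\pp$ on infinite supports past the stabilization level to show the multiplicities are independent of the blow-up. The only cosmetic difference is that the paper phrases the multiplicity equality via the pair of relations $m_{x_i''}(\M_{\X''}) \leq m_{x_i'}(\Mp)$ and $\sum_i m_{x_i'}(\Mp) = \sum_i m_{x_i''}(\M_{\X''})$, whereas you observe directly that \eqref{eq2} collapses to a single term once $\pp$ is bijective on supports; these are the same argument.
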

\begin{proof}
We have already seen that if $\Mzr$ is sub-holonomic, then all the modules $\Mp$ are.
It follows from the fact that $\sp(\suppi(\Mzr)) = \suppi (\Mp)$.
Thanks to theorem \ref{theorem3.14}, this is equivalent to have one of the $\Mp$ sub-holonomic.
Assume that all the coadmissible $\Dip$-modules $\Mp$ are sub-holonomic.
The fact that $\Mzr$ is sub-holonomic is not totally obvious.
Indeed, we only know that all the supports $\suppi(\Mp)$ are finite.
But potentially the intersection $\suppi(\Mzr) = \cap_{\X' \in \G} \sp^{-1}(\suppi \Mp)$ is not.
Let us recall that if $\pp : \X'' \to \X'$ are admissible blow-ups of $\X$, then $\pp(\suppi(\M_{\X''})) = \suppi(\Mp)$.
As a consequence, $\#\suppi(\M_{\X''}) \geq \# \suppi(\Mp)$.
The module $\Mzr$ is sub-holonomic only if these cardinals stabilize. 
The closed subsets $(\suppi \Mp)_{\X' \in \G}$ form a compatible system in the projective limit $\zr = \varprojlim_{\X' \in \G} \X'$.
Since $\sp = \pp \circ \spp$, we observe that
\[ \varprojlim_{\X' \in \G} \suppi (\Mp) = \bigcap_{\X' \in \G} \sp^{-1}(\suppi \Mp) = \suppi(\Mzr) . \]
We saw in section \ref{section3.2}, relation \ref{eq2}, that
\[ m(\M_\X) = \sum_{x \in \suppi(\M_\X)} m_x(\M_\X)  = \sum_{x' \in \suppi(\Mp)} m_{x'}(\Mp) . \]
Since by definition, $m_{x'}(\Mp) \geq 1$ for all point $x'$ of the support $\suppi(\Mp)$, the closed set $\suppi(\Mp)$ contains at most $m(\M_\X)$ points.
For any blow-up $\pp : \X'' \to \X'$, $\pp(\suppi(\M_{\X''})) = \suppi(\Mp)$ and $\#\suppi(\M_{\X''}) \geq \# \suppi(\Mp)$.
As a consequence, there exists an admissible blow-up $\X'$ of $\X$ such that for any blow-up $\X'' \to \X'$,
\[ n := \#\suppi(\M_{\X''}) = \# \suppi(\Mp) \leq m(\M_\X) .\]
It follows that the map $\pp$ induces a bijection $\suppi(\M_{\X''}) \overset{\simeq}{\longrightarrow} \suppi(\Mp)$.
We deduce that the infinite support $\suppi(\Mzr) = \varprojlim_{\X' \in \G} \suppi (\Mp)$ is composed of $n$ elements.
Write $\suppi(\Mzr) = \{ x_1 , \dots , x_n \}$, $x'_i = \sp(x_i)$ and $x_i'' = \spp(x_i)$. The map $\pp : \suppi(\M_{\X''}) \to \suppi(\Mp), ~ x_i'' \to x_i'$ is then a bijection.
We know that $m_{x_i''}(\M_{\X''}) \leq m_{x_i'}(\Mp)$ and $\sum_{i=1}^n m_{x_i'}(\Mp)  = \sum_{i=1}^n m_{x_i''}(\M_{\X''})$.
As a consequence, for any $i \in \{ 1 , \dots , n \}$ we have $m_{x_i''}(\M_{\X"}) = m_{x_i'}(\Mp)$.
We set $m_{x_i}(\Mzr) := m_{x_i'}(\Mp) \in \N^*$. This integer does not depend on the admissible blow-up $\X'$.
The additivity of these multiplicities comes from the additivity of the multiplicities for sub-holonomic coadmissibles $\Dip$-modules.
 \end{proof}
 
Let us denote by $\mathcal{SH}_{\zr}$ the full sub-category of $\mathcal{C}_{\zr}$ composed of sub-holonomic $\Dzr$-modules.
Proposition \ref{prop4.3} implies that the category $\mathcal{SH}_{\zr}$ is abelian.
Since the functor $(\sp)_* : \mathcal{C}_{\zr} \to \mathcal{C}_{\X'}$ is exact, theorem \ref{theorem4.7} together with \cite[Corollary 6.24]{hallopeau3} imply that sub-holonomic coadmissible $\Dzr$-modules are of finite length.
To resume, we have the following result.

\begin{cor}\label{cor4.9}
Let $\X' \in \G$.
The equivalence $(\sp)_* : \mathcal{C}_{\zr} \overset{\simeq}{\longrightarrow} \mathcal{C}_{\X'}$ of \cite[proposition 3.2.5]{huyghe} induces an equivalence of abelian categories $(\sp)_* : \mathcal{SH}_{\zr} \overset{\simeq}{\longrightarrow} \mathcal{SH}_{\X'}$.
As a consequence, sub-holonomic coadmissible $\Dzr$-modules are of finite length.
\end{cor}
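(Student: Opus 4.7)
The plan is to invoke Theorem \ref{theorem4.7} in order to see that the existing equivalence $(\sp)_* : \mathcal{C}_{\zr} \overset{\simeq}{\longrightarrow} \mathcal{C}_{\X'}$ of \cite[proposition 3.2.5]{huyghe} restricts to the sub-holonomic full subcategories, and then to leverage the exactness of this functor together with Corollary \ref{cor3.21} to transfer the finite length property from $\mathcal{SH}_{\X'}$ to $\mathcal{SH}_{\zr}$.

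First I would verify that $(\sp)_*$ sends $\mathcal{SH}_{\zr}$ into $\mathcal{SH}_{\X'}$ and that its quasi-inverse sends $\mathcal{SH}_{\X'}$ into $\mathcal{SH}_{\zr}$. Both inclusions are already encoded in Theorem \ref{theorem4.7}(1): writing $\Mzr = \varinjlim_{\X'' \in \G} \spp^{-1}\M_{\X''}$, the coadmissible $\Dzr$-module $\Mzr$ is sub-holonomic if and only if $\Mp = (\sp)_* \Mzr$ is. Since $\mathcal{SH}_{\zr}$ and $\mathcal{SH}_{\X'}$ are the full subcategories whose objects are exactly the sub-holonomic ones, the restriction of $(\sp)_*$ is automatically fully faithful and essentially surjective, hence an equivalence of categories. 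The fact that $\mathcal{SH}_{\zr}$ is abelian, which was noted just before the corollary via Proposition \ref{prop4.3}, is then consistent with this equivalence.

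For the length statement, I would exploit the exactness of $(\sp)_*$ for coadmissible modules. Let $\Mzr \in \mathcal{SH}_{\zr}$ and put $\Mp = (\sp)_* \Mzr \in \mathcal{SH}_{\X'}$. Any strictly descending chain of coadmissible sub-$\Dzr$-modules $\Mzr \supsetneq \Mzr^{(1)} \supsetneq \Mzr^{(2)} \supsetneq \cdots$ yields under $(\sp)_*$ a strictly descending chain $\Mp \supsetneq (\sp)_* \Mzr^{(1)} \supsetneq (\sp)_* \Mzr^{(2)} \supsetneq \cdots$ of coadmissible sub-$\Dip$-modules, strictness being preserved because $(\sp)_*$ is an equivalence. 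By Proposition \ref{prop3.4b} applied to $\Mp$, each $(\sp)_* \Mzr^{(i)}$ is sub-holonomic, and Corollary \ref{cor3.21} bounds the length of such a chain by $\ell(\Mp)$. Hence $\Mzr$ has finite length at most $\ell(\Mp)$.

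I do not expect any serious obstacle here since the heavy lifting has been done in Theorem \ref{theorem4.7}. The only subtlety worth flagging is the asymmetry in the definitions: sub-holonomicity of $\Mzr$ is phrased as finiteness of the intersection $\bigcap_{\X' \in \G} \sp^{-1}(\suppi \Mp)$, whereas sub-holonomicity of $\Mp$ is merely finiteness of $\suppi \Mp$; the equivalence of these two conditions rests on the uniform bound on the cardinalities $\#\suppi(\M_{\X''})$ across all blow-ups $\X'' \to \X'$ provided by the vertical multiplicities, as established in the proof of Theorem \ref{theorem4.7}(1).
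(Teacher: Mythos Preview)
Your proposal is correct and follows essentially the same approach as the paper: invoke Theorem \ref{theorem4.7} to see that the equivalence $(\sp)_*$ restricts to the sub-holonomic subcategories, then use exactness of $(\sp)_*$ to transfer finite length from $\mathcal{SH}_{\X'}$ (where it is known by Corollary \ref{cor3.21}, equivalently \cite[Corollary 6.24]{hallopeau3}) to $\mathcal{SH}_{\zr}$. Your explicit chain argument is a slightly more detailed version of the paper's one-line appeal to exactness, and your citation of Corollary \ref{cor3.21} is in fact the more direct reference since the target category is $\mathcal{SH}_{\X'}$ rather than $\mathcal{SH}_{\X}$.
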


\begin{example}~
We assume that $\X$ is affine with a local coordinate. Let $P \in \Di(\X)$ be a finite differential operator.
Then we know by \cite[proposition 6.2]{hallopeau3} that the coadmissible $\Di$-module $\M_\X := \Di / P$ is sub-holonomic.
It follows that the coadmissible $\Dzr$-module $\Mzr$ associated to $\M_\X$ is sub-holonomic.


\end{example}

We end this section by proving that any sub-holonomic $\Dzr$-modules is generically an integrable connection.

\begin{prop}\label{prop4.11}
A coadmissible $\Dzr$-module $\Mzr$ is sub-holonomic if and only if it is generically an integrable connection: there exists an open dense subset $W$ of $\zr$ such that $(\Mzr)_{|W}$ is locally a free $\O_{W , \Q}$-module of finite rank.
Moreover, this rank coincides with the horizontal multiplicity $m_0(\Mp)$ of the coadmissible sub-holonomic $\Dip$-modules $\Mp$.
\end{prop}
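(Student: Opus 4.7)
The plan is to reduce both implications to the corresponding equivalence for coadmissible $\Dip$-modules established at the end of Section~\ref{section3.3}, using the equivalence of categories $(\mathrm{sp}_{\X})_* : \mathcal{C}_{\zr} \overset{\simeq}{\longrightarrow} \mathcal{C}_{\X}$ coming from Corollary~\ref{cor4.9}, together with the fact that every admissible blow-up of the irreducible formal curve $\X$ is itself irreducible and that $\zr$ is therefore irreducible as a projective limit of irreducible spaces along surjective transition maps.

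For the direct implication I would start from a sub-holonomic $\Mzr$ and transfer sub-holonomicity to $\M_\X := (\mathrm{sp}_{\X})_*\Mzr$ via Corollary~\ref{cor4.9}. The generic freeness statement from Section~\ref{section3.3} then produces a dense open $V \subset \X$ on which $(\M_\X)_{|V}$ is locally a free $\O_{\X,\Q}$-module of rank $m_0(\M_\X)$. Applying Theorem~A for coadmissible $\Dzr$-modules, exactly as in the proof of Proposition~\ref{prop4.6}, I would conclude that $(\Mzr)_{|\mathrm{sp}_{\X}^{-1}(V)}$ is locally a free $\O_{\zr,\Q}$-module of the same rank. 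The preimage $\mathrm{sp}_{\X}^{-1}(V)$ is automatically dense in $\zr$ since $\mathrm{sp}_{\X}$ is surjective and $\zr$ is irreducible.

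For the converse, suppose $(\Mzr)_{|W}$ is locally a free $\O_{\zr,\Q}$-module of rank $n$ for some dense open $W \subset \zr$. Picking any point $w \in W$ and using that the preimages $\sp^{-1}(V')$ for $\X' \in \G$ and $V' \subset \X'$ open form a basis of the topology on $\zr$, I would produce $\X' \in \G$ and an open $V' \subset \X'$ with $w \in \sp^{-1}(V') \subset W$. Pushing forward by $(\sp)_*$ and invoking Theorem~A once more, $(\Mp)_{|V'}$ is locally a free $\O_{\X',\Q}$-module of rank $n$. Since $\X'$ is irreducible, $V'$ is automatically dense in $\X'$, so $\Mp$ is generically an integrable connection and hence sub-holonomic by the result of Section~\ref{section3.3}; Theorem~\ref{theorem4.7} then transports sub-holonomicity back to $\Mzr$. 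The identification $n = m_0(\Mp)$ will follow from the definition of the horizontal multiplicity together with Proposition~\ref{prop3.20}, which ensures that $m_0$ is preserved under admissible blow-ups; in particular the rank is constant over $\zr$ and equals $m_0(\M_{\X'})$ for every $\X' \in \G$.

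The most delicate point in this strategy is the density of $V'$ in the converse direction: an open neighborhood $\sp^{-1}(V') \subset W$ of $w$ only produces, a priori, a potentially very small $V'$, and we have no control on its shape in $\X'$. I expect this to be resolved cleanly by the fact that admissible blow-ups of the irreducible formal curve $\X$ are irreducible, so that any nonempty open subset is dense, eliminating the need for any bookkeeping over connected components and letting the one-dimensional result of Section~\ref{section3.3} apply directly.
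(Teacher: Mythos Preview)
Your proposal hinges on the claim that every admissible blow-up $\X'$ of $\X$ is irreducible (and hence that $\zr$ is irreducible). This is false: the underlying topological space of $\X'$ is its special fiber, and blowing up a closed point introduces an exceptional divisor that is a new irreducible component. In Example~\ref{example2.1}, for instance, the special fiber of $\X'$ has two components (the strict transform and the exceptional line). Consequently $\zr = \varprojlim \X'$ is not irreducible either: if $V = \X \setminus \{0\}$ and $V' \subset E$ is a nonempty open contained in the exceptional divisor of some $\X'$, then $\mathrm{sp}_\X^{-1}(V)$ and $\sp^{-1}(V')$ are disjoint nonempty opens of $\zr$.

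This breaks both directions of your argument. In the forward direction, $\mathrm{sp}_\X^{-1}(V)$ is \emph{not} dense in $\zr$: it misses every $\sp^{-1}(V')$ coming from an exceptional component lying over $\X \setminus V$. In the converse direction, the basic open $\sp^{-1}(V')$ you extract from $W$ may lie entirely inside an exceptional component of $\X'$, so $V'$ need not be dense in $\X'$ and may even map to a single point of $\X$; you cannot conclude that $\Mp$ is generically a connection. The paper handles the forward direction differently and this difference is essential: it takes $W := \bigcup_{\X' \in \G} \sp^{-1}(W_{\Mp})$, i.e.\ the union over \emph{all} blow-ups. Each $W_{\Mp}$ is dense in its own $\X'$ (it contains the smooth locus minus finitely many points, and the non-smooth locus of $\X'$ is a finite set), so every basic open $\sp^{-1}(V')$ meets $\sp^{-1}(W_{\Mp})$ for that same $\X'$, proving density of $W$ in $\zr$ directly. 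For the converse the paper pushes the full dense $W$ down via the closed surjective map $\sp$, rather than shrinking $W$ first; this is what preserves density on the target.
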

\begin{proof}
Assume firstly that the coadmissible $\Dzr$-module $\Mzr = \varinjlim_{\X' \in \G} \sp^{-1} (\Mp)$ is generically an integrable connection and let $W$ be as in the statement.
Then by definition, $(\Mzr)_{|W}$ is locally a coherent $\O_{\zr , \Q}$-module.
Since the Zariski-Riemann space $\zr$ is quasi-compact and a basis of open subsets is given by $\left\{ \sp^{-1}(V), \, \X' \in \G, \, V\subset \X' \, \text{open} \right\}$, we can assume that $W = \sp^{-1}(W')$ for some admissible blow-up $\X' \in \G$ with $W'$ an open subset of $\X'$.
It follows that the coadmissible module $(\Mp)_{|W'} \simeq (\sp)_* (\Mzr)_{|W}$ is a coherent $\O_{W' , \Q}$-module, so an integrable connection.
We deduce from the fact that the specialisation map $\sp : \zr \to \X'$ is both surjective and closed that $W'$ is also dense in $\X'$.
As a consequence, the module $\Mp$ is generically an integrable connection and then sub-holonomic.
Finally, the coadmissible $\Dzr$-module $\Mzr$ is sub-holonomic thanks to corollary \ref{cor4.9}.

Conversely, assume that $\Mzr$ is sub-holomomic. Let $W_{\Mp}$ be the maximal dense open subset of $\X'$ such that $(\Mp)_{W_{\Mp}}$ is a locally free $\O_{\X' , \Q}$-module of finite rank.
Thanks to proposition \ref{prop3.20}, we know that this rank is an integer $m_0$ that does not depend on the choice of the admissible blow-up $\X'$.
Let $\pp : \X'' \to \X'$ be an admissible blow-up in $\G$. Then we have $\pp^{-1}(W_{\Mp}) \subset W_{\M_\X''}$.
Indeed, let us choose a point $x' \in W_{\Mp}$. There exists an affine open neighbourhood $V_{x'}$ of $x'$ such that $(\Mp)_{|V_{x'}} \simeq (\O_{V_{x'} , \Q})^{m_0}$.
We deduce that $(\M_{\X''})_{|\pp^{-1}(V_{x'})} \simeq (\O_{\pp^{-1}(V_{x'}) , \Q})^{m_0}$. Thus, $\pp^{-1}(x') \subset W_{\M_{\X''}}$.
In other words, $(W_{\Mp})_{\X' \in \G}$ is a compatible sequence of open subsets inside $\zr = \varprojlim_{\X' \in \G} \X'$.
Let us introduce $W := \varprojlim_{\X' \in \G} W_{\Mp} = \bigcup_{\X' \in \G} \sp^{-1}(W_{\Mp})$ which is an open subset of the Zariski-Riemann space $\zr$ by definition of its topology.
We now check that $W$ is dense. Let $x \in \zr$ and $V_x = \sp^{-1}(V')$ be an open neighbourhood of $x$.
Then $W_{\Mp} \cap V' \neq \emptyset$ because $W_{\Mp}$ is dense in $\X'$.
As a consequence, $W \cap V_x \neq \emptyset$.
Since the open subsets of the form $V_x$ give rise to a basis of open neighbourhood of $x$, this proves that $W$ is dense in $\zr$.
Finally, we verify that $(\Mzr)_{|W}$ is locally a free $\O_{\zr , \Q}$-module of finite rank $m_0$.
Indeed, for any point $x$ of $W$, let us choose an open neighbourhood $V_x = \sp^{-1}(V')$ of $x$ such that $(\Mp)_{|V'} \simeq (\O_{V' , \Q})^{\m_0}$.
The equivalence of categories given between coadmissible modules over $\zr$ and $\X'$ by the specialization map $\sp$ implies that $(\Mzr)_{| V_x} \simeq (\O_{V_x , \Q})^{\m_0}$.
\end{proof}

\section{Sub-holonomic $\widehat{\D}_{\X_K}$-modules}\label{section6}

In this last section, we associate to coadmissible $\Dcap$-modules a characteristic variety which is a closed subset of the cotangent space $T^* \X_K$. We deduce a notion of sub-holonomicity for these modules.

\subsection{Equivalence of categories between $\mathcal{C}_{\X_K}$ and $\mathcal{C}_{\zr}$}

We assume in this subsection that $\X$ is a smooth and separated formal $\V$-scheme of any dimension $d \in \N$.
We denote by $\X_K$ the rigid analytic space over $K$ associated to the formal smooth scheme $\X$ and by $\tilde{\text{sp}}_{\X'} : \X_K \to \X'$ the specialization map for any admissible blow-up $\X' \in \G$.
These maps induce an arrow $\text{sp} : \X_K \to \zr$ which is injective with dense image for the constructible topology on the Zariski-Riemann space $\zr$.
Let us mention that $\tilde{\text{sp}}_{\X'}$ is then the composition of the specialization map $\sp : \zr \to \X'$ by $\text{sp} : \X_K \to \zr$.
We dispose of the sheaf $\Dcap$ of rapidly converging differential operators introduced in \cite{ABW1} by Ardakov-Wadsley over the smooth rigid analytic space $\X_K$ together with a notion of coadmissible $\Dcap$-modules.
Let us quickly detail the construction of these.
For simplicity, we assume that $\X = \Spf \mathcal{A}$ is affine with a system of local coordinates.
Then $\X_K = \mathrm{Sp} A$ is affinoid with $A = \mathcal{A}\otimes_\V K$ such that its tangent sheaf $\tau_{\X_K}$ if a free $\O_{\X_K}$-module of rank $d$.
We denote by $\partial_1, \dots, \partial_d$ the basis of $T_{\X_K}(\X_K)$ and note $\tau := \tau_\X(\X) \simeq \bigoplus_{i=1}^d \mathcal{A}\cdot \partial_i$.
The sheaf $\Dcap$ can be written as a projective limit of coherent sheaves of differential operators $\D_n$ defined over the sites $\X_K(\varpi^n \tau)$ of $\varpi^n \tau$-admissible subdomains of $\X_K$, see \cite[section 4.6-4.7]{ABW1}.
More precisely, let $Y$ be an admissible open subset of $\X_K$. Then $Y$ is $\varpi^n \tau$-admissible for $n$ large enough, ie $Y \in \X_K(\varpi^n \tau)$.
Thus, $\D_n(Y)$ is well defined for $n$ sufficiently large and $\Dcap(Y) := \varprojlim_n \D_n(Y)$ is a Fr\'echet-Stein algebra.
One has
\[ \D_n(\X_K) = \left\{ \sum_{\alpha \in \N^d} a_\alpha \cdot \partial^\alpha,~~ a_\alpha \in A ~,  ~~\lim_{|\alpha| \to \infty} |\varpi|^{-n |\alpha|} \cdot |a_\alpha| = 0 \right\},\]
\[ \Dcap(\X_K) = \left\{ \sum_{\alpha \in \N^d} a_\alpha \cdot \partial^\alpha, \, a_\alpha\in A, \, \lim_{|\alpha| \to \infty} |\varpi|^{-n |\alpha|} \cdot |a_\alpha| = 0 \hspace{0.3cm} \forall n \in \N \right\} . \]
A $\Dcap$-module $\M$ is said to be coadmissible if there exists an admissible affinoid covering $(Y_i)$ of $\X_K$ such that $\M(Y_i)$ is a coadmissible $\Dcap(Y_i)$-module and such that for any affinoid subdomain $Z$ of $Y_i$, the natural map $\Dcap(Z) \widehat{\otimes}_{\Dcap(Y_i)} \M(Y_i) \to \M(Z)$ is an isomorphism.
Let us recall that a $\Dcap(Y_i)$-module is coadmissible if it is isomorphic to a projective limit of coherent $\D_n(Y_i)$-modules $\M_n(Y_i)$ such that $\D_n(Y_i) \otimes_{\D_{n+1}(Y_i)} \M_{n+1}(Y_i) \simeq \M_n(Y_i)$.

\begin{prop}\label{prop5.1}
Let $\X$ be a smooth and separated formal scheme of dimension $d$ over $\V$. 
The map $\mathrm{sp} : \X_K \to \zr$ induces an exact equivalence of categories between coadmissible $\widehat{\D}_{\X_K}$-modules and coadmissible $\Dzr$-modules.
\end{prop}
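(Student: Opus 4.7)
The plan is to reduce the statement to two equivalences already present in the literature. The specialization map factors as $\tilde{\mathrm{sp}}_\X = \mathrm{sp}_\X \circ \mathrm{sp}$, where $\mathrm{sp}_\X : \zr \to \X$ is the projection from the Zariski-Riemann space. On one hand, the comparison of the Ardakov-Wadsley construction with that of Huyghe-Schmidt-Strauch, recalled in the introduction, yields an exact equivalence $(\tilde{\mathrm{sp}}_\X)_* : \mathcal{C}_{\X_K} \overset{\simeq}{\longrightarrow} \mathcal{C}_\X$. On the other hand, \cite[proposition 3.2.5]{huyghe} provides an exact equivalence $(\mathrm{sp}_\X)_* : \mathcal{C}_\zr \overset{\simeq}{\longrightarrow} \mathcal{C}_\X$. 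Abstractly these give an equivalence $\mathcal{C}_{\X_K} \simeq \mathcal{C}_\zr$; the task is to realize it concretely as $\mathrm{sp}_*$.

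To construct the direct functor, let $\M$ be a coadmissible $\Dcap$-module. Since the Zariski-Riemann topology has a basis of open subsets of the form $\mathrm{sp}_{\X'}^{-1}(V)$ for $\X' \in \G$ and $V \subset \X'$ affine open, the relation $\tilde{\mathrm{sp}}_{\X'} = \mathrm{sp}_{\X'} \circ \mathrm{sp}$ yields
\[ (\mathrm{sp}_* \M)(\mathrm{sp}_{\X'}^{-1}(V)) = \M(\tilde{\mathrm{sp}}_{\X'}^{-1}(V)) = \left( (\tilde{\mathrm{sp}}_{\X'})_* \M \right)(V), \]
hence $\mathrm{sp}_* \M \simeq \varinjlim_{\X' \in \G} \mathrm{sp}_{\X'}^{-1} (\tilde{\mathrm{sp}}_{\X'})_* \M$. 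Applying the Ardakov-Wadsley/Huyghe-Schmidt-Strauch comparison to each admissible blow-up $\X'$ (whose rigid analytic generic fiber remains $\X_K$) shows that $(\tilde{\mathrm{sp}}_{\X'})_* \M$ is a coadmissible $\Dip$-module. For $\X'' \geq \X'$ in $\G$ with transition map $\pp : \X'' \to \X'$, the identity $\tilde{\mathrm{sp}}_{\X'} = \pp \circ \tilde{\mathrm{sp}}_{\X''}$ produces natural isomorphisms $(\pp)_* (\tilde{\mathrm{sp}}_{\X''})_* \M \simeq (\tilde{\mathrm{sp}}_{\X'})_* \M$ satisfying the cocycle condition, so $\mathrm{sp}_* \M$ is indeed a coadmissible $\Dzr$-module.

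For the quasi-inverse, given $\Mzr \in \mathcal{C}_\zr$, push it forward to $\X$ via $\mathrm{sp}_\X$ to obtain the coadmissible $\Di$-module $(\mathrm{sp}_\X)_* \Mzr$, then apply the inverse of the Ardakov-Wadsley equivalence to produce a coadmissible $\Dcap$-module $\M$ with $(\tilde{\mathrm{sp}}_\X)_* \M \simeq (\mathrm{sp}_\X)_* \Mzr$. Since $(\mathrm{sp}_\X)_* \circ \mathrm{sp}_* \simeq (\tilde{\mathrm{sp}}_\X)_*$ by the factorization of $\tilde{\mathrm{sp}}_\X$, up to natural isomorphism the composition $(\mathrm{sp}_\X)_* \circ \mathrm{sp}_*$ is the Ardakov-Wadsley equivalence. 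Because $(\mathrm{sp}_\X)_*$ is an equivalence, so is $\mathrm{sp}_*$, and exactness is inherited from the two exact component equivalences.

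The main obstacle is extending the Ardakov-Wadsley/Huyghe-Schmidt-Strauch comparison from $\X$ to an arbitrary admissible blow-up $\X'$, which a priori need not be smooth even though its rigid analytic generic fiber $\X'_K \simeq \X_K$ is. One reduces to the smooth case by dominating $\X'$ by a smooth formal model in $\G$, using that the category $\mathcal{C}_{\X_K}$ is intrinsic to the rigid space $\X_K$ and does not depend on the chosen formal model. A secondary point, routine but requiring verification, is that the transition morphisms in the inductive limit $\varinjlim_{\X'} \mathrm{sp}_{\X'}^{-1} (\tilde{\mathrm{sp}}_{\X'})_* \M$ carry the coadmissibility data required by the definition of a coadmissible $\Dzr$-module; this follows from the projection formula for the proper admissible blow-ups $\pp : \X'' \to \X'$ together with the adjunction $\pp^{-1} \dashv (\pp)_*$.
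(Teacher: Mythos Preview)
Your strategy of factoring $\mathrm{sp}$ through $\X$ and invoking the two equivalences $(\tilde{\mathrm{sp}}_\X)_* : \mathcal{C}_{\X_K} \simeq \mathcal{C}_\X$ and $(\mathrm{sp}_\X)_* : \mathcal{C}_\zr \simeq \mathcal{C}_\X$ is natural, and differs from the paper's approach, which works directly at the level of sections: it matches the basis $\{\text{quasi-compact admissible }Y \subset \X_K\}$ with the basis $\{\mathrm{sp}_{\X'}^{-1}(\mathcal{Y}') : \X' \in \G,\ \mathcal{Y}' \subset \X'\text{ affine}\}$ and verifies explicitly that $\D_n(Y) \simeq \widehat{\D}^{(0)}_{\X',n,\Q}(\mathcal{Y}')$ as Banach $K$-algebras whenever $Y$ is represented by $\mathcal{Y}'$. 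From this the paper reads off that the Fr\'echet--Stein presentations agree and hence that coadmissibility is preserved on both sides.

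The gap in your argument is exactly the one you flag as the ``main obstacle'', and your proposed fix does not close it. You need $(\tilde{\mathrm{sp}}_{\X'})_* \M$ to be a coadmissible $\Dip$-module for every $\X' \in \G$, not just for the smooth model $\X$, because the definition of a coadmissible $\Dzr$-module requires the full compatible system $(\M_{\X'})_{\X'\in\G}$. The comparison you cite from the introduction is asserted only for smooth formal models, and your suggestion to dominate an arbitrary $\X'$ by a smooth model in $\G$ is unjustified: in dimension $d>1$ smooth formal models are not known to be cofinal among admissible blow-ups, and even in dimension one this would require an argument. More fundamentally, the comparison $(\tilde{\mathrm{sp}}_\X)_* : \mathcal{C}_{\X_K} \simeq \mathcal{C}_\X$ that you take as input is, at the sheaf-theoretic level with matching Fr\'echet--Stein structures, essentially part of what Proposition~\ref{prop5.1} establishes; the paper explicitly notes mid-proof that compatibility of the Fr\'echet--Stein structures is not yet known and deduces it only after the section-by-section identification. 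So your reduction is partly circular. The paper avoids all of this by proving the Banach-level identification $\D_n(Y)\simeq \widehat{\D}^{(0)}_{\X',n,\Q}(\mathcal{Y}')$ directly for an arbitrary (possibly singular) blow-up $\X'$, which is available because the sheaves $\Dkqp$ of \cite{huyghe} are constructed on all admissible blow-ups, not only smooth ones.
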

\begin{proof}
Let $U = \Spf \mathcal{A}$ be an affine open subset of $\X$ with a system of local coordinates and we keep the notations used above: $U_K = \mathrm{Sp} A$ with $A = \mathcal{A}\otimes_\V K$, its tangent sheaf $\tau_{U_K}$ if a free $\O_{U_K}$-module with basis $\partial_1, \dots, \partial_d$ and $\tau := \tau_\X(U) \simeq \bigoplus_{i=1}^d \mathcal{A}\cdot \partial_i$.
We have
\[ \Dcap(U_K) = \left\{ \sum_{\alpha \in \N^d} a_\alpha \cdot \partial^\alpha, \, a_\alpha\in A, \, \lim_{|\alpha| \to \infty} |\varpi|^{-n |\alpha|} \cdot |a_\alpha| = 0 \hspace{0.3cm} \forall n \in \N \right\} \simeq \Di(U) .\]
Recall that for any admissible blow-up $\phi : \X' \to \X$, $\Di(U) \simeq \Dip(\phi^{-1}(U))$.
Let $Y$ be a quasi-compact admissible open subset of $\X_K$ contained in the inverse image of some affine open subset of $\X$ admitting local coordinates.
We denote by $\text{sp}_*(Y) := \bigcup W$, where the union is taken over all open subsets $W$ of $\zr$ such that $W \cap \text{sp}(\X_K) \subset \text{sp}(Y)$, the open subset of $\zr$ induced by $Y$.
Since the open subset $Y$ is assumed to be quasi-compact, it is represented by an open subset $\mathcal{Y}'$ of some admissible blow-up $\X'$ of $\X$.
In this case, we have $\text{sp}_*(Y) = \sp^{-1}(\mathcal{Y}')$ where $\sp : \zr \to \X'$ is the specialization map. As before, we see that $\Dcap(Y) \simeq \Dip(\mathcal{Y}')$.
For any admissible blow-up $\pp : \X'' \to \X'$, we have $\text{sp}_*(Y) = \spp^{-1}(\mathcal{Y}'')$ with $\mathcal{Y}'' = \pp^{-1}(\mathcal{Y})$.
Recall that $\Dzr := \varinjlim_{\X' \in \G} \sp^{-1} \Dip$. Thus, we get an isomorphism of $K$-algebras $\Dcap(Y) \simeq \Dzr(\text{sp}_*(Y))$.
The set of quasi-compact admissible open subsets $Y$ is a basis of open subsets for $\X_K$ and the open subsets of the form $\text{sp}_*(Y) = \sp^{-1}(\mathcal{Y}')$ define a basis for the topology on $\zr$.
As a consequence, we obtain an isomorphism between the sheaves $\Dcap$ and $\Dzr$ associating $\Dcap(Y)$ to $\Dzr(\text{sp}_*(Y))$.
Let us emphazise that at this point we do not prove that this isomorphism preserves the Fr\'echet-Stein structures.
Similarly, we associate to a $\Dcap$-module $\M$ a $\Dzr$-module $\Mzr$ via the local association $\Mzr(\text{sp}_*(Y)) := \M(Y)$ and vice-versa.
It remains to check that coadmissibility is preserved on both sides.
By definition, there exists an admissible affinoid open covering $(Y_i = \mathrm{Sp} A_i)_i$ of $\X_K$ such that $\M(Y_i)$ is a coadmissible $\Dcap(Y_i)$-module.
For all $i$, let us fix an admissible blow-up $\X_i$ of $\X$ such that $Y_i$ is represented by an affine open subset $\mathcal{Y}_i = \Spf \mathcal{A}_i$ of $\X_i$.
The affinoid $Y_i$ is $\varpi^n \tau$-admissible for $n \in \N$ large enough, say $n \geq n_0$. We can assume that $n_0 \geq k_{\X_i}$.
Then $\M(Y_i) \simeq \varprojlim_{n \geq n_0} \M_n(Y_i)$ where $M_n(Y_i)$ is a coherent $\D_n(Y_i)$-module.
We have 
\[ \D_n(Y_i) = \left\{ \sum_{\alpha \in \N^d} a_\alpha \cdot \partial^\alpha,~~ a_\alpha \in A_i ~,  ~~\lim_{|\alpha| \to \infty} |\varpi|^{-n |\alpha|} \cdot |a_\alpha| = 0 \right\} \simeq \widehat{\D}^{(0)}_{\X_i , n , \Q}(\mathcal{Y}_i) .\]
In particular, $\M_{\mathcal{Y}_i , n} (\mathcal{Y}_i) := \M_n(Y_i)$ is a coherent $\widehat{\D}^{(0)}_{\X_i , n , \Q}(\mathcal{Y}_i)$-module and the projective limit $\M_{\mathcal{Y}_i}(\mathcal{Y}_i) := \M(Y_i) \simeq \varprojlim_{n \geq n_0} \M_{\mathcal{Y}_i , n} (\mathcal{Y}_i)$ is a coadmissible $\D_{\X_i , \infty}(\mathcal{Y}_i)$-module.
Thanks to \cite[theorem 3.1.13]{huyghe}, $\M_{\mathcal{Y}_i}(\mathcal{Y}_i)$ uniquely determines a coadmissible $\D_{\mathcal{Y}_i , \infty}$-module $\M_{\mathcal{Y}_i}$ over $\mathcal{Y}_i$.
Let $\Mzr$ be the $\Dzr$-module associated to $\M$.
By \cite[proposition 3.2.5]{huyghe}, the functor $(\text{sp}_{\mathcal{Y}_i})_* : \text{sp}_{\X_i}^{-1}(\mathcal{Y}_i) \to \mathcal{Y}_i$ induces an equivalence of categories between coadmissible $\D_{\text{sp}_{\X_i}^{-1}(\mathcal{Y}_i)}$-modules and coadmissible $\D_{\mathcal{Y}_i , \infty}$-modules.
It follows that $(\Mzr)_{|\text{sp}_{\X_i}^{-1}(\mathcal{Y}_i)}$ is coadmissible since the module $(\text{sp}_{\mathcal{Y}_i})_*(\Mzr)_{|\text{sp}_{\X_i}^{-1}(\mathcal{Y}_i)} \simeq \M_{\mathcal{Y}_i}$ is.
The open subsets $\text{sp}_{\X_i}^{-1}(\mathcal{Y}_i) = \text{sp}_*(Y_i)$ define an open covering of $\zr$. Thus, the module $\Mzr$ is locally coadmissible.
Moreover, this coadmissibily is compatible with restrictions to open subsets because $\M$ is also compatible with such restictions.
As a consequence, $\Mzr$ is a coadmissible $\Dzr$-module.
In particular, this also implies that the isomorphism between $\Dcap$ and $\Dzr$ induced by the map $\text{sp} : \X_K \to \zr$ is compatible with the Fr\'echet-Stein structure.
Similarly, we verify that the $\Dcap$-module associated to a coadmissible $\Dzr$-module is also coadmissible.
\end{proof}

\subsection{Sub-holonomic $\Dcap$-modules}

\paragraph{Quasi-compact situation}

We now return to the case of a smooth, quasi-compact and connected formal curve $\X$.
Let $\Mzr = \varinjlim_{\X' \in \G} \sp^{-1} (\Mp)$ be a sub-holonomic $\Dzr$-module.
Then its infinite support $\suppi(\Mzr) = \{x_1 , \dots , x_s\}$ consists of a finite set of closed points in bijection with $\suppi(\Mp) = \{x_1' , \dots , x_s'\}$ for some admissible blow-up $\X'$ of $\X$.
In particular, since the specialization map $\text{sp} : \X_K \to \X'$ is surjective onto the closed points of $\X'$, the points of $\suppi(\Mzr)$ corresponds bijectively to points of the rigid analytic space $\X_K$.
In other words, there exists unique points $\tilde{x}_1, \dots , \tilde{x}_s \in \X_K$ such that $\sp(\tilde{x}_i) = x_i$.
Thus, we identify $\suppi(\Mzr)$ with points of $\X_K$ via the specialization map $\text{sp} : \X_K \to \zr$.
Let us mention that $\suppi(\Mzr)$ is also in bijection with $\suppi(\M_{\X''})$ for any admissible blow-up $\X'' \to \X'$, so the choice of the admissible blow-up does not matter.

\begin{definition}
A coadmissible $\Dcap$-module $\M$ is said to be sub-holonomic if its associated coadmissible $\Dzr$-module $\Mzr$ is sub-holonomic.
We define its infinite support by $\suppi(\M) := \mathrm{sp}^{-1}(\suppi(\Mzr))$.
\end{definition}

We denote by $\mathcal{SH}_{\X_K}$ the full subcategory of coadmissible $\Dcap$-modules consisting of sub-holonomic objects.
By definition of sub-holonomic $\Dcap$-modules, the equivalence of categories between coadmissible $\Dcap$-modules and $\Dzr$-modules given by the specialization map $\text{sp} : \X_K \to \zr$ induces an equivalence of categories between $\mathcal{SH}_{\X_K}$ and $\mathcal{SH}_{\zr}$.
In particular, the category $\mathcal{SH}_{\X_K}$ of sub-holonomic $\Dcap$-modules is abelian.
Let $\M$ be a sub-holonomic $\Dcap$-module. Its infinite support $\suppi(\M)$ is in bijection with $\suppi(\Mzr)$.
In particular, we can add to the points of $\suppi(\M)$ multiplicities coming from the ones of $\suppi(\Mzr)$: for any $x \in \suppi(\M)$, $m_x(\M) := m_{\text{sp}(x)}(\Mzr)$.
The following result immediately comes from theorem \ref{theorem4.7}, propositions \ref{prop4.3} and \ref{prop5.1}.

\begin{prop}\label{prop5.3}
Let $0 \to \M \to \Nn \to \L \to 0$ be an exact sequence of coadmissible $\Dcap$-modules.
Then $\suppi(\Nn) = \suppi(\M) \cup \suppi(\L)$. In particular, $\Nn$ is sub-holonomic if and only if the modules $\M$ and $\L$ are.
In this case, we have $m_x(\Nn) = m_x(\M) + m_x(\L)$ for any $x\in \suppi(\Nn)$.
\end{prop}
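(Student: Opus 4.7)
The plan is to transport the statement across the equivalence of categories established in Proposition \ref{prop5.1}, reducing everything to the corresponding assertions at the level of the Zariski--Riemann space $\zr$, where the work has already been carried out.

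First, I would apply the exact equivalence of categories $\mathrm{sp}_* : \mathcal{C}_{\X_K} \overset{\sim}{\longrightarrow} \mathcal{C}_{\zr}$ of Proposition \ref{prop5.1} to the given short exact sequence $0 \to \M \to \Nn \to \L \to 0$ to obtain a corresponding short exact sequence
\[ 0 \to \Mzr \to \Nn_{\zr} \to \L_{\zr} \to 0 \]
of coadmissible $\Dzr$-modules. Since sub-holonomicity of a coadmissible $\Dcap$-module was defined in the paragraph preceding the proposition as sub-holonomicity of the associated coadmissible $\Dzr$-module, and since the infinite support was defined via pulling back through $\mathrm{sp} : \X_K \to \zr$, we have reduced to the level of $\zr$.

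Next, I would invoke Proposition \ref{prop4.3} which gives $\suppi(\Nn_{\zr}) = \suppi(\Mzr) \cup \suppi(\L_{\zr})$. Taking the preimage under the specialization map $\mathrm{sp} : \X_K \to \zr$ (which commutes with unions) and using the defining identity $\suppi(\M) = \mathrm{sp}^{-1}(\suppi(\Mzr))$ and analogously for $\Nn$ and $\L$, this yields directly the equality $\suppi(\Nn) = \suppi(\M) \cup \suppi(\L)$. In particular, $\suppi(\Nn)$ is finite if and only if both $\suppi(\M)$ and $\suppi(\L)$ are finite, which is precisely the equivalence of sub-holonomicity claimed; alternatively, one may cite part (1) of Theorem \ref{theorem4.7} after passing through the equivalence of categories between $\mathcal{C}_{\X'}$ and $\mathcal{C}_{\zr}$ given by Corollary \ref{cor4.9}.

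Finally, for the additivity of multiplicities, suppose $\Nn$ (and hence $\M$ and $\L$) are sub-holonomic and fix a point $x \in \suppi(\Nn)$. By the definition just before the proposition, $m_x(\Nn) = m_{\mathrm{sp}(x)}(\Nn_{\zr})$, and similarly for $\M$ and $\L$ whenever $x$ lies in their respective supports (otherwise the corresponding multiplicity is taken to be zero). The additivity $m_{\mathrm{sp}(x)}(\Nn_{\zr}) = m_{\mathrm{sp}(x)}(\Mzr) + m_{\mathrm{sp}(x)}(\L_{\zr})$ is furnished by part (2) of Theorem \ref{theorem4.7}, and the conclusion $m_x(\Nn) = m_x(\M) + m_x(\L)$ follows at once. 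The only subtlety to check is that when $x \in \suppi(\Nn)$ but $x$ fails to lie in, say, $\suppi(\M)$, the convention $m_x(\M) = 0$ is consistent with the multiplicities coming from the $\Dzr$-side; this is immediate from the definitions since in that case $\mathrm{sp}(x) \notin \suppi(\Mzr)$, so no nontrivial contribution is lost. No step requires substantial new input beyond the cited results, so there is no real obstacle; the content of the proof is essentially bookkeeping across the two equivalences.
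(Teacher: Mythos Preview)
Your proposal is correct and matches the paper's approach exactly: the paper states that this proposition ``immediately comes from theorem \ref{theorem4.7}, propositions \ref{prop4.3} and \ref{prop5.1}'' without spelling out the details, and your argument unpacks precisely this chain of reductions via the equivalence $\mathrm{sp}_*$ of Proposition \ref{prop5.1}, the support union of Proposition \ref{prop4.3}, and the multiplicity additivity of Theorem \ref{theorem4.7}(2).
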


It remains to obtain an horizontal multiplicity for sub-holonomic $\Dcap$-modules.
For that, we prove that such a module is generically an integrable connection.
We verify exactly as in proposition \ref{propconnections} that coherent $\O_{\X_K}$-module endowed with an integrable connection coincide with coadmissible $\Dcap$-modules which are also $\O_{\X_K}$-coherent.
This result was also demonstrated by Ardakov-Wadsley in \cite[theorem B]{ardakov2}.
Again, we identify integrable connections with $\O_{\X_K}$-coherent coadmissible $\Dcap$-modules.

\begin{prop}\label{prop5.4}
A coadmissible $\Dcap$-module $\M$ is sub-holonomic if and only if there exists an admissible open dense subset $W$ of $\X_K$ obtained by removing a finite number of points (one can choose $W = \X_K \backslash \suppi(\M)$) such that $\M_{|W}$ is locally a free $\O_{\X_K}$-module of finite rank.
Moreover, this rank coincides with the horizontal multiplicity $m_0(\Mzr)$ of its associated coadmissible $\Dzr$-module $\Mzr$.
\end{prop}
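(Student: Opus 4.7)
The plan is to reduce to Proposition \ref{prop4.11} via the equivalence of abelian categories $\mathcal{C}_{\X_K} \simeq \mathcal{C}_{\zr}$ of Proposition \ref{prop5.1}. Let $\Mzr$ be the coadmissible $\Dzr$-module associated with $\M$. By the definition of sub-holonomicity for $\Dcap$-modules, $\M$ is sub-holonomic if and only if $\Mzr$ is, and by Proposition \ref{prop4.11} the latter is equivalent to $\Mzr$ being locally a free $\O_{\zr,\Q}$-module of rank $m_0(\Mzr)$ on some dense open subset of $\zr$. It therefore suffices to check that ``locally free of finite rank over the structure sheaf'' and ``dense open'' transfer between $\X_K$ and $\zr$ under the identifications of Proposition \ref{prop5.1}.

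First I would transfer the $\mathcal{O}$-coherence. The construction of Proposition \ref{prop5.1} matches any quasi-compact admissible open $Y \subset \X_K$, represented by an open $\mathcal{Y}' \subset \X'$ for some admissible blow-up $\X' \in \G$, with the open subset $\mathrm{sp}_*(Y) = \mathrm{sp}_{\X'}^{-1}(\mathcal{Y}') \subset \zr$, identifying the Fr\'echet--Stein algebras $\Dcap(Y) \simeq \Dzr(\mathrm{sp}_*(Y))$ and the sections $\M(Y) \simeq \Mzr(\mathrm{sp}_*(Y))$. This identification restricts to an isomorphism of structure sheaves $\O_{\X_K}(Y) \simeq \O_{\zr,\Q}(\mathrm{sp}_*(Y))$, so for any admissible open $W \subset \X_K$ covered by such affinoids, $\M_{|W}$ is locally free of rank $n$ over $\O_{\X_K}$ if and only if $(\Mzr)_{|\mathrm{sp}_*(W)}$ is locally free of the same rank over $\O_{\zr,\Q}$.

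Next I would handle the generic locus in each direction. Assume $\M$ is sub-holonomic: then $\suppi(\Mzr)$ is finite and $\suppi(\M) = \mathrm{sp}^{-1}(\suppi(\Mzr))$ is a finite set of closed points of $\X_K$. The complement $W := \X_K \backslash \suppi(\M)$ is therefore admissible open and dense, and it corresponds under $\mathrm{sp}_*$ to $\zr \backslash \suppi(\Mzr)$, on which by Proposition \ref{prop4.11} the module $\Mzr$ is locally a free $\O_{\zr,\Q}$-module of rank $m_0(\Mzr)$. The previous step then gives that $\M_{|W}$ is locally free of rank $m_0(\Mzr)$ over $\O_{\X_K}$. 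Conversely, if $\M_{|W}$ is locally free of finite rank over $\O_{\X_K}$ on some dense admissible open $W$, the same transport produces a corresponding open of $\zr$ on which $\Mzr$ is locally free of the same rank over $\O_{\zr,\Q}$; density in $\zr$ follows from the density of $\mathrm{sp}(\X_K)$ in $\zr$ for the constructible topology. Proposition \ref{prop4.11} applied on $\zr$ then gives that $\Mzr$ is sub-holonomic, hence so is $\M$.

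The hard part will be the bookkeeping around admissible-open versus honest topology. In particular one has to check that for $W = \X_K \backslash \suppi(\M)$ the image $\mathrm{sp}_*(W)$ really equals $\zr \backslash \suppi(\Mzr)$ (which rests on the equality $\suppi(\M) = \mathrm{sp}^{-1}(\suppi(\Mzr))$ together with the surjectivity of $\mathrm{sp}$ onto the closed points of every admissible blow-up), and, in the converse direction, that the open of $\zr$ produced from an admissible affinoid cover of a dense $W$ is itself dense. Once these identifications are in place the equality of ranks is automatic, since on each affinoid of a common refinement the two structure sheaves are identified by Proposition \ref{prop5.1}.
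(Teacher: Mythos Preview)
Your reduction to Proposition \ref{prop4.11} is natural and would establish the equivalence together with the equality of ranks, but it does not quite deliver the parenthetical sharpening ``one can choose $W = \X_K \backslash \suppi(\M)$''. Proposition \ref{prop4.11} only produces \emph{some} dense open $W_{\zr}\subset\zr$ on which $\Mzr$ is locally free (namely $W_{\zr}=\bigcup_{\X'}\sp^{-1}(W_{\Mp})$, built from the smooth loci of the blow-ups via Lemma \ref{lemma 3.12}); it does not assert local freeness on the full complement $\zr\backslash\suppi(\Mzr)$. So even granting your identification $\mathrm{sp}_*(\X_K\backslash\suppi(\M)) = \zr\backslash\suppi(\Mzr)$, you cannot conclude that $\M$ is locally free on all of $\X_K\backslash\suppi(\M)$ by transport alone.

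The paper circumvents this by arguing directly on $\X_K$ rather than through $\zr$: for any affinoid $Y\subset \X_K\backslash\suppi(\M)$ one chooses a blow-up $\X'$ large enough that $Y$ is represented by an open $\mathcal{Y}'\subset\X'\backslash\suppi(\Mp)$, and then invokes Lemma \ref{lemma3.8} at each finite level $k$ to see that $\M_k(Y)\simeq\Mkp(\mathcal{Y}')$ is $\O_{\X_K}(Y)$-coherent. Since $\O_{\X_K}(Y)$ is a PID and the connection kills torsion, each $\M_k(Y)$ is free of rank $m_0(\Mp)$, and the projective limit inherits this. The point is that the singular points of $\X'$ that obstruct Lemma \ref{lemma 3.12} on the formal side are invisible on the rigid side, because $\O_{\X_K}(Y)$ is always regular; this is exactly what your transfer through $\zr$ misses. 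Your converse direction and the density bookkeeping are fine in spirit, though note the paper's converse is more elementary: a dense admissible open in a quasi-compact rigid curve has finite complement, so $\suppi(\M)$ is trapped in a finite set directly, without passing through $\zr$ at all.
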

\begin{proof}
Let $\M$ be a sub-holonomic $\Dcap$-module and $Y$ an affinoid open subset of $\X_K$ inside $\X_K \backslash \suppi(\M)$.
We fix an admissible blow-up $\X'$ of $\X$ such that $Y$ comes from an affine open subset $\mathcal{Y}'$ of $\X'$ and such that $\suppi(\M) \simeq \suppi(\Mp)$, where $\Mzr = \varinjlim_{\X' \in \G} \sp^{-1} (\Mp)$ is the coadmissible $\Dzr$-module corresponding to $\M$.
The affinoid open subset $Y$ is $\varpi^k \tau$-admissible for $k \in \N$ large enough, say $k \geq k_0 \geq \kp$.
We have $\M(Y) \simeq \varprojlim_{k \geq k_0} \M_k(Y)$ as a coadmissible $\Dcap(Y) = \varprojlim_{k \geq k_0}  \D_k(Y)$-module.
We have seen that $\M_k(Y) \simeq \Mkp(\mathcal{Y}')$ as a coherent $\D_k(Y) \simeq \Dkqp(\mathcal{Y}')$-module.
The fact that $Y \subset \X_K \backslash \suppi(\M)$ implies that $\mathcal{Y}' \subset \X' \backslash \suppi(\Mp)$.
It then follows by lemma \ref{lemma3.8} that $\M_k(Y)$ is a coherent $\O_{\X_K}(Y)$-module.
Since $\X_K$ is a smooth rigid analytic curve, the affinoid $K$-algebra $\O_{\X_K}(Y)$ is a principal ideal domain.
The structure of $\D_k(Y)$-module provides a connection on $\M_k(Y)$.
In particular, this implies that $\M_k(Y)$ is torsion free as $\O_{\X_K}(Y)$-module and we deduce as in \cite[section 1.3.10]{adriano} that $\M_k(Y)$ is free over $\O_{\X_K}(Y)$ of finite rank.
Necessarily, this rank equals the rank $m_0(\Mp)$ of $\Mkp$ where this is a locally free $\O_{\X' , \Q}$-module.
In particular, we have proved that for $k$ large enough, $\M_k(Y)$ is a free $\O_{\X_K}(Y)$-module of finite rank $m_0(\Mp)$ independent of $k$.
It follows that $\M(Y) \simeq \varprojlim_{k \geq k_0} \M_k(Y)$ is also a free $\O_{\X_K}(Y)$-module of rank $m_0(\Mp)$.
We recall that $m_0(\Mzr) = m_0(\Mp)$. Thus, $\M$ is locally on affinoid open subsets of $\X_K \backslash \suppi(\M)$ a free $\O_{\X_K}$-module of rank $m_0(\Mzr)$.
Conversely, assume that there exists an admissible open subset $W$ of $\X$ obtained by removing a finite set of points such that $\M_{|W}$ is a coherent $\O_{\X_K}$-module.
Then $\suppi(\M_{|W}) = \emptyset$ and $\suppi(\M) \subset \X_K \backslash W$ is finite.
Thus, the coadmissible $\Dcap$-module $\M$ is sub-holonomic.
\end{proof}

We define the horizontal multiplicity $m_0(\M)$ of a sub-holonomic $\Dcap$-module $\M$ to be this rank.
This is clearly additive on short exacte sequences of sub-holonomic $\Dcap$-modules.

\begin{example}
Let $\X = \Spf (\V\langle x \rangle)$ be the affine formal line over $\V$.
We identify the Zariski Riemann space $\zr$ with the rigid analytic unit disc $\X_K = \mathrm{Sp}(K\langle x \rangle)$ and we identify $\Dcap$ with the sheaf $\Dzr$ via proposition \ref{prop5.1}.
Let $\X_K^\circ = \X_K \backslash \{x = 0\}$ and $j : \X_K^\circ \hookrightarrow \X_K$ be the natural inclusion.
Then $j_* \O_{\X_K^\circ}$ is a coadmissible $\D_{\X_K}$-module by  \cite[proposition 6.2]{huyghe2}.
Moreover, this is sub-holonomic with $\suppi(j_* \O_{\X_K^\circ}) = \{ x=0 \}$.
Indeed, the coadmissible module $j_* \O_{\X_K^\circ}$ is a connection over $\X_K^\circ$ (where this is exactly $\O_{\X_K^\circ}$).
Thus, it has an horizontal multiplicity equal to one.
A formal model for $j_* \O_{\X_K^\circ}$ is for example the coadmissible $\Di$-module $\M_\X = \Di / (x\cdot \partial -1)$.
We have $\suppi(\M_\X) = \{ x = 0 \}$ with multiply one at $x = 0$.
Thus, for any admissible blow-up $\phi : \X' \to \X$, $\suppi(\phi^! \M_\X) = \{ x = 0 \}$ with multiplicity one.
As a consequence, the infinite support of the coadmissible module $j_* \O_{\X_K^\circ}$ is a point with multiplicity one at this point.
To conclude, $j_* \O_{\X_K^\circ}$ is a sub-holonomic $\Dcap$-module with two non-zero multiplicities each one equal to one.
\end{example}

\begin{remark}
Let $X_K$ be a smooth, quasi-compact and connected rigid analytic $K$-variety.
One can define directly the microlocalization $K$-algebras $\F_{\infty , r}(Y_i)$ of $\wideparen{\D}_{X_K}(Y_i)$ by choosing an admissible open cover $(Y_i)$of $X_K$ ; the integer $r$ for which this is possible depends on this cover.
In particular, for any coadmissible $\wideparen{\D}_{X_K}$-module $\M$, we can define its infinite support $\suppi(\M)$ by fixing such a cover.
Since this infinite support does not depend on $r$, $\suppi(\M)$ will be independent on the choice of the cover.
Thus, we can define in this setting a notion of sub-holonomicity satisfying the desire properties even when we do not have a smooth formal model.
\end{remark}

\paragraph{General situation}

We now assume that $\X$ is a smooth, separated and connected curve which is potentially not quasi-compact.
We denote by $T^*\X_K$ the cotangent space of $\X_K$ and we identify $\X_K$ with its zero section.

\begin{definition}
Let $\M$ be a coadmissible $\Dcap$-module and $(Y_i)$ an admissible quasi-compact open covering of $\X_K$.
The infinite support of $\M$ is defined by $\suppi(\M) := \bigcup \supp(\M_{|Y_i})$.
\end{definition}

This subset of $T^* \X_K$ does not depend on the choice of the admissible covering.
In particular, the fact that all the coadmissible $\wideparen{\D}_{Y_i}$-modules $\M_{|Y_i}$ are sub-holonomic is independent from the choice of the covering.
Moreover, this is closed being locally closed by definition.

\begin{definition}
A coadmissible $\Dcap$-module $\M$ is said to be sub-holonomic if there exists an admissible quasi-compact open covering $(Y_i)$ of $\X_K$ such that all the $\M_{|Y_i}$ are sub-holonomic.
\end{definition}

Let $\M$ be a sub-holonomic $\Dcap$-module. Then $\M_{|Y}$ is sub-holonomic in the sense of the preceding paragraph for any admissible quasi-compact open subset $Y$ of $\X_K$.
As a consequence of proposition \ref{prop5.4}, we deduce that any sub-holonomic $\Dcap$-module $\M$ is generically an integrable connection with finite rank $m_0(\M)$ given by the rank $m_0(\M_{|Y})$ for any admissible quasi-compact open subset $Y$ of $\X_K$.
Let $z \in \suppi(\M)$ and $Y$ be an admissible quasi-compact open neighbourhood of $z$ in $\X_K$. The multiplicity $m_z(\M)$ of $\M$ at $z$ is simply given by the multiplicity $m_z(\M_{|Y})$ as defined before.

\begin{definition}
We define the characteristic variety $\Char(\M)$ of a sub-holonomic $\Dcap$-module $\M$ to be the subset of $T^*\X_K$ consisting of an horizontal component $\X_K$ of multiplicity $m_0(\M)$ and of vertical components $(x = z)$ which are vertical lines with $x$-axis the points $z$ of $\suppi(\M)$ with multiplicities $m_z(\M)$.
The corresponding characteristic cycle is the formal sum
\[ \CC(\M) := m_0(\M) \cdot \X_K + \sum_{z \in \suppi(\M)} m_z(\M) \cdot (x = z) .\]
For a non sub-holonomic $\Dcap$-module, we set $\Char(\M) := T^*\X_K$.
\end{definition}

Let us point out the fact that this characteristic cycle may be infinite ; but this is finite over admissible quasi-compact open subsets of $\X_K$. 
Next proposition is a consequence of the equivalence of categories between sub-holonomic $\Dcap$-modules and sub-holonomic $\Dzr$-modules together with proposition \ref{prop5.3}.

\begin{prop}
Let $0 \to \M \to \Nn \to \L \to 0$ be an exact sequence of coadmissible $\Dcap$-modules.
Then $\Char(\Nn) = \Char(\M) \cup \Char(\L)$ and $\suppi(\Nn) = \suppi(\M) \cup \suppi(\L)$.
In particular, $\Nn$ is sub-holonomic if and only if $\M$ and $\L$ are. In this case, we have $\CC(\Nn) = \CC(\M) + \CC(\L)$.
\end{prop}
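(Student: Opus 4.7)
The plan is to reduce the general (possibly non-quasi-compact) situation to the quasi-compact case already treated in Proposition \ref{prop5.3}, by working on an admissible quasi-compact open covering of $\X_K$.

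First, I would choose an admissible covering $(Y_i)$ of $\X_K$ by quasi-compact open subsets. Restricting the exact sequence to each $Y_i$ yields a sequence $0 \to \M_{|Y_i} \to \Nn_{|Y_i} \to \L_{|Y_i} \to 0$ of coadmissible $\wideparen{\D}_{Y_i}$-modules. Exactness of restriction for coadmissible $\Dcap$-modules can be checked through the equivalence of categories with coadmissible $\Dzr$-modules from proposition \ref{prop5.1} (reducing further via proposition \ref{prop5.3} and the exactness of $(\sp)_*$ from \cite[proposition 3.2.5]{huyghe}). This gives genuine short exact sequences of sub-holonomic (or not) modules over each $Y_i$.

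Next, I would apply proposition \ref{prop5.3} to each $Y_i$: we obtain $\suppi(\Nn_{|Y_i}) = \suppi(\M_{|Y_i}) \cup \suppi(\L_{|Y_i})$, and $\Nn_{|Y_i}$ is sub-holonomic if and only if $\M_{|Y_i}$ and $\L_{|Y_i}$ are, with $m_z(\Nn_{|Y_i}) = m_z(\M_{|Y_i}) + m_z(\L_{|Y_i})$ for every $z$ in the infinite support. Taking the union of these local statements over $i$ immediately yields $\suppi(\Nn) = \suppi(\M) \cup \suppi(\L)$, since by construction the infinite support is the union $\bigcup_i \suppi((\cdot)_{|Y_i})$ and is independent of the covering. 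Similarly, the equivalence of sub-holonomicity is a local property, so it glues.

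For the horizontal multiplicity, I would use proposition \ref{prop5.4} applied on each $Y_i$: each $\M_{|Y_i}$ is generically a locally free $\O_{Y_i}$-module of rank $m_0(\M_{|Y_i}) = m_0(\M)$, and the additivity of ranks of locally free sheaves in short exact sequences of integrable connections, when restricted to the dense open complement of $\suppi(\Nn) \cap Y_i$, gives $m_0(\Nn) = m_0(\M) + m_0(\L)$. Combining this with the vertical multiplicity additivity, we obtain $\CC(\Nn) = \CC(\M) + \CC(\L)$, and $\Char(\Nn) = \Char(\M) \cup \Char(\L)$ follows because both sides decompose into the horizontal component $\X_K$ (appearing iff the horizontal multiplicity is positive) and the vertical lines above $\suppi$.

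The only subtle point I would expect is verifying that restriction to an admissible open subset preserves exactness of coadmissible modules and preserves the infinite support and multiplicities in a compatible way; this is where the chain of equivalences between $\mathcal{C}_{\X_K}$, $\mathcal{C}_{\zr}$ and $\mathcal{C}_{\X'}$ is essential. Once this compatibility is in place, everything reduces to already established additivity statements.
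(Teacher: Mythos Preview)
Your proposal is correct and follows essentially the same approach as the paper: the paper simply states that the proposition is a consequence of the equivalence of categories between sub-holonomic $\Dcap$-modules and sub-holonomic $\Dzr$-modules together with Proposition~\ref{prop5.3}, and your argument spells out precisely this reduction by restricting to a quasi-compact admissible covering and invoking Proposition~\ref{prop5.3} on each piece. The additional details you provide (exactness of restriction via the equivalences, the handling of $m_0$ through Proposition~\ref{prop5.4}) are the natural elaborations of the paper's one-line justification.
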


As a consequence, we deduce that the category $\mathcal{SH}_{\Dcap}$ of sub-holonomic $\Dcap$-module is abelian.
Thanks to the isomorphism \ref{eqext} and proposition \ref{propwh}, we get that any sub-holonomic $\Dcap$-module is weakly holonomic in the sense of \cite{ABW2}.
In particular, $\mathcal{SH}_{\Dcap}$ defines a full sub-category of the one of weakly holonomic $\Dcap$-modules.

\begin{remark}\,
\begin{enumerate}
\item
Let us mention that sub-holonomic $\Dcap$-modules are not stable by direct image.
We recall the following example of \cite{bitoun}.
Let $\X_K = \mathrm{Sp}(K \langle x \rangle)$, $U_K = \X_K \backslash \{0\}$ the open subset obtained by removing the origin and $j : U_K \hookrightarrow \X_K$ the natural embedding.
Note $\partial = \frac{d}{dx}$ and consider the differential operator $P_\lambda = x\cdot\partial - \lambda \in \Dcap(\X_K)$ with $\lambda \in K$.
The $\wideparen{\D}_{U_K}$-module $\M_\lambda = \wideparen{\D}_{U_K} / P_\lambda \simeq \O_{U_K} \cdot x^\lambda$ is coadmissible.
Indeed, this is an integrable connection since $x$ is invertible on $U$.
But \cite[theorem 1.1]{bitoun} tells us that the $\wideparen{\D}_{\X_K}$-module $j_*\M_{U_K}$ is not coadmissible when $\lambda$ is not of positive type.
\item
The category of sub-holonomic $\Dcap$-modules should contain Bode's holonomic modules introduced in \cite{bodehol} in the case of dimension one.
\end{enumerate}
\end{remark}

A coadmissible $\Dcap$-module $\M$ is zero if and only if $\Char(\M) = 0$.
In particular, a sub-holonomic $\Dcap$-module is zero if and only if its characteristic cycle is null.
The following corollary is then a consequence of the fact that on any admissible quasi-compact open subset $Y$ of $\X_K$, the characteristic cycle $\CC(\M_{|Y})$ is finite.

\begin{cor}
Let $\M$ be a sub-holonomic $\Dcap$-module and $Y$ an admissible quasi-compact open subset  of $\X_K$.
Then the module $\M_{|Y}$ has finite length less or equal to $\ell(\CC(\M_{|Y})) = m_0(\M) + \sum_{z \in Y \cap \suppi(\M)} m_z(\M) \in \N$.
\end{cor}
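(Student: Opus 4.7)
The plan is to reduce the statement to the quasi-compact situation already treated and then run a standard additivity/well-ordering argument on characteristic cycles.

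First, I would replace $\M$ by $\M_{|Y}$, noting that by the very definition of sub-holonomicity, the restriction $\M_{|Y}$ is a sub-holonomic $\wideparen{\D}_{Y}$-module with $\suppi(\M_{|Y}) = Y \cap \suppi(\M)$. In the quasi-compact case (the context of proposition \ref{prop5.4} and the preceding discussion), the infinite support is a \emph{finite} set of closed points, each carrying a finite multiplicity $m_z(\M_{|Y})=m_z(\M)$, and the horizontal multiplicity is the finite rank $m_0(\M)$ of the generic integrable connection. Hence $\ell(\CC(\M_{|Y})) = m_0(\M) + \sum_{z \in Y \cap \suppi(\M)} m_z(\M)$ is a finite non-negative integer, so the statement makes sense.

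Next, I would use the two key inputs from just above: additivity of characteristic cycles on short exact sequences of sub-holonomic $\Dcap$-modules, and the fact that a sub-holonomic module is zero if and only if its characteristic cycle is zero. Together, these say that $\ell(\CC(\cdot))$ is a well-defined additive function on $\mathcal{SH}_{Y}$ which is strictly positive on non-zero objects. Given any strictly increasing chain of sub-$\wideparen{\D}_{Y}$-modules
\begin{equation*}
0 = \M_0 \subsetneq \M_1 \subsetneq \cdots \subsetneq \M_n = \M_{|Y},
\end{equation*}
each quotient $\M_i / \M_{i-1}$ is a non-zero sub-holonomic $\wideparen{\D}_{Y}$-module (sub-holonomicity is preserved by the abelian structure, by the previous proposition), so $\ell(\CC(\M_i / \M_{i-1})) \geq 1$, while additivity yields
\begin{equation*}
\ell(\CC(\M_{|Y})) = \sum_{i=1}^n \ell(\CC(\M_i / \M_{i-1})) \geq n.
\end{equation*}
Therefore $n \leq \ell(\CC(\M_{|Y}))$, which bounds the length.

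The only non-routine point is justifying that every subquotient in such a chain is itself sub-holonomic, so that the additive function $\ell(\CC(\cdot))$ actually applies. This follows from the previous proposition: in a short exact sequence $0 \to \M' \to \M_{|Y} \to \M'' \to 0$, sub-holonomicity of $\M_{|Y}$ forces sub-holonomicity of both $\M'$ and $\M''$ (their supports are contained in $\suppi(\M_{|Y})$ and their characteristic cycles sum to $\CC(\M_{|Y})$). Iterating gives sub-holonomicity of every successive quotient and every $\M_i$ in the chain, closing the argument.
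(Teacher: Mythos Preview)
Your proof is correct and follows essentially the same approach as the paper. The paper does not spell out the chain argument but states the corollary as a consequence of additivity of the characteristic cycle on short exact sequences together with the fact that a sub-holonomic module vanishes if and only if its characteristic cycle does; you have simply made this standard length-bounding argument explicit.
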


\begin{example}\,
\begin{enumerate}
\item
Let $\X$ be a smooth, separated and connected curve.
We consider a non-empty open subset $U$ of $\X_K$ and $\M$ an integrable connection on $\X_K$.
We denote by $j : U \hookrightarrow \X_K$ the natural embedding.
Then $j_* (\M_{|U})$ is a coadmissible $\Dcap$-modules by \cite[section 10.5]{ABW2}.
Since this module is generically an integrable connection, this is sub-holonomic.
More generally, the coadmissible $\Dcap$-modules $R^i j_* (\M_{|U})$ are also sub-holonomic.
\item
Let $\Arig$ be the rigid analytic line over $K$. For any $r \in |K^*|^\Q$, we denote by $\mathbb{B}(r) = \mathrm{Sp}(\O(r))$ the closed rigid analytic ball of radius $r$ with
\[ \O(r) = \left\{ \sum_{\ell \in \N} \alpha_\ell \cdot t^\ell, \, \alpha_\ell \in K :  |\alpha_\ell| \cdot r^\ell \to 0 \right\} .\]
We have on $\mathbb{B}(r)$ the Fr\'echet-Stein $K$-algebra of rapidly converging differential operators
\[ \wideparen{\D}(r) := \wideparen{\D}_{\Arig}(\mathbb{B}(r)) = \left\{ \sum_{n \in \N} a_n \cdot \partial^n : \, a_n \in \O(r), \, \forall R > 0, \, R^n \cdot |a_n| \to 0 \right\} = \varprojlim_k \D_k(r)\]
where $\D_k(r) := \left\{ \sum_{n \in \N} a_n \cdot (\varpi^k\partial)^n, : \, a_n \in \O(r), \, |a_n| \to 0 \right\}$ is a Banach $K$-algebra.
We now fix an increasing sequence of elements $(r_m)_m$ of $|K^*|^\Q$.
In this case, $\bigcup_m \mathbb{B}(r_m)$ is an admissible cover of $\Arig$.
We have $\O(\Arig) = \bigcap_m \O(r_m)$ and $\wideparen{\D}_{\Arig}(\Arig) = \bigcap_m \wideparen{\D}(r_m)$.
Moreover, we easily check, using properties of the coadmissible $\wideparen{\D}$-modules of \cite{ardakov}, that a $\widehat{\D}_{\Arig}$-module $\M$ is coadmissible if and only if for all $m \in \N$, $\M(r_m) := \M(\wideparen{\D}(r_m))$ is a coadmissible $\wideparen{\D}(r_m)$-module and for any $m' \geq m$,
\[ \M(r_m) \simeq \wideparen{\D}(r_m) \wideparen{\otimes}_{\widehat{\D}(r_{m'})} \M(r_{m'}) .\]
For $m' \geq m$, we have $\suppi(\M(r_m)) \subset \suppi(\M(r_{m'}))$.
Then $\suppi(\M) := \bigcup_m \suppi(\M(r_m))$ is an increasing union.
By definition, a coadmissible $\wideparen{\D}_{\Arig}$-module $\M$ is sub-holonomic if all the coadmissible modules $\M(r_m)$ are sub-holonomic, ie if the infinite supports $\suppi(\M(r_m))$ are all finite.
Moreover, if $\M(r_m)$ is sub-holonomic, so are the modules $\M(r_n)$ for any $n \leq m$.

\item
Let us detail one example coming from \cite[section 6.4]{patel}.
From now on, $K$ is a finite extension of $\Q_p$, $\Lambda = \V^{\oplus 2}$ a lattice of $K^{\oplus 2}$ and $\X$ the formal completion of the smooth model $\mathbb{X} = \mathbb{P}\text{roj}(\Lambda)$ of $\mathbb{P}_K^1 = \mathbb{P}\text{roj}(K^{\oplus 2})$ associated to $\Lambda$.
We denote by $\X_n$ the formal completion of the semi-stable model $\mathbb{X}_n$ of $\mathbb{P}_\V^1$ defined in \cite[Section 2.1]{patel} : we get $\mathbb{X}_1$ by blowing up $\mathbb{X}$ along the $\mathbb{F}_q$-rational points of its special fiber and $\mathbb{X}_{n+1}$ is obtained by blowing up the
smooth $\mathbb{F}_q$-rational points of the special fiber of $\mathbb{X}_n$.
Let us note $\X_n^\circ$ the open subscheme of $\X_n $ obtained by removing its smooth $\mathbb{F}_q$-rational points $\X_n^{\text{sm}}(\mathbb{F}_q)$.
The inductive limit $\widehat{\Sigma}_{0 , K} := \varinjlim_n \X_n^\circ$ is then a formal model for the $p$-adic upper half plane $\Sigma_{0 , K} := \mathbb{P}_K^{1 , \text{rig}} \backslash \mathbb{P}_K^1(K)$ over $K$.
The group $G_0 := \text{GL}(2,\V)$ acts on the formal scheme $\X_n$ and there is an induced diagonal left action of $G_0$ on the sheaves $\widehat{\D}^{(m)}_{\X_n , k , \Q}$ and $\widetilde{\D}^\dagger_{n , \Q} := \varinjlim_m \widehat{\D}^{(m)}_{\X_n , n , \Q}$.
We consider the overconvergent isocrystal $\mathcal{L}\text{oc}_n^\dagger(\O(\Sigma_{0 , K})_n)$ on $\X_n$ given by the functions which are regular on $\X_n^\circ$ and have overconvergent singularities along the closed subset $\X_n^{\text{sm}}(\mathbb{F}_q)$.
By \cite[remark 6.4.5]{patel}, this sheaf is a $G_0$-equivariant coherent $\widetilde{\D}^\dagger_{n , \Q}$-module.
Moreover, we dispose of the following description given in \cite[corollary 6.4.4]{patel}.
There exists a short exact sequence $0 \to \O_{\X_n , \Q} \to (\mathcal{L}\text{oc}_n^\dagger(\O(\Sigma_{0 , K})_n)) \to \M \to 0$ of $G_0$-equivariant $\widetilde{\D}^\dagger_{n , \Q}$-modules, where $\M$ is a skyscraper sheaf supported at the smooth $\mathbb{F}_q$-rational points $\X_n^{\text{sm}}(\mathbb{F}_q)$ of $\X_n$.
Let $\M = \mathcal{L}\text{oc}^\dagger(\O(\Sigma_{0 , K}))$ be the associated coadmissible $\wideparen{\D}_{\mathbb{P}_K^{1 , \text{rig}}}$-module.
Thanks to \cite[proposition 3.1.10]{huyghe}, the module $\M$ can be written as a projective limit of coadmissible $\D_{\X_n , \infty}$-modules $\M_{\X_n} = \varprojlim_k \M_{\X_n , k}$.
The coherent $\widehat{\D}^{(0)}_{\X_n , k , \Q}$-module $\M_{\X_n , k}$ admits a short exact sequence similar to the preceding one for $k$ large enough.
Hence, this is holonomic and $\supp(\M_{\X_n , k , r}) = \X_n^{\text{sm}}(\mathbb{F}_q)$ for $k \geq r$ sufficiently large.
In particular, $\M_{\X_n}$ is sub-holomomic with $\suppi(\M_{\X_n}) = \X_n^{\text{sm}}(\mathbb{F}_q)$.
More precisely, its characteristic variety contains the zero section with horizontal multiplicity exactly one and vertical components with x-axis at $\X_n^{\text{sm}}(\mathbb{F}_q)$.
It follows that $\M= \mathcal{L}\text{oc}^\dagger(\O(\Sigma_{0 , K}))$ is sub-holonomic with $\suppi(\M) = \varprojlim_n \text{sp}_{\X_n}^{-1}(\X_n^{\text{sm}}(\mathbb{F}_q))$, where $\text{sp}_{\X_n} : \mathbb{P}_K^{1 , \text{rig}} \to \X_n$ is the specialization map.
Moreover, this is an integrable connection of rank one on the $p$-adic upper half plane $\Sigma_{0 , K}$.
\end{enumerate}
\end{example}

\bibliographystyle{plain}
\bibliography{biblio.bib}

\end{document}